\numberwithin{equation}{section}
\def\tag{\refstepcounter{equation}\leqno }
\newtheorem{lemma}{Lemma}[section]
\newtheorem{theorem}[lemma]{Theorem}
\newtheorem{definition}[lemma]{Definition}
\newtheorem{example}[lemma]{Example}
\newtheorem{remark}[lemma]{Remark}
\def\E{{\mathbb{E}}}
\def\P{{\mathbb{P}}}
\def\R{{\mathbb{R}}}
\def\N{{\mathbb{N}}}
\def\Z{{\mathbb{Z}}}
\newcommand{\wt}{\widetilde}
\def\tag{\refstepcounter{equation}\leqno }
\newcommand{\bfPi}{\mbox{\boldmath$\Pi$}}
\newcommand{\bfx}{{\bf x}}
\newcommand{\bfX}{{\bf X}}
\newcommand{\bfB}{{\bf B}}
\newcommand{\bali}{\begin{align}}
\newcommand{\eali}{\end{align}}
\newcommand{\bfY}{{\bf Y}}
\newcommand{\bfy}{{\bf y}}
\newcommand{\bfA}{{\bf A}}
\newcommand{\bfZ}{{\bf Z}}
\newcommand{\bfW}{{\bf W}}
\newcommand{\bfS}{{\bf S}}
\begin{document}
\today
\title[Tail indices for $AX+B$ recursion with triangular matrices]{Tail indices for $\bfA\bfX+\bfB$ recursion with triangular matrices}
\author[M. Matsui]{Muneya Matsui}
\author[W. \'{S}wi\k{a}tkowski]{Witold \'{S}wi\k{a}tkowski}
\address{Department of Business Administration, Nanzan University,
18 Yamazato-cho Showa-ku Nagoya, 466-8673, Japan}
\email{mmuneya@nanzan-u.ac.jp}
\address{Institute of Mathematics
University of Wroc{\l}aw
Pl.  Grunwaldzki 2/4,
50-384, Wroc{\l}aw, Poland}
\email{witswiat@math.uni.wroc.pl}
\begin{abstract}
We study multivariate stochastic recurrence equations (SREs) with triangular matrices.
If coefficient matrices of SREs have strictly positive entries, the
 classical Kesten result says that the stationary solution is regularly
 varying and the tail indices are the same in all directions. This
 framework, however, is too restrictive for applications. In order to
 widen applicability of SREs, we consider SREs with triangular matrices and we prove that their stationary solutions are
regularly varying with component-wise different tail exponents. 
Several applications to GARCH models are suggested.   
\end{abstract}

\vspace{2mm}
{\it Key words.}\ Stochastic recurrence equation, Kesten's theorem, regular variation, multivariate GARCH(1,1) processes, triangular matrices.

\subjclass[2010]{Primary 60G10, 60G70, Secondary 62M10, 60H25}

\thanks{The first author is partly supported by the JSPS Grant-in-Aid
for Young Scientists B (16k16023). The second author is partly supported by the NCN Grant UMO-2014/15/B/ST1/00060}
\maketitle

\section{Introduction}

\subsection{Results and motivation}

A multivariate stochastic recurrence equation (SRE) 
\begin{equation}
 \label{sre1}
 \bfW_t=\bfA_t\bfW_{t-1}+\bfB_t,\quad t\in\N
\end{equation}
is studied, where $\bfA_t$ is a random $d\times d$ matrix with nonnegative
entries, $\bfB_t$ is a random vector in $\R^d$ with nonnegative components and 
$(\bfA_t,\bfB_t)_{t\in\Z}$ are i.i.d.
The sequence $(\bfW_t)_{t\in\N}$ generated by iterations of SRE \eqref{sre1} is a Markov chain, however, it is not necessarily stationary. Under some mild contractivity and integrability  conditions (see e.g. \cite{bougerol:picard:1992a, brandt:1986}),
$\bfW_t$ converges in distribution to a random variable $\bfW$ 
which is the unique solution to the stochastic equation
\begin{equation}
 \label{dif recurrence}
 \bfW\stackrel{d}=\bfA\bfW+\bfB.
\end{equation}
Here $(\bfA,\bfB)$ denotes a generic element of the
sequence $(\bfA_t,\bfB_t)_{t\in\Z}$, 
which is independent of $\bfW$ and the equality is meant in distribution. If we put $\bfW_0\stackrel{d}=\bfW$ then the sequence $(\bfW_t)$ of \eqref{sre1}
is stationary. 
Moreover, under suitable conditions a strictly stationary
casual solution $(\bfW_t)$ to \eqref{sre1} can be written by the formula
\[
 \bfW_t=\bfB_t+\sum_{i=-\infty}^t \bfA_t\cdots\bfA_i\bfB_{i-1}
\]
and $\bfW_t\stackrel{d}=\bfW$ for all $t\in\Z$.

Stochastic iteration \eqref{sre1} has been already studied 
for almost half a century and it has found numerous applications to financial 
models (see e.g. Section 4 of \cite{buraczewski:damek:mikosch:2016}). 
Various properties have been investigated. Our particular interest here is 
the tail behavior of the stationary
solution $\bfW$. The topic is not only interesting on its own 
but it also has applications to e.g. risk management \cite[Sec. 7.3]{mcneil:fre:embrechts:2015}.

The condition for the stationary solution $\bfW$ having power decaying
tails dates back to Kesten \cite{kesten:1973}.  
Since then, the Kesten theorem and its extensions have been used 
to characterize tails in various situations. An essential feature of Kesten-type results
is that tail behavior is the same in all coordinates. We are going to call it {\it Kesten property.}
However, this property is not necessarily shared by all interesting models -- several empirical
evidences support the fact (see e.g. \cite{sun:chen:2014,horvath:boril:2016,matsui:mikosch:2016} from
economic data). Therefore, SREs with solutions having more flexible tails are both
challenging and desirable in view of applications.

The key assumption implying Kesten property 
is an irreducibility condition and it refers to the law of $\bfA$. The simplest example when it fails, is a SRE with diagonal matrices $\bfA=diag(A_{11},\ldots,A_{dd})$. In this case the solution 
can exhibit different tail behavior in coordinates, which is not
difficult to see by the univariate Kesten result. 
In the present paper, we consider a particular
case of triangular matrices $\bfA$, which is much more complicated and much more applicable as well. We derive the precise tail asymptotics of solution in all coordinates. In particular we show that it may vary in coordinates, though it is also possible that in some coordinates we have the same tail asymptotics.

More precisely let $\bfA$ be non-negative matrices such that
$A_{ii}>0,\,A_{ij}=0,\,i> j\ a.s.$ Suppose that 
$\E A_{ii}^{\alpha_i}=1$ holds for $\alpha_i>0$ in each $i$ such that $\alpha_i\neq
\alpha_j,\,i\neq j$. 
We prove that when $x\to \infty$ 
\begin{equation}
\label{assymp}
\P (W_i >x)\sim c_i x^{-\wt\alpha_i}, \quad c_i>0, \quad
i=1,\ldots,d,  
\end{equation}
for $\wt\alpha_i>0$ depending on $\alpha_i,\ldots,\alpha_d$.
Here and in what follows the notation `$\sim$' means
that the quotient of the left and right hand sides tends to 1 as $x\to\infty$.
For $d=2$, the result \eqref{assymp} was proved in
\cite{damek:matsui:swiatkowski:2017} with indices $\wt\alpha_1= \min (\alpha _1, \alpha
_2)$ and $\wt \alpha_2=\alpha_2$.
The dependency of $\wt \alpha_1$ on $\alpha_1,\alpha_2$ comes from the
SRE: $ W_{1,t}
= A_{11,t}W_{1,t-1}+A_{12,t}W_{2,t-1}+B_{1,t}$ where  $W_{1,t}$ is influenced by $W_{2,t}$\footnote{In case 
$\alpha_1\neq\alpha_2$, possibility of
different tail indices was already suggested in Matsui and Mikosch
\cite{matsui:mikosch:2016}.}. The same dependency holds in any dimension
since $W_i$ depends on $W_{i+1},\ldots,W_d$ in a similar but more
complicated manner. 
In order to prove \eqref{assymp} we clarify this dependency with new
notions and apply an induction scheme effectively.
 
The structure of the paper is as follows. 
In Section \ref{sec:pre_statio}, we state preliminary
assumptions and we show existence of stationary solutions to
SRE \eqref{sre1}.
Section \ref{mainthproof} contains the main theorem together with its proof. 
The proof follows by induction and it
requires several preliminary results.
They are described
in Sections \ref{sec:case1} and \ref{sec:case2}.
 Applications to GARCH
models are suggested in Section
\ref{application}. Section \ref{conclusions} contains discussion about constants of tails and open problems related to the subject of the paper.

\subsection{Kesten's condition and previous results}
We state Kesten's result and briefly review
the literature. 
We prepare a function 
\[
h(s)=\inf_{n\in \N }(\E \| \bfPi_n \| ^s)^{1\slash n} \quad \text{with}\quad 
\bfPi_n = \bfA _1\cdots \bfA_n,
\]
where $\|\cdot\|$ is a matrix norm. 
The tail behavior of $\bfW$ is determined by $h$. We assume that there exists a unique $\alpha>0$ such that $h(\alpha)=1$.
The crucial assumption of Kesten is irreducibility. It can be described in several ways, neither of which seems simple and intuitive. We are going to state a weaker property, which is much easier to understand.
For any matrix $\bfY$, we write $\bfY>0$ if all entries are positive. The irreducibility assumption yields that for some $n\in\N$
\begin{align}\label{crutial}
\P(\bfPi_n >0)>0. 
\end{align}
Then the tail of $\bfW$ is essentially heavier than that of 
$\| \bfA \|$: it decays polynomially even if $\| \bfA \|$ is bounded. 
Indeed, there exists a function $e_{\alpha}$ on the unit sphere $\mathbb{S}^{d-1}$ such that 
\begin{equation} 
\label{rege}
\lim _{x\to \infty}x^{\alpha}\P (\bfy'\bfW>x)=e_{\alpha}(\bfy), \quad
\bfy \in \mathbb{S}^{d-1}
\end{equation}
and $e_{\alpha}(\bfy) > 0$ for $ y\in \mathbb{S}^{d-1}\cap
[0,\infty)^d$, where $\bfy'\bfW=\sum_{j=1}^d y_j W_j$ denotes the inner product.
If $\alpha \not\in \N$ then \eqref{rege} implies 
multivariate regular variation of $\bfW$, while if $\alpha\in \N$, the same holds under
some additional conditions (see
\cite[Appendix C]{buraczewski:damek:mikosch:2016}).
Here we say that a $d$-dimensional r.v. $\bfX$ is {\it multivariate regularly varying with index $\alpha$} if
 \begin{align}
 \label{eq:mrv1}
  \frac{\P(|\bfX|>ux,\,\bfX/|\bfX|\in \cdot)}{\P(|\bfX|>x)}
  \stackrel{v}{\to} u^{-\alpha}\P({\bf \Theta} \in \cdot),\quad u>0,
 \end{align}
 where $\stackrel{v}{\to}$ denotes vague convergence and $\bf\Theta$ is a
 random vector on the
 unit sphere $\mathbb{S}^{d-1}=\{\bfx\in \R^d:|\bfx| =1 \}$. This is a 
 common tool for describing multivariate power-tails, see \cite{bingham:goldie:teugels:1987,resnick:1987,resnick:2007} or 
\cite[p.279]{buraczewski:damek:mikosch:2016}. 

We proceed to the literature after Kesten. Later on an analogous result
was proved by Alsmeyer and Mentemeier \cite{alsmeyer:mentmeier2012} for invertible
matrices $\bfA$ with some irreducibility and density conditions (see also \cite{kluppelberg:pergamentchikov:2004}).
The density assumption was removed by Guivarc'h and Le Page
\cite{guivarch:lepage:2015} who developed the most general approach to
\eqref{sre1} with signed $\bfA$ having possibly a singular
law. Moreover, their conclusion was stronger, namely they obtained
existence of a measure $\mu $ on $\R ^d $ being the weak limit of 
\begin{equation}
\label{regular}
 x^{\alpha}\P (x^{-1}\bfW\in \cdot ) \quad \mbox{when}\ x\to \infty, 
\end{equation}
which is equivalent to \eqref{eq:mrv1}.
As a related work, in \cite{buraczewski:damek:guivarch2009} the existence of the limit \eqref{regular} was proved 
under the assumption that $\bfA$ is a similarity\footnote{$A$ is a similarity if for every $x\in \R ^d$, $|Ax|=\| A\|\
|x|$.}.
See \cite{buraczewski:damek:mikosch:2016} for the details of Kesten's
result and other related results.

For all the matrices $\bfA$ considered above, we have the same
tail behavior in all directions, one of the reasons being a certain
irreducibility or homogeneity of the operations generated by the support of the law of $\bfA$. This is not the case for triangular
matrices, since \eqref{crutial} does not hold then. Research in such directions would be a next natural
step\footnote{see also \cite{buraczewski:damek:2010}, 
\cite{buraczewski:damek:guivarch2009} and
\cite[Appendix D]{buraczewski:damek:mikosch:2016} for diagonal matrices}.
In this paper we work out the case when the indices
$\alpha_1,\ldots\alpha_d>0$ satisfying $\E A_{ii}^{\alpha_i}=1$ are all different. 
However, there remain not a few problems being settled, e.g. what happens when they are not
necessarily different. This is not clear even in the case of $2\times 2$
matrices. A particular case $A_{11}=A_{22}>0$ and $A_{12} \in \R$ was
studied in \cite{damek:zienkiewicz:2017} where the result is 
\[
\P (W_1>x)\sim
\begin{cases}
Cx^{-\alpha_1}(\log x)^{\alpha_1}\quad&{\rm if}\quad \E A_{12}A_{11}^{\alpha_1-1}\neq 0,\\
Cx^{-\alpha_1}(\log x)^{\alpha_1/2}\quad&{\rm if}\quad \E A_{12}A_{11}^{\alpha_1-1}=0,
\end{cases}
\]
where $C>0$ is a constant which may be different line by line.
Our conjecture in $d\, (>1)$ dimensional case is that 
\[
\P (W_i>x)\sim Cx^{-\wt\alpha_i}\ell_i(x),\quad i=1,\ldots,d 
\]
for some slowly varying functions $\ell_i$, and to get optimal $\ell_i$'s
 would be a real future challenge.

\section{Preliminaries and Stationarity}
\label{sec:pre_statio}
We consider $d\times d$ random matrices $\bfA=[A_{ij}]_{i,j=1}^d$ and
$d$-dimensional random vectors $\bfB$ that satisfy the set of assumptions:\\
\begin{itembox}[l]{{\bf Condition} (T)} 
(T-1)\ $\P(\bfA\ge 0)=\P(\bfB\ge 0)=1$, \\
(T-2)\ $\P(B_i=0)<1$ for $i=1,\ldots,d$, \\
(T-3)\ $\bfA$ is upper triangular, i.e. $\P(A_{ij}=0)=1$ whenever $i>j$,\\
(T-4)\ There exist $\alpha_1,\ldots,\alpha_d$ such that $\E A_{ii}^{\alpha_i}=1$ for $i=1,\ldots,d$ and $\alpha_i\not=\alpha_j$ if $i\not=j$,\\
(T-5)\ $\E A_{ij}^{\alpha_i}<\infty$ for any $i,j\le d$,\\
(T-6)\ $\E B_i^{\alpha_i}<\infty$ for $i=1,\ldots,d$,\\
(T-7)\ $\E [A_{ii}^{\alpha_i}\log^+ A_{ii}]<\infty$ for $i=1,\ldots,d$,\\
(T-8)\ The law of $\log A_{ii}$ conditioned on $\{A_{ii}>0\}$ is
 non-arithmetic for $i=1,\ldots,d$. 
\end{itembox}
\vspace{1mm}

\noindent
Note that (T-1) and (T-4) imply that 
\begin{equation}
\label{positive}
\P(A_{ii}>0)>0
\end{equation}
for $i=1,\ldots,d$. For further convenience any r.v. satisfying the inequality \eqref{positive} will be called {\it positive}. 
Most conditions are similar to those needed for applying Kesten-Goldie's
result (see \cite{buraczewski:damek:mikosch:2016}).

Let $(\bfA_t,\bfB_t)_{t\in\Z}$ be
an i.i.d. sequence with the generic element $(\bfA,\bfB)$. We define the products
\[
\bfPi_{t,s}=
\begin{cases}
\bfA_t\cdots\bfA_s,\quad &t\ge s,\\
{\bf I}_d,&t<s,
\end{cases}
\]
where ${\bf I}_d$ denotes the identity $d\times d$ matrix. In the case $t=0$ we are going to use a simplified notation
\[
\bfPi_n:=\bfPi_{0,-n+1},\quad n\in\N,
\]
so that $\bfPi_0={\bf I}_d$. For any $t\in\Z$ and $n\in\N$, the products $\bfPi_{t,t-n+1}$ and $\bfPi_n$ have the same distribution.

Let
$\Vert\cdot\Vert$ be the operator norm of the matrix: $\Vert{\bf
M}\Vert=\sup_{|{\bf x}|=1}|{\bf Mx}|$, where $|\cdot|$ is the Euclidean
norm of a vector. 
The top Lyapunov exponent associated with $\bfA$ is defined by 
\[
\gamma_{\bfA}=\inf_{n\ge 1}\frac{1}{n}\E[\log\Vert\bfPi_n\Vert].
\]
Notice that in the univariate case $\bfA=A\in\R$ and $\gamma _{\bfA}=\E [\log |A|]$.

If $\gamma _{\bfA}<0$  
then the equation
\begin{equation}
\label{affine}
\bfW\stackrel{d}=\bfA\bfW+\bfB
\end{equation}
has a unique solution $\bfW$ which is independent of $(\bfA,\bfB)$. Equivalently, the stochastic recurrence equation
\begin{equation}
\label{sre}
\bfW_t=\bfA_t\bfW_{t-1}+\bfB_t
\end{equation}
has a unique stationary solution and $\bfW_t\stackrel{d}=\bfW$. Then we can write the stationary solution as a series
\begin{equation}
\label{series}
\bfW_t=\sum_{n=0}^\infty \bfPi_{t,t+1-n}\bfB_{t-n}.
\end{equation}
Indeed, it is easily checked that the process $(\bfW_t)_{t=0}^\infty$ defined by \eqref{series} is stationary and solves \eqref{sre}, if the series on the right hand side of \eqref{series} is convergent for any $t\in\N$. The convergence is ensured if the top Lyapunov exponent is negative (for the proof see \cite{bougerol:picard:1992a}).
Negativity of $\gamma_\bfA$ follows from condition
(T). We provide a proof in Appendix \ref{sec:lyapunov}.

From now on, $\bfW=(W_1,\ldots,W_d)$ will always denote the solution to \eqref{affine} and by $\bfW_t=(W_{1,t},\ldots,W_{d,t})$ we mean the stationary solution to \eqref{sre}. Since they are equal in distribution, when we consider distributional properties, we sometimes switch between the two notations, but this causes no confusion.

\section{The main result and the proof of the main part}
\label{mainthproof}
\subsection{The main result}
We are going to determine the component-wise
tail indices of the solution $\bfW$
to stochastic equation \eqref{affine}.
Since $\bfA $ is upper triangular, the tail behavior of $W_{i}$ is affected only by $W_{j}$ for $j>i$, but not necessarily all of them: we allow some entries of $\bfA$ above the diagonal to vanish $a.s.$ In the extreme case of a diagonal matrix the tail of any coordinate may be determined independently of each other. On the other hand, if $\bfA$ has no zeros
above the diagonal, every coordinate is affected by all the subsequent ones. In order to describe this phenomenon precisely, we define a partial order relation on the set of coordinates. It clarifies
interactions between them.
\begin{definition}
\label{direct_dep}
For $i,j\in\{1,\ldots,d\}$ we say that $i$ directly depends on $j$ and
 write $i\preccurlyeq j$ if $A_{ij}$ is positive (in the sense of \eqref{positive}). We further write $i\prec j$ if $i\preccurlyeq j$ and $i\neq
j$.
\end{definition}
Observe that $i\preccurlyeq j$ implies $i\le j$ since $\bfA$ is upper
triangular, while $i\preccurlyeq i$ follows from the positivity of
diagonal entries. 
We extract each component of SRE \eqref{sre} and may write 
\begin{equation}
\label{coordinateSRE}
W_{i,t}=\sum_{j=1}^d A_{ij,t}W_{j,t-1}+B_{i,t}=\sum_{j:j\succcurlyeq
i}A_{ij,t}W_{j,t-1}+B_{i,t},
\end{equation}
where in the latter sum all coefficients $A_{ij,t}$ are positive. From this, we obtain the component-wise SRE 
in the form
\begin{equation}
\label{coordinate}
W_{i,t}=A_{ii,t}W_{i,t-1}+\sum_{j:j\succ i}A_{ij,t}W_{j,t-1}+B_{i,t}=A_{ii,t}W_{i,t-1}+D_{i,t},
\end{equation}
where
\begin{equation}
\label{D-terms}
D_{i,t}=\sum_{j:j\succ i}A_{ij,t}W_{j,t-1}+B_{i,t}. 
\end{equation}
Therefore, the tail of $W_{i,t}$
is determined by the comparison of the autoregressive behavior, characterized by the index $\alpha_i$, and the
tail behavior of $D_{i,t}$, which depends on the indices
$\alpha_{j},\,j\succ i$. To clarify this, we define recursively new exponents $\wt{\alpha}_i$, where $i$ decreases from $d$ to 1:
\begin{equation}
\label{newindices}
\wt\alpha_i=\alpha_i\land\min\{\wt\alpha_{j}: i\prec j\}.
\end{equation}
If there is no $j$ such that $i\prec j$, then we set $\wt\alpha_i=\alpha_i$. In particular, $\wt\alpha_d=\alpha_d$.
Notice that in general $\wt \alpha_i \neq \min\{\alpha_j: i\le j\}$. Depending on zeros of $\bfA$, two 
relations $\wt
\alpha_i = \min\{\alpha_j: i\le j\}$ and 
$\wt \alpha_i > \min\{\alpha_j: i\le j\}$
are possible for any $i<d$, see Example \ref{ex1}. 

For further convenience we introduce also a modified, transitive version of relation
$\preccurlyeq $. 
\begin{definition}
\label{indirect_dep}
We say that $i$ depends on $j$ and write $i\trianglelefteq j$ if there
 exists $m\in\N$ and a sequence $(i(k))_{0\le k\le m}$ such that $i=i(0)\le \cdots\le
 i(m)=j$ and $A_{i(k)i(k+1)}$ is positive for $k=0,\ldots,m-1$. We write $i\vartriangleleft j$ if $i\trianglelefteq j$ and $i\neq j$.
\end{definition}
Equivalently, the condition on the sequence $(i(k))_{0\le k\le m}$ can be presented in the form
$i\preccurlyeq i(1)\preccurlyeq\cdots\preccurlyeq i(m)=j$.
In particular, $i\preccurlyeq j$ implies $i\trianglelefteq j$. Now we can write
\begin{equation}
\label{triangle}
\wt\alpha_i=\min\{\alpha_j:i\trianglelefteq j \}
\end{equation}
which is equivalent to \eqref{newindices}, though has a more convenient form.

Although definitions \ref{direct_dep} and \ref{indirect_dep} are quite similar, there is a significant difference between them.
To illustrate the difference, we introduce the following notation for the entries of $\bfPi_{t,s}$:
\[
(\bfPi_{t,s})_{ij}=:\pi_{ij}(t,s).
\]
If $t=0$, we use the simplified form
\[
\pi_{ij}(n):=\pi_{ij}(0,-n+1),\quad n\in\N. 
\]
By $i\preccurlyeq j$ we mean that the entry $A_{ij}$ of the matrix $\bfA$ is positive, while
$i\trianglelefteq j$ means that for some $n\in\N$ the corresponding entry $\pi_{ij}(n)$ of the matrix $\bfPi_{n}$ is positive.
The former relation gives a stronger condition. On the other hand, the
latter is more convenient when products of matrices are considered,
especially when transitivity plays a role. Example \ref{ex1} gives a deeper insight into the difference between the two relations.
Throughout the paper both of them are exploited.

Now we are ready to formulate the main theorem.
\begin{theorem}
\label{main}
Suppose that condition {\rm(T)} is satisfied for a random matrix $\bfA$. Let $\bfW$ be the solution to \eqref{affine}. Then there exist strictly positive constants $C_1,\ldots,C_d$ such that
\begin{equation}
\label{mainthm}
\lim\limits_{x\to\infty}x^{\wt\alpha_i}\P(W_i>x)=C_i,\quad i=1,\ldots,d.
\end{equation}
\end{theorem}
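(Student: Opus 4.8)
The plan is to prove \eqref{mainthm} by induction on the coordinate $i$, running downward from $i=d$ to $i=1$, the induction being built around the scalar recursion \eqref{coordinate}, namely $W_{i,t}=A_{ii,t}W_{i,t-1}+D_{i,t}$ with $D_{i,t}$ as in \eqref{D-terms}. For $i=d$ this is a genuine univariate SRE, $W_{d,t}=A_{dd,t}W_{d,t-1}+B_{d,t}$, and (T-2), (T-4), (T-6), (T-7), (T-8) are precisely the hypotheses of the univariate Kesten--Goldie theorem (see e.g.\ \cite{buraczewski:damek:mikosch:2016}), which gives $\P(W_d>x)\sim C_d x^{-\alpha_d}=C_d x^{-\wt\alpha_d}$ with $C_d>0$. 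For the inductive step fix $i<d$, assume \eqref{mainthm} for all $j>i$, and --- crucially --- also carry along the joint regular variation of the already-treated block $(W_j)_{j\succ i}$. Set $\beta_i=\min\{\wt\alpha_j:i\prec j\}$ (with $\beta_i=+\infty$ if there is no such $j$), so that $\wt\alpha_i=\alpha_i\wedge\beta_i$ by \eqref{newindices}.

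The first step is to read off, from the inductive hypothesis, the tail of the innovation $D_{i,t}=\sum_{j:j\succ i}A_{ij,t}W_{j,t-1}+B_{i,t}$. Because $(\bfA_t,\bfB_t)$ is i.i.d., every $A_{ij,t}$ and $B_{i,t}$ is independent of $W_{j,t-1}$; together with the moment bounds (T-5), (T-6) and Breiman's lemma this shows that $A_{ij,t}W_{j,t-1}$ is regularly varying of index $\wt\alpha_j$, that the terms with $\wt\alpha_j>\beta_i$ are asymptotically negligible, and --- using the joint regular variation of $(W_j)_{j\succ i}$ to handle the dependence among the surviving summands --- that $\P(D_{i,t}>x)\sim c_i x^{-\beta_i}$ with $c_i>0$; in particular $\E[D_{i,t}^{\gamma}]<\infty$ for every $\gamma<\beta_i$. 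One then distinguishes two cases according to which of $\alpha_i$ and $\beta_i$ is smaller.

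\emph{Case 1: $\alpha_i<\beta_i$, so $\wt\alpha_i=\alpha_i$.} Here $D_{i,t}$ has a strictly lighter tail than $x^{-\alpha_i}$ and $\E[D_{i,t}^{\alpha_i}]<\infty$. Iterating \eqref{coordinate} and letting the number of steps tend to infinity (the remainder $\pi_{ii}(t,t-n+1)W_{i,t-n}\to0$ a.s.\ because $\E\log A_{ii}<0$) yields $W_{i,t}=\sum_{n\ge0}\pi_{ii}(t,t-n+1)D_{i,t-n}$, a series in which, for each fixed $n$, the scalar weight $\pi_{ii}(t,t-n+1)=A_{ii,t}\cdots A_{ii,t-n+1}$ is independent of $D_{i,t-n}$ and satisfies $\E[\pi_{ii}(t,t-n+1)^{\alpha_i}]=1$. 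This is exactly the configuration in which the $x^{-\alpha_i}$ tail is generated by the multiplicative random walk carried by the weights rather than by the innovations, and one proves $\P(W_i>x)\sim C_i x^{-\alpha_i}$, $C_i>0$, by adapting the implicit-renewal (Goldie) argument to it; the only genuinely new point is that the innovations $D_{i,t-n}$ are dependent, not i.i.d. The auxiliary results needed here are the content of Section \ref{sec:case1}.

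\emph{Case 2: $\beta_i<\alpha_i$, so $\wt\alpha_i=\beta_i$; and the main difficulty.} Now the innovation dominates. Convexity of $s\mapsto\log\E A_{ii}^s$, together with $\E A_{ii}^{\alpha_i}=1$ and the non-degeneracy (T-8), forces $\E A_{ii}^{\beta_i}<1$, so $\sum_{n\ge0}\E[\pi_{ii}(n)^{\beta_i}]=\sum_{n\ge0}(\E A_{ii}^{\beta_i})^n<\infty$; combining this with $\P(D_{i,t}>x)\sim c_i x^{-\beta_i}$ and a Breiman-type estimate for the randomly weighted sum $W_{i,t}=\sum_{n\ge0}\pi_{ii}(t,t-n+1)D_{i,t-n}$ gives $\P(W_i>x)\sim C_i x^{-\beta_i}$ with $C_i\in(0,\infty)$. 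In both cases positivity of $C_i$ rests on (T-2) and \eqref{positive}, and the induction is then complete. The real obstacle throughout the inductive step --- and the reason Sections \ref{sec:case1} and \ref{sec:case2} are needed --- is that the innovations $D_{i,t}$ are very far from i.i.d.: through $(W_{j,\cdot})_{j\succ i}$ they depend on the entire past of $(\bfA_s,\bfB_s)$, so neither the classical Kesten--Goldie theory (Case 1) nor the standard theory of tails of randomly weighted sums (Case 2) applies off the shelf. Handling this requires localising the dominant ``one big jump'', bounding all the cross terms the dependence creates, and transporting regular variation --- including the joint regular variation of $(W_j)_{j\succ i}$ used to analyse $D_{i,t}$ --- cleanly through the induction.
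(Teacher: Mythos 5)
Your Case 1 ($\wt\alpha_i=\alpha_i$) matches the paper: the moment bound $\E D_{i}^{\alpha_i}<\infty$ (the paper's Lemma~\ref{small_moment}) feeds into Goldie's implicit renewal theorem exactly as you describe, and the paper's Lemma~\ref{minimal_alpha} is precisely this. Your Case 2, however, contains a genuine gap, and it is worth being precise about where.

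You propose to establish $\P(D_{i,t}>x)\sim c_ix^{-\beta_i}$ by invoking ``joint regular variation of $(W_j)_{j\succ i}$'' as an inductive hypothesis, and then to transfer this tail to $W_i$ via a Breiman-type theorem for the randomly weighted sum $W_{i,t}=\sum_n\pi_{ii}(t,t-n+1)D_{i,t-n}$. There are two problems. First, joint regular variation of the block $(W_j)_{j\succ i}$ is not something you can simply ``carry along'': in general the coordinates $j\succ i$ have \emph{different} tail indices $\wt\alpha_j$, so the block is not standard multivariate regularly varying, and even after you discard the lighter-tailed coordinates, the surviving $W_j$'s with $\wt\alpha_j=\beta_i$ are strongly dependent (they are all driven by the same $W_{j_0}$ far in the past). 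Theorem~\ref{main} itself only asserts one-dimensional tail asymptotics; neither it nor the paper's lemmas supply joint regular variation, and establishing it would be a substantially stronger result than the one being proved. Without it you cannot get an exact asymptotic with a positive constant for $\P(D_{i,t}>x)$, only one-sided bounds. Second, even granting such an asymptotic for $D_{i,t}$, the innovations $D_{i,t-n}$ are strongly dependent across $n$ (each depends on the entire past of $(\bfA_s,\bfB_s)_{s\le t-n}$), so the off-the-shelf tail theory for $\sum_n\pi_{ii}(t,t-n+1)D_{i,t-n}$ with regularly varying i.i.d.\ innovations does not apply; you yourself flag this difficulty but do not indicate how to resolve it.

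The paper sidesteps both issues with a different decomposition. Rather than computing the tail of the multivariate innovation $D_{i,t}$, it observes that in Case 2 there is a \emph{unique} coordinate $j_0$ with $\wt\alpha_k=\alpha_{j_0}=\wt\alpha_{j_0}$, and it iterates the full matrix recursion until every dominant contribution is funnelled through the single scalar $W_{j_0,-s}$, producing $W_{\ell,0}=\pi_{\ell j_0}(s)W_{j_0,-s}+R_{\ell j_0}(s)$ (see \eqref{main:expression} and Lemmas~\ref{q-decomposition}, \ref{one_one}, \ref{one_many}). One then needs only the scalar Breiman lemma applied to the independent pair $(\pi_{\ell j_0}(s),W_{j_0,-s})$, plus estimates showing $R_{\ell j_0}(s)$ is asymptotically negligible as $s\to\infty$ (Lemma~\ref{q^s_step}, etc.). This is the structural idea your proposal is missing: by pushing the recursion far enough back, the dependent multi-coordinate tail computation is replaced by a single scalar Breiman step, and no joint regular variation is ever required. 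The secondary ``backward'' induction over $\ell\in\{\ell:k\trianglelefteq\ell\vartriangleleft j_0\}$ that organises this reduction is likewise absent from your outline. So while your overall two-case split and the treatment of Case 1 are on target, the proposed route through Case 2 does not go through as written.
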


The following example 
gives some intuition of what the statement of the theorem means in practice. 
\begin{example}\label{ex1}
Let
\[
\bfA=
\left(
\begin{array}{ccccc}
A_{11}&A_{12}&0&0&A_{15}\\
0&A_{22}&0&A_{24}&0\\
0&0&A_{33}&0&A_{35}\\
0&0&0&A_{44}&0\\
0&0&0&0&A_{55}
\end{array}
\right), 
\]
where $\P(A_{ij}>0)>0$ and other components are zero $a.s.$ Suppose that
 $\alpha_4<\alpha_3<\alpha_2<\alpha_5<\alpha_1$ and $\bfA$ satisfies the
 assumptions of Theorem \ref{main}. 
 Then $\wt \alpha_1=\alpha_4$, $\wt \alpha_2=\alpha_4$,
 $\wt\alpha_3=\alpha_3$, $\wt \alpha_4=\alpha_4$ and
 $\wt\alpha_5=\alpha_5$. 
\end{example}
We explain the example step by step. 
The last coordinate $W_5$ is the solution to 1-dimensional SRE, so its tail index is $\alpha_5$. 
Since there is no $j$ such that $4 \vartriangleleft j$, $\alpha_4$ is the tail index of $W_4$.
For the third coordinate the situation is different: we have
$3\vartriangleleft 5$, so the tail of $W_3$ depends on $W_5$. But
$\alpha_3<\alpha_5$, so the influence of $W_5$ is negligible and we
obtain the tail index $\wt\alpha_3=\alpha_3$.
Inversely, the relations $2\vartriangleleft 4 $ and $\alpha_2>\alpha_4$ imply $\wt\alpha_2=\alpha_4$.
The first coordinate clearly depends on the second and fifth, but recall
that the second one also depends on the fourth. Hence we have to compare
$\alpha_1$ with $\alpha_2,\alpha_4$ and $\alpha_5$. The smallest one is
$\alpha_4$, hence $\wt\alpha_1=\alpha_4$ is the tail index. Although the
dependence of $W_1$ on $W_4$ is indirect, we see it in the relation $1\vartriangleleft 4$.\\

\subsection{Proof of the main result}
\label{proof:main}
Since the proof includes several 
preliminary results which are long and technical, they are postponed to Sections \ref{sec:case1} and \ref{sec:case2}. To make the argument more readable
we provide an outline of the proof here, referring to 
those auxiliary results. In the proof of Theorem
\ref{main} we fix the coordinate number $k$ and
consider two cases: $\wt \alpha_k=\alpha_k$ and $\wt \alpha_k<\alpha_k$.

In the first case $\wt \alpha_k=\alpha_k$ the proof is based on
Goldie's result \cite[Theorem 2.3]{goldie:1991} and it is contained in
Lemmas \ref{small_moment} and \ref{minimal_alpha}.

If $\wt \alpha_k<\alpha_k$, the proof is more complicated and
requires several auxiliary results. We proceed by induction. Let
$j_0$ be the maximal coordinate among $j
\vartriangleright k$ such that the modified tail indices $\wt\alpha_j$
and $\wt\alpha_k$ are equal. 
Then clearly, $\wt \alpha_k=\alpha_{j_0}=\wt\alpha_\ell$
for any $\ell$ such that $k \trianglelefteq \ell \vartriangleleft j_0$. 
We start with the maximal among such coordinates (in the standard order on $\N$) and prove \eqref{mainthm} for it. 
Inductively we reduce 
$\ell$ using results for larger tail indices
and finally we reach $\ell=k$. 

We develop a component-wise series representation \eqref{eq:series} for $\bfW_0$. In Lemma \ref{series_rep} we prove that it indeed coincides a.s.\ with \eqref{series}.

The series \eqref{eq:series} is decomposed into parts step by step (Lemmas
\ref{q-decomposition}, \ref{one_many} and \ref{one_one}). Finally we obtain the following expression:
\begin{equation}
\label{main:expression}
W_{\ell,0} = \pi_{\ell j_0}(s)W_{j_0,-s}+R_{\ell j_0}(s).
\end{equation}
Our goal is to prove that as $s\to\infty$, the tail asymptotics of the first term $\pi_{\ell j_0}(s)W_{j_0,-s}$ approaches that of \eqref{mainthm}, while the second term $R_{\ell j_0}(s)$ becomes negligible.

The quantity $R_{\ell j_0}(s)$ is defined inductively by
\eqref{R} and \eqref{eq:remainer} as a finite collection of negligible
parts, each being estimated in a different way. In the process of
finding the main dominant term, we simultaneously
settle the upper bounds for the negligible parts. This is done through \eqref{qs2} in Lemma \ref{q-decomposition}, Lemma \ref{q^s_step} and 
\eqref{d2} under conditions of Lemma \ref{one_many} and Lemma
\ref{one_one}. Then in the end $R_{\ell j_0}(s)$ is proved to be negligible
as $s\to \infty$.

The final step is related to Breiman's lemma applied to $\pi_{\ell j_0}(s)W_{j_0,-s}$. 
By independence between $\pi_{\ell j_0}(s)$ and $W_{j_0,-s}$, 
we obtain the equality in asymptotics
\[
\P(\pi_{\ell j_0}(s)W_{j_0,-s} >x)\sim \E
[\pi_{\ell j_0}(s)^{\alpha_{j_0}}]\P(W_{j_0,-s}>x) 
\]
for fixed $s$. 
Recall the we need to let $s\to\infty$ for $R_{\ell j_0}(s)$ to be negligible. The existence of
$\lim_{s\to\infty} \E [\pi_{\ell j_0}(s)^{\alpha_{j_0}} ]$ is assured in
Lemma \ref{u:finite}. Then eventually we get $\P(W_{\ell,0}>x)\sim C\cdot
\P(W_{j_0,-s}>x)$ for a positive constant $C$, which can be explicitly computed. Now \eqref{mainthm} follows from stationarity and Lemma \ref{minimal_alpha}.

We move to the proof of Theorem \ref{main}.

\begin{proof}[Proof of Theorem \ref{main}]
Fix $k\le d$. If $\wt\alpha_k=\alpha_k$, then the statement of the
 theorem directly follows from Lemma \ref{minimal_alpha}. 
 If $\wt\alpha_k<\alpha_k$, then there is a unique $j_0\vartriangleright k$ such that $\wt\alpha_k=\wt\alpha_{j_0}=\alpha_{j_0}$. Another application of Lemma \ref{minimal_alpha} proves that
\begin{equation}
\label{j_0-tail}
\lim_{x\to\infty}x^{\alpha_{j_0}}\P(W_{j_0}>x)=C_{j_0}
\end{equation}
for some constant $C_{j_0}>0$. 
By stationarity we set $t=0$ without loss of generality.

The proof follows by induction with respect to number $j$ in backward direction, namely we start
 with $\ell=j_1:=\max\{j : k \trianglelefteq j \vartriangleleft j_0\}$ and reduce $\ell$
 until $\ell=k$. Notice that $j_1\prec j_0$, thus $\ell=j_1$ satisfies conditions of Lemma \ref{one_one}. Thus there exists $s_0$ such that for any $s>s_0$ we have 
\begin{align}
\label{eq:sre:mainpf}
W_{\ell,0} = \pi_{\ell j_0}(s)W_{j_0,-s}+R_{\ell j_0}(s),
\end{align}
where $\pi_{\ell j_0}(s)$ is independent of
 $W_{j_0,-s}$ such that $0<\E\pi_{\ell j_0}(s)^{\alpha_{j_0}}<\infty$, and $R_{\ell j_0}(s)$ satisfies \eqref{d2}. 
We are going to estimate $\P(W_{\ell,0}>x)$ from both below and above.
Since $R_{\ell j_0}(s)\ge 0\,a.s.$ 
\[
\P(W_{\ell,0}>x)\ge\P(\pi_{\ell j_0}(s)W_{j_0,-s}>x)
\]
holds, and by Breiman's lemma \cite[Lemma
 B.5.1]{buraczewski:damek:mikosch:2016} for fixed $s>s_0$
\[
\lim_{x\to\infty}\frac{\P(\pi_{\ell j_0}(s)W_{j_0,-s}>x)}{\P(W_{j_0,-s}>x)}
 =\E[\pi_{\ell j_0}(s)^{\alpha_{j_0}}] =:u_\ell(s).
\]
Combining these with \eqref{j_0-tail}, we obtain the lower estimate 
\begin{equation}
\label{liminf}
 \varliminf_{x\to\infty} x^{\alpha_{j_0}} \P(W_{\ell,0}>x)\ge u_\ell(s)\cdot
 C_{j_0}. 
\end{equation}
Now we pass to the upper estimate. 
Recall \eqref{d2} in Lemma \ref{one_one} which implies that 
for any $\delta\in(0,1)$ and $\varepsilon>0$ there exists $s_1\,(> s_0)$ such that for
 $s> s_1$ 
\[
 \varlimsup_{x\to\infty}x^{\alpha_{j_0}}\P\left(R_{\ell j_0}(s)>
 \delta x\right) = \delta^{-\alpha_{j_0}}\varlimsup_{x\to\infty}(\delta x)^{\alpha_{j_0}}\P\left(R_{\ell j_0}(s)>\delta x\right)<\varepsilon.
\]
Then for fixed $s\ge s_1$, we apply Lemma \ref{breiman_limsup}, which is a version of Breiman's lemma, to \eqref{eq:sre:mainpf} and obtain 
\begin{align}
\varlimsup_{x\to\infty}x^{\alpha_{j_0}}\P(W_{\ell,0}>x)&=\varlimsup_{x\to\infty}x^{\alpha_{j_0}}
\P\left(\pi_{\ell j_0}(s)W_{j_0,-s}+R_{\ell j_0}(s)>x\right) 
\nonumber \\
&\le\varlimsup_{x\to\infty}x^{\alpha_{j_0}}\P\left(\pi_{\ell j_0}(s)W_{j_0,-s}>x
 (1-\delta)\right)+\varlimsup_{x\to\infty}x^{\alpha_{j_0}}\P\left(R_{\ell
 j_0}(s)>\delta
 x\right) \nonumber \\
&<\varlimsup_{x\to\infty}x^{\alpha_{j_0}}\P\left(\frac{\pi_{\ell j_0}(s)}{1-\delta}W_{j_0,-s}>x\right)+\varepsilon \nonumber \\
&=\E[\pi_{\ell j_0}(s)^{\alpha_{j_0}}](1-\delta)^{-\alpha_{j_0}}\lim_{x\to\infty}x^{\alpha_{j_0}}\P(W_{j_0,-s}>x)+\varepsilon
 \nonumber \\
&=(1-\delta)^{-\alpha_{j_0}}u_\ell (s)\cdot C_{j_0}+\varepsilon, \label{limsup}
\end{align}
where we also use \eqref{j_0-tail}. We may let
 $\varepsilon\to 0$ and $\delta\to0$ together with $s\to\infty$ here and in \eqref{liminf}. The existence and positivity of 
 the limit 
$u_\ell =\lim_{s\to \infty} u_\ell (s)$ is assured by Lemma \ref{u:finite}. 
Thus from \eqref{liminf} and \eqref{limsup} we have  
\begin{align*}
 u_\ell \cdot C_{j_0} = \lim_{s\to \infty} u_\ell (s)\cdot C_{j_0} \le
 \varliminf_{x\to\infty}x^{\alpha_{j_0}}\P(W_{\ell,0}>x) \le
 \varlimsup_{x\to\infty}x^{\alpha_{j_0}}\P(W_{\ell,0}>x) \le \lim_{s\to \infty}
 u_\ell (s)\cdot C_{j_0} = u_\ell \cdot C_{j_0}. 
\end{align*}
This implies that 
\begin{equation}
\label{c_j}
\lim_{x\to\infty}x^{\wt\alpha_\ell}\P(W_{\ell ,0}>x)=u_\ell \cdot C_{j_0}=:C_\ell.
\end{equation}
Now we go back to the induction process.

If $j_1=k$, then the proof is over, and if $j_1\neq k$, we set
 $j_2=\max\{j:k\trianglelefteq j\vartriangleleft j_0, j\neq j_1\}$. Then
there are two possibilities, depending on whether $j_2\prec j_1$
or not. If $j_2\not\prec j_1$, then the assumptions of Lemma \ref{one_one} are satisfied with $\ell=j_2$ and we repeat the argument that we used for $j_1$. If $j_2\prec j_1$, the assumptions of Lemma \ref{one_many} are 
 fulfilled with $\ell=j_2$. Since the assertion of Lemma \ref{one_many}
 is the same as that of Lemma \ref{one_one}, we can again repeat the argument that we used for $j_1$. 

In general, we define 
$j_{m+1}=\max(\{j: k\trianglelefteq j \vartriangleleft
j_0\}\setminus\{j_1,\ldots,j_m\})$.
If $j_{m+1}\prec j$ for some $j\in\{j_1,\ldots,j_m\}$, then we use Lemma \ref{one_many} with $\ell=j_{m+1}$. Otherwise we use Lemma \ref{one_one} with the same $\ell$.
 Then by induction we prove \eqref{c_j} for every $\ell:k\trianglelefteq \ell\vartriangleleft j_0$, particularly in the end we also obtain
\[
C_k=\lim_{x\to\infty}x^{\alpha_{j_0}}\P(W_k>x).
\]
\end{proof}

Notice that we have two limit operations, with respect to $x$ and $s$,
and always the limit with respect to $x$ precedes. We cannot exchange the
limits, namely we have to let $s\to\infty$ with $s$ depending on $x$. 


\section{Case $\wt\alpha_k=\alpha_k$}\label{sec:case1}
The assumption $\wt\alpha_k=\alpha_k$ means that the tail behavior of
$W_k$ is determined by its auto-regressive property, namely the tail
index is the same as that of the solution $V$ of the
stochastic equation $V\stackrel{d}=A_{kk}V+B_k$. The tails of other coordinates on which $k$
depends are of smaller order, which is rigorously shown in Lemma \ref{small_moment}. In Lemma \ref{minimal_alpha} 
we obtain the tail index of $W_k$ by applying Goldie's result. 
By Lemma \ref{small_moment} we
observe that the perturbation induced by random elements other than those
of $k$th coordinate $(A_{kk},B_k)$ are negligible.

In what follows we work on a partial sum of the
stationary solution \eqref{series} component-wisely.  
To write the coordinates of the partial sum directly, the following definition is useful.   
\begin{definition}
\label{sequences}
For $m\in\N$ and $1\le i,j \le d$ let $H_m(i,j)$ be the set of all sequences of $m+1$ indices
 $(h(k))_{0\le k\le m}$ such that $h(0)=i, h(m)=j$ and $h(k)\preccurlyeq h(k+1)$ for $k=0,\ldots,m-1$.
For convenience, 
the elements of $H_m(i,j)$ will be denoted by $h$.
\end{definition}
Notice that each of such sequences is non-decreasing since $\bfA$ is upper triangular. Moreover, $H_m(i,j)$ is nonempty if and only if $i\trianglelefteq j$ and $m$ is large enough.
Now we can write
\begin{equation}
\label{pi}
 \pi_{ij}(s)=\sum_{h\in H_s(i,j)}\prod_{p=0}^{s-1}A_{h(p)h(p+1),-p}.
\end{equation}
Similar expression for $\pi_{ij}(t,s)$ can be obtained by shifting the time indices, 
\begin{equation*}
 \pi_{ij}(t,s) = \sum_{h \in H_{t-s+1}(i,j)} \prod_{p=0}^{t-s}
  A_{h(p)h(p+1),t-p},\quad t\ge s,
\end{equation*}
which will be used later.
Since the sum is finite, it follows from condition (T-5) that $\E
\pi_{ij}(s)^{\wt\alpha_i}<\infty$ for any $i,j$. Moreover if $j\trianglerighteq i$, then
$\P(\pi_{ij}(s)>0)>0$ for $s$
large enough (in particular $s\ge j-i$ is sufficient). 
By definition
$\pi_{ij}(s)$ is independent of $W_{j,-s}$. Notice that when $i=\ell$, $j=j_0$ and $\ell \trianglelefteq j_0$, it is the coefficient of our targeting representation 
\eqref{main:expression}. 
\begin{lemma}
\label{small_moment}
For any coordinate $i\in\{1,\ldots,d\}$, 
$\E W_i^\alpha<\infty$ if $
 0<\alpha<\wt\alpha_i$.
\end{lemma}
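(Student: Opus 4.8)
The plan is to estimate $\E W_i^\alpha$ directly from the series representation \eqref{series}, using that, because $\alpha<\wt\alpha_i$, every matrix entry that can occur in the expansion of the $i$-th coordinate has a finite $\alpha$-th moment, while the diagonal entries even have $\alpha$-th moment strictly below $1$, which will produce geometric decay. Fix the coordinate $i$ and an exponent $\alpha\in(0,\wt\alpha_i)$. By \eqref{triangle} we have $\alpha<\wt\alpha_i=\min\{\alpha_\ell:i\trianglelefteq\ell\}$, hence $\alpha<\alpha_\ell$ for every $\ell$ with $i\trianglelefteq\ell$; this single observation is what makes all the moments below finite.

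First I would record the contraction property of the diagonal entries. For each $\ell$ with $i\trianglelefteq\ell$ the map $s\mapsto\E A_{\ell\ell}^s$ is finite on $[0,\alpha_\ell]$ (as $A_{\ell\ell}^s\le 1+A_{\ell\ell}^{\alpha_\ell}$ and $\E A_{\ell\ell}^{\alpha_\ell}=1$ by (T-4)) and strictly convex, since (T-8) forces $\log A_{\ell\ell}$ conditioned on $\{A_{\ell\ell}>0\}$ to be non-degenerate; being $\le 1$ at $s=0$ and equal to $1$ at $s=\alpha_\ell$, it lies strictly below its chord, hence strictly below $1$, on the open interval $(0,\alpha_\ell)\ni\alpha$. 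Thus $\rho:=\max\{\E A_{\ell\ell}^\alpha:i\trianglelefteq\ell\}<1$. Moreover, again because $\alpha<\alpha_\ell$ whenever $i\trianglelefteq\ell$, condition (T-5) gives $\E A_{\ell m}^\alpha\le 1+\E A_{\ell m}^{\alpha_\ell}<\infty$ for all $m$, and (T-6) gives $\E B_j^\alpha<\infty$ for every $j$ with $i\trianglelefteq j$.

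Next I would expand $W_{i,0}$. Taking \eqref{series} at $t=0$ and using the triangular structure, $W_{i,0}=\sum_{n=0}^\infty\sum_{j:\,i\trianglelefteq j}\pi_{ij}(n)B_{j,-n}$, a sum of non-negative terms; and $\pi_{ij}(n)$ is independent of $B_{j,-n}$ because the matrices composing $\bfPi_n$ carry the time indices $0,-1,\dots,-n+1$ whereas $B_{j,-n}$ carries the time index $-n$. To control the coefficients I would use \eqref{pi}: $\pi_{ij}(n)=\sum_{h\in H_n(i,j)}\prod_{p=0}^{n-1}A_{h(p)h(p+1),-p}$, a sum over chains of products of \emph{independent} factors (distinct times). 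Along any chain $h\in H_n(i,j)$ every index $h(p)$ satisfies $i\trianglelefteq h(p)$, so each diagonal factor ($h(p)=h(p+1)$) has $\alpha$-th moment $\le\rho$, each off-diagonal factor ($h(p)\prec h(p+1)$) has $\alpha$-th moment bounded by a finite constant $K=K(i)\ge 1$, and since $h$ is non-decreasing from $i$ to $j$ there are at most $d$ off-diagonal factors, hence at least $n-d$ diagonal ones. As $|H_n(i,j)|\le\binom{n-1+d}{d}=O(n^d)$, subadditivity of $t\mapsto t^\alpha$ (when $\alpha\le 1$) gives $\E\pi_{ij}(n)^\alpha\le C n^d\rho^{\,n-d}$, and Minkowski's inequality (when $\alpha\ge 1$) gives $\|\pi_{ij}(n)\|_\alpha\le C n^d(\rho^{1/\alpha})^{n-d}$, both summable in $n$ since $\rho<1$. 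Summing over $n$ and $j$ — with subadditivity and independence if $\alpha\le 1$, with Minkowski and independence if $\alpha\ge 1$, using $\E B_j^\alpha<\infty$ — yields $\E W_i^\alpha=\E W_{i,0}^\alpha<\infty$.

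The step I expect to be the real work is the coefficient estimate: one must check that no diagonal entry $A_{\ell\ell}$ with $\alpha_\ell\le\alpha$ can ever appear along a chain in $H_n(i,j)$ (this is exactly where the identity $\wt\alpha_i=\min\{\alpha_\ell:i\trianglelefteq\ell\}$ from \eqref{triangle} is used), and then combine the uniform geometric factor $\rho^{\,n-d}$ from the at least $n-d$ diagonal entries with the merely polynomial count $O(n^d)$ of chains and the bounded contribution $K^{\,d}$ of the at most $d$ off-diagonal entries; keeping the cases $\alpha\le 1$ and $\alpha\ge 1$ straight is a minor bookkeeping point. An alternative, equally workable route is backward induction on $i$: iterate the scalar recursion \eqref{coordinate} to get $W_{i,0}=\sum_{n\ge 0}\bigl(\prod_{p=0}^{n-1}A_{ii,-p}\bigr)D_{i,-n}$ with $\prod_p A_{ii,-p}$ independent of $D_{i,-n}$, invoke $\E A_{ii}^\alpha<1$, and bound $\E D_i^\alpha$ from $D_i=\sum_{j\succ i}A_{ij}W_j+B_i$ using (T-5)--(T-6) together with the induction hypothesis $\E W_j^\alpha<\infty$, which applies because $\alpha<\wt\alpha_i\le\wt\alpha_j$ for $j\succ i$.
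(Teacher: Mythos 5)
Your argument is essentially the paper's own proof: both expand $W_{i,0}$ via the series \eqref{series}, write $\pi_{ij}(n)$ as a sum over chains $H_n(i,j)$ of products of independent factors, observe that every index visited lies in $\{\ell:i\trianglelefteq\ell\}$ so each diagonal factor has $\alpha$-th moment $\le\rho<1$, bound the chain count by $O(n^d)$, and conclude by comparison with a convergent geometric-times-polynomial series, splitting into $\alpha\le 1$ (subadditivity) and $\alpha>1$ (Minkowski). The only differences are cosmetic — you spell out the convexity argument for $\rho<1$ (which the paper takes for granted) and sketch an alternative backward-induction route — but the core decomposition and estimates coincide.
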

\begin{proof}
 For fixed $i$, let us approximate $\bfW_{0}$ by partial sums of the series \eqref{series}. We will denote $\bfS_n=\sum_{m=0}^{n}\bfPi_{m}\bfB_{-m}$ and $\bfS_n=(S_{1,n},\ldots,S_{d,n})$. We have then
 \begin{equation}
  \label{eq:defSin}
  S_{i,n}=\sum_{m=0}^n\sum_{j:j\trianglerighteq i}\pi_{ij}(m)B_{j,-m}
  =\sum_{m=1}^n\sum_{j:j\trianglerighteq i}\sum_{h\in H_m(i,j)}\Big(\prod_{p=0}^{m-1}A_{h(p)h(p+1),-p}\Big)B_{j,-m}+B_{i,0}.
 \end{equation}
 Suppose that $\alpha \le 1$. Then, by independence of
 $A_{h(p-1)h(p),-p},\,p=1,\ldots,m-1$ and $B_{j,-m}$, 
\[
 \E S_{i,n}^\alpha \le\sum_{m=1}^n\sum_{j:j \trianglerighteq i}\sum_{h\in H_m(i,j)}
 \Big(\prod_{p=0}^{m-1}\E A_{h(p)h(p+1),-p}^\alpha\Big) \E B_{j,-m}^\alpha +\E B_{i,0}^\alpha.
\]
To estimate the right-hand side, we will need to estimate the number of
 elements of $H_m(i,j)$.
 To see the convergence of the series
\eqref{eq:defSin} it suffices to consider $m>2d$. Recall that the
 sequences in $H_m(i,j)$ are non-decreasing, thus for a fixed $j$ there are at most $j-i$
 non-diagonal terms in each product on the right-hand side of \eqref{eq:defSin}. The non-diagonal terms in the product coincide with the time indices $t$, for which the values $h(t-1)$ and $h(t)$ are different.
If there are exactly $j-i$ such time indices, then
the values are uniquely determined. There are $\binom{m}{j-i}$
possibilities of placing the moments among $m$ terms of the sequence and $\binom{m}{j-i}<\binom{m}{d}$ since $m>2d$ and $j-i<d$.\par
 If we have $l<j-i$  non-diagonal elements in the product, then there are $\binom{m}{l}$ possibilities of placing them among other terms and there are less than $\binom{j-i}{l}$ possible sets of $l$ values. 
 Hence we have at  most $\binom{m}{l}\binom{j-i}{l}<\binom{m}{d}d!$ sequences for a fixed
 $l$. Moreover, there are $j-i< d$ possible values of $l$, and hence
 there are at most $d\cdot d!\cdot\binom{m}{d}$ sequences in
 $H_m (i,j)$. Since $\binom{m}{d}<\frac{m^d}{d!}$, the number of
 sequences in $H_m(i,j)$ is further bounded by $dm^d$. 

Now recall that there is $\rho<1$ such that $\E A_{jj}^\alpha\le\rho$
 for each $j\vartriangleright i$ and that there is a uniform bound $M$
 such that $\E A_{jl}^\alpha<M$ and $\E B_j^\alpha<M$ whenever $j
 \trianglerighteq i$, for any $l$. 
It follows that
\begin{align}
\label{ineq:Sin}
\E S_{i,n}^\alpha \le&\, C+ \sum_{m=2d}^n\sum_{j:j \trianglerighteq i}\sum_{h\in H_m(i,j)}M^{d+1}\rho^{m-d}\\
\le&\, C + \sum_{m=2d}^n\sum_{j:j\trianglerighteq i}dM^{d+1}\rho^{-d}\cdot m^d
 \rho^m \nonumber\\
\le&\, C+ \sum_{m=2d}^n d^2 M^{d+1}\rho^{-d}\cdot m^d \rho^m \nonumber\\
=&\, C+ d^2M^{d+1}\rho^{-d} \sum_{m=2d}^nm^d\rho^m<\infty \nonumber
\end{align}
uniformly in $n$, with $C>0$, which is bounded from above. Hence there
 exists the limit $S_i=\lim_{n\to\infty}S_{i,n}$ $a.s.$ and $\E
 S_i^\alpha<\infty$. By \eqref{series} we have 
 $\bfW_{0}=\lim_{n\to\infty}\bfS_n\ a.s.$ and we conclude
 that $\E W_{i,0}^\alpha <\infty$.\par
If $\alpha>1$, then by Minkowski's inequality we obtain
\[
(\E S_{i,n}^\alpha)^{1/\alpha}\le\, C'+ \sum_{m=2d}^n\sum_{j:j \trianglerighteq
 i}\sum_{h\in H_m(i,j)}\Big(\prod_{p=0}^{m-1}(\E A_{h(p)h(p+1),-p}^\alpha
)^{1/\alpha}\Big)(\E B_{j,-m}^\alpha)^{1/\alpha} 
\]
with $C'>0$. 
The same argument as above shows the uniform convergence. Thus the
 conclusion follows.
\end{proof}

Suppose that we have $\wt\alpha_k=\alpha_k$. This implies that
$\alpha_k<\wt\alpha_j$ for each $j\vartriangleright k$ 
and hence, by Lemma \ref{small_moment}, $\E W_j^{\alpha_k}<\infty$. 
The next lemma proves the assertion of the main theorem in case $\wt \alpha_k=\alpha_k$.
\begin{lemma}
\label{minimal_alpha}
Suppose assumptions of Theorem \ref{main} are satisfied and let $k\le d$. Provided that
$\wt\alpha_k=\alpha_k$,
there exists a positive constant $C_k$ such that
\begin{equation}
\lim\limits_{x\to\infty}x^{\wt\alpha_k}\P(W_k>x)=C_k.
\end{equation}
\end{lemma}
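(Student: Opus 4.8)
The plan is to read off the tail of $W_k$ from its own one--dimensional recursion \eqref{coordinate}, namely $W_{k,0}=A_{kk,0}W_{k,-1}+D_{k,0}$, by applying Goldie's implicit renewal theorem \cite[Theorem 2.3]{goldie:1991} with multiplier $M=A_{kk}$, and then to deduce strict positivity of the limiting constant by a stochastic comparison with a genuinely one--dimensional SRE.

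Before that I would record the moment estimates available in this regime. The assumption $\wt\alpha_k=\alpha_k$, together with \eqref{newindices} and the distinctness of the exponents (condition (T-4)), forces $\alpha_k<\wt\alpha_j$ \emph{strictly} for every $j\succ k$; hence Lemma \ref{small_moment} gives $\E W_j^{\gamma}<\infty$ for all $\gamma\in[\alpha_k,\wt\alpha_j)$, and in particular $\E W_j^{\alpha_k+\varepsilon}<\infty$ for some $\varepsilon>0$ and every $j\succ k$. Combining this with (T-5) and (T-6) in the definition \eqref{D-terms} of $D_{k,0}$ --- estimating the sum of products by subadditivity of $t\mapsto t^{\alpha_k}$ when $\alpha_k\le 1$ and by Minkowski's inequality when $\alpha_k>1$, exactly as in the proof of Lemma \ref{small_moment} --- I would obtain $\E D_{k,0}^{\alpha_k}<\infty$.

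Next comes the application of Goldie's theorem with $M=A_{kk}$. The hypotheses $\E M^{\alpha_k}=1$, $\E[M^{\alpha_k}\log^+M]<\infty$ and non-arithmeticity of $\log M$ given $\{M>0\}$ are precisely (T-4), (T-7), (T-8). The one substantial input left to verify is
\[
\int_0^\infty\bigl|\P(W_k>x)-\P(\wt M\,W_k'>x)\bigr|\,x^{\alpha_k-1}\,\d x<\infty ,
\]
where $\wt M\stackrel{d}{=}A_{kk}$ and $W_k'\stackrel{d}{=}W_k$ are taken \emph{independent}. The delicate point is that $(A_{kk,0},D_{k,0})$ is in general \emph{not} independent of $W_{k,-1}$, because $D_{k,0}$ contains the terms $A_{kj,0}W_{j,-1}$ with $j\succ k$ and $W_{j,-1}$ is correlated with $W_{k,-1}$. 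Nonetheless $A_{kk,0}$ alone is independent of $\bfW_{-1}$, so by stationarity $A_{kk,0}W_{k,-1}\stackrel{d}{=}\wt M\,W_k'$, and since $D_{k,0}\ge 0$ a.s.\ the recursion \eqref{coordinate} yields the one-sided identity
\[
\P(W_k>x)-\P(\wt M\,W_k'>x)=\P\bigl(A_{kk,0}W_{k,-1}\le x<A_{kk,0}W_{k,-1}+D_{k,0}\bigr)\ge 0 .
\]
By Tonelli the weighted integral then equals $\alpha_k^{-1}\E\bigl[(a+b)^{\alpha_k}-a^{\alpha_k}\bigr]$ with $a=A_{kk,0}W_{k,-1}$ and $b=D_{k,0}$. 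For $\alpha_k\le 1$ the integrand is bounded by $b^{\alpha_k}$, so the integral is at most $\alpha_k^{-1}\E D_{k,0}^{\alpha_k}<\infty$. For $\alpha_k>1$ I would use the elementary bound $(a+b)^{\alpha_k}-a^{\alpha_k}\le c_{\alpha_k}\bigl(a^{\alpha_k-1}b+b^{\alpha_k}\bigr)$, expand $b=D_{k,0}$ into the summands of \eqref{D-terms} so that no entry $A_{kj}$ is raised beyond its guaranteed moment $\alpha_k$, and factor each resulting expectation into a ``time $0$'' part (finite by H\"older and (T-5), (T-6), (T-7)) times a part depending only on times $\le -1$, of the form $\E[W_{k,-1}^{\alpha_k-1}W_{j,-1}]$ or $\E W_{k,-1}^{\alpha_k-1}$, which is finite by H\"older and Lemma \ref{small_moment} --- here it is precisely the strict inequality $\wt\alpha_j>\alpha_k$, not mere equality, that leaves the room needed in the H\"older exponents. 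Goldie's theorem then gives $\lim_{x\to\infty}x^{\alpha_k}\P(W_k>x)=C_k$ for some $C_k\ge 0$.

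It remains to prove $C_k>0$. Since $\bfA\ge 0$ and $\bfB\ge 0$ a.s.\ by (T-1), the series representation \eqref{series} gives $W_{k,0}\ge\sum_{n\ge 0}\pi_{kk}(n)B_{k,-n}=:V_0$ a.s., and $V_0$ is the stationary solution of the one-dimensional SRE $V_t=A_{kk,t}V_{t-1}+B_{k,t}$, whose coefficients meet the hypotheses of the classical univariate Kesten--Goldie theorem by (T-2), (T-6), (T-7), (T-8) and \eqref{positive}. Hence $\P(V_0>x)\sim C_V x^{-\alpha_k}$ with $C_V>0$ (see e.g.\ \cite{buraczewski:damek:mikosch:2016}), so $C_k=\lim_{x\to\infty}x^{\alpha_k}\P(W_k>x)\ge\lim_{x\to\infty}x^{\alpha_k}\P(V_0>x)=C_V>0$, which is the assertion of the lemma because $\wt\alpha_k=\alpha_k$. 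I expect the verification of the integral condition for $\alpha_k>1$ to be the main technical obstacle: $D_{k,0}$ cannot be decoupled from $W_{k,-1}$, so one has to lean on the residual independence of $A_{kk,0}$ from the past and on the slack $\wt\alpha_j>\alpha_k$ supplied by Lemma \ref{small_moment}.
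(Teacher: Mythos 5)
Your proposal reproduces the paper's argument for the existence of the limit essentially step for step: you apply Goldie's implicit renewal theorem to the scalar recursion $W_{k,0}=A_{kk,0}W_{k,-1}+D_{k,0}$ from \eqref{coordinate}, use the independence of $A_{kk,0}$ from $\bfW_{-1}$ and stationarity to identify $\P(\wt M W_k'>x)$ with $\P(A_{kk,0}W_{k,-1}>x)$, rewrite the weighted integral by Tonelli as $\alpha_k^{-1}\E\bigl[(A_{kk,0}W_{k,-1}+D_{k,0})^{\alpha_k}-(A_{kk,0}W_{k,-1})^{\alpha_k}\bigr]$ (the paper cites Goldie's Lemma~9.4 for the same identity), and bound it separately for $\alpha_k\le1$ and $\alpha_k>1$ via $\E D_{k,0}^{\alpha_k}<\infty$, H\"older, and the strict slack $\wt\alpha_j>\alpha_k$ coming from Lemma~\ref{small_moment}. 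Where you genuinely diverge is the positivity of $C_k$. The paper reads it directly off Goldie's formula \eqref{c_integral}: since $W_{k,0}\ge A_{kk,0}W_{k,-1}+B_{k,0}$ and $\P(B_{k,0}>0)>0$ by (T-2), the numerator $I$ in \eqref{goldie:lemma} is strictly positive, while (T-7) makes the denominator $\E[A_{kk}^{\alpha_k}\log A_{kk}]$ finite and positive. You instead dominate $W_{k,0}\ge V_0:=\sum_{n\ge0}\Pi_n^{(k)}B_{k,-n}$, the stationary solution of the scalar SRE $V_t=A_{kk,t}V_{t-1}+B_{k,t}$, and invoke the strict positivity of its Kesten--Goldie constant. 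Both routes work. The paper's is shorter and self-contained once Goldie's constant formula is in hand; yours buys a cleaner separation of ``the limit exists'' from ``the limit is positive'', at the cost of one more appeal to the univariate Kesten--Goldie theorem together with its non-degeneracy hypothesis (no $c$ with $\P(A_{kk}c+B_k=c)=1$) -- this does follow from (T-1), (T-2) and (T-4), but it is worth recording explicitly rather than leaving implicit in the citation.
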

\begin{proof}
We are going to use Theorem 2.3 of Goldie \cite{goldie:1991} which asserts that if
\begin{equation}
\label{goldie:integral}
\int_0^\infty|\P(W_{k}>x)-\P(A_{kk}W_{k}>x)|x^{\alpha_k-1}dx<\infty, 
\end{equation}
then
\begin{equation}
 \label{k-tail}
\lim_{x\to\infty}x^{\alpha_k}\P(W_k>x)=C_k,
\end{equation}
where
\begin{equation}
\label{c_integral}
C_k=\frac{1}{\E\big[A_{kk}^{\alpha_k}\log A_{kk}\big]}\int_0^\infty\left(\P(W_{k}>x)-\P(A_{kk}W_{k}>x)\right)x^{\alpha_k-1}dx.
\end{equation}
To prove \eqref{goldie:integral}, we are going to use Lemma 9.4 from
 \cite{goldie:1991}, which derives the equality 
\begin{equation}
\label{goldie:lemma}
\int_0^\infty |\P(W_{k,0}>x)-\P(A_{kk,0}W_{k,-1}>x)|x^{\alpha_k-1}dx=\frac{1}{\alpha_k}\E
\big|W_{k,0}^{\alpha_k}-(A_{kk,0}W_{k,-1})^{\alpha_k}\big|=:I.
\end{equation}
From \eqref{coordinateSRE} we deduce that $W_{k,0}\ge A_{kk,0}W_{k,-1}$ a.s. Hence the absolute value may be omitted on both sides of \eqref{goldie:lemma}.
We consider two cases depending on the value of $\alpha_k$. \\
{\bf Case 1. $\alpha_k<1$.}\par
Due to \eqref{coordinate} and Lemma \ref{small_moment} we obtain 
\[
\alpha_k I\le\E\left[(W_{k,0}-A_{kk,0}W_{k,-1})^{\alpha_k}\right]=\E
 D_{k,0}^{\alpha_k}<\infty. 
\]
{\bf Case 2. $\alpha_k\ge 1$.}\par
For any $x\ge y\ge 0$ and $\alpha\ge 1$ the following inequality holds:
\[
x^\alpha-y^\alpha=\alpha\int_y^x t^{\alpha-1}dt\le\alpha x^{\alpha-1}(x-y).
\]
Since $W_{k,0}\ge A_{kk,0}W_{k,-1}\ge 0$ a.s. we can estimate
\begin{align*}
 W_{k,0}^{\alpha_k}-(A_{kk,0}W_{k,-1})^{\alpha_k}&
\le \alpha_k W_{k,0}^{\alpha_k-1}(W_{k,0}-A_{kk,0}W_{k,-1})\\
&=\alpha_k D_{k,0}W_{k,0}^{\alpha_k-1}\\
&=\alpha_k D_{k,0}(A_{kk,0}W_{k,-1}+D_{k,0})^{\alpha_k-1}.
\end{align*}
Since the formula
\[
(x+y)^\alpha\le\max\{1,2^{\alpha-1}\}(x^\alpha+y^\alpha)
\]
holds for any $x,y>0$ and each $\alpha$, by putting $\alpha=\alpha_k-1$ we obtain
\[
 I\le\E\left[D_{k,0}(A_{kk,0}W_{k,-1}+D_{k,0})^{\alpha_k-1}\right]\le\max\{1,2^{\alpha_k-2}\}
\big(\E
 D_{k,0}^{\alpha_k}+\E [ D_{k,0}(A_{kk,0}W_{k,-1})^{\alpha_k-1} ]\big).
\]
Since $\E D_{k,0}^{\alpha_k}<\infty$ by Lemma \ref{small_moment}, it
 remains to prove the finiteness of the second expectation.
In view of \eqref{coordinate}, 
\begin{align*}
\E [ D_{k,0}(A_{kk,0}W_{k,-1})^{\alpha_k-1} ] =\E\left[A_{kk,0}^{\alpha
 _k-1}B_{k,0}\right]\E W_{k,-1}^{\alpha_k-1}+\sum_{j\succ
 k}\E\left[A_{kk,0}^{\alpha_k-1}A_{kj,0}\right]
\E\left[W_{k,-1}^{\alpha_k-1}W_{j,-1}\right],
\end{align*}
where we use independence of $(A_{\cdot\cdot,0},B_{\cdot,0})$ and
 $W_{\cdot,-1}$.
Since $\E W_{k,-1}^{\alpha_k-1}<\infty$ by Lemma \ref{small_moment}, we focus on the remaining terms. 
Take $p=\frac{\alpha_k+\varepsilon}{\alpha_k+\varepsilon-1}$ and
 $q=\alpha_k+\varepsilon$ with
 $0<\varepsilon<\min\{\wt\alpha_j:j\vartriangleright k\}-\alpha_k$. Then since
\[
 p(\alpha_k-1)=\frac{\alpha_k+\varepsilon}{\alpha_k+\varepsilon-1}(\alpha_k-1)
 =\left(1+\frac{1}{\alpha_k+\varepsilon-1}\right)(\alpha_k-1)<\left(1+\frac{1}{\alpha_k-1}\right)(\alpha_k-1)=\alpha_k, 
\]
 the H\"{o}lder's inequality together with Lemma \ref{small_moment} yields
\[
 \E\left[W_{k,-1}^{\alpha_k-1}W_{j,-1}\right]\le\left(\E
 W_{k,-1}^{p(\alpha_k-1)}\right)^{1/p}\cdot\left(\E
 W_{j,-1}^{\alpha_k+\varepsilon}\right)^{1/q}<\infty.
\]
Similarly $\E [A_{kk,0}^{\alpha_k-1} B_{k,0}]<\infty$ and
 $\E[A_{kk,0}^{\alpha_k-1}A_{kj,0}]<\infty$
 hold and hence $I<\infty$. 

 By \cite[Theorem 2.3]{goldie:1991}, \eqref{k-tail}
 holds and it remains to prove that $C_k>0$.
 Since $W_{k,0}\ge A_{kk,0}W_{k,-1}+B_{k,0}$ holds from \eqref{limsup},
 $W_{k,0}^{\alpha_k}-(A_{kk,0}W_{k,-1})^{\alpha_k}$ is strictly positive
 on the set $\{B_{k,0}>0\}$ which has positive probability in view of
 condition (T-2). Therefore in \eqref{goldie:lemma} we obtain
 $I>0$. Condition (T-7) implies that $0<\E[A_{kk}^{\alpha_k}\log
 A_{kk}]<\infty$. Hence we see from \eqref{c_integral} that $C_k>0$. 
\end{proof}


\section{Case $\wt\alpha_k<\alpha_k$}
\label{sec:case2}
The situation is now quite the opposite. The auto-regressive behavior of
$W_k$ does not play any role since $k$ depends (in terms of Definition \ref{indirect_dep}) on coordinates which admit
dominant tails. More precisely, we prove that $W_k$ has a regularly
varying tail and its tail index is smaller than $\alpha_k$. It is equal
to the tail index $\alpha_{j_0}$ of the unique component $W_{j_0}$ such
that $\wt\alpha_k= \alpha_{j_0}(=\wt \alpha_{j_0})$. 
The latter is due to the formula 
\begin{equation}\label{eq:decomp}
 W_{\ell,0} = \pi_{\ell j_0}(s)W_{j_0,-s}+R_{\ell j_0}(s),
\end{equation}
and it is proved inductively for all $\ell$ such that $k \trianglelefteq \ell \vartriangleleft j_0$. The decomposition \eqref{eq:decomp} is the main goal in this section. We show that 
$R_{\ell j_0}(s)$ is negligible as $s\to\infty$, so that the
tail of $W_{\ell,0}$ comes from $\pi_{\ell j_0}(s)W_{j_0,-s}$. Moreover, we apply Breiman's lemma to $\pi_{\ell j_0}(s)W_{j_0,-s}$.
Then we also need to describe the limit behavior of $\E \pi_{\ell
j_0}(s)^{\alpha_{j_0}}$ as $s\to\infty$. 

To reach \eqref{eq:decomp}, we first prove that $W_{\ell,0}$ may be represented as in \eqref{eq:series} below. Then we divide the series
\eqref{eq:series} into two parts: the finite sum $Q_F(s)$ and the tail
$Q_T(s)$ of \eqref{series_division}. Next, we decompose $Q_F(s)$ into two
parts: $Q_W(s)$ containing $W$-terms and $Q_B(s)$ containing $B$-terms,
see \eqref{qs1} and \eqref{q_s2}. Finally, $Q_W(s)$ is split into
$Q_W'(s)$ and $Q_W''(s)$, where the former contains the components with
dominating tails, while the latter gathers those with lower-order tails.
This decomposition suffices to settle the induction basis in Lemma \ref{one_one}, since \eqref{d1} is satisfied if we set $R_{\ell j_0}(s)=Q''_W(s)+Q_B(s)+Q_T(s)$.
The three parts $Q_T(s),Q_B(s)$ and $Q_W''(s)$
are estimated in Lemmas \ref{q^s_step} and \ref{q-decomposition}.

For the induction step in Lemma \ref{one_many} we find $s_1,s_2\in\N$ such that $s=s_1+s_2$ and extract another term $Q_W^*(s_1,s_2)$ from $Q'_W(s_1)$, so that $Q'_W(s_1)=\pi_{\ell j_0}(s)W_{j_0,-s}+Q_W^*(s_1,s_2)$. Then we set
\[
R_{\ell j_0}(s)=Q''_W(s_1)+Q_B(s_1)+Q_T(s_1)+Q_W^*(s_1,s_2).
\]
Notice that the two definitions of $R_{\ell j_0}(s)$ above coincide, since $Q_W^*(s,0)=0$ in the framework of Lemma \ref{one_one}.
The term $Q_W^*(s_1,s_2)$ is estimated in the induction step in Lemma \ref{one_many}, where both $s_1$ and $s_2$ are required to be sufficiently large.
The limit behavior of $\E\pi_{\ell j_0}(s)^{\alpha_{j_0}}$ as $s\to\infty$ is given in Lemma
\ref{u:finite}.

\subsection{Decomposition of the stationary solution}\label{decomp}

We need to introduce some notation.
For the products of the diagonal entries of the matrix $\bfA$ we write 
\[
\Pi_{t,s}^{(i)}=
\begin{cases}
A_{ii,t}\cdots A_{ii,s},\qquad&t\ge s,\\
1,\qquad&t<s,
\end{cases}\quad i=1,\ldots,d; t,s\in\Z
\]
with a simplified form for $t=0$
\[
\Pi_n^{(i)}:=\Pi_{0,-n+1}^{(i)},\quad n\in\N.
\]
We also define the subset $H'_m(i,j)\subset H_m(i,j)$ as 
\begin{equation}
\label{hprim}
H'_m(i,j)=\{h\in H_m(i,j):h(1)\neq i\}, 
\end{equation}
namely $H_m'(i,j)$ is the subset of $H_m(i,j)$ such that the first two terms of its elements are not equal: $i=h(0)\neq h(1)$. The latter naturally yields another definition
\begin{equation}
\label{piprim}
 \pi'_{ij}(t,t-s):=\sum_{h\in H'_{s+1}(i,j)}\prod_{p=0}^s A_{h(p)h(p+1),t-p};\qquad \pi_{ij}'(s):=\pi'_{ij}(0,-s+1),
\end{equation}
which looks similar to $\pi_{ij}(t,t-s)$ and can be interpreted as an
entry of the product $\bfA^0_t\bfPi_{t-1,t-s}$, where $\bfA^0$ stands
for a matrix that has the same entries as $\bfA$ outside the diagonal and zeros on the diagonal.

Now we are going to formulate and prove the auxiliary results of this section. We start with representing $W_{\ell,t}$ as a series.
\begin{lemma}
 \label{series_rep}
Let $\bfW_t=(W_{1,t},...,W_{d,t})$ be the stationary solution of \eqref{sre}. Then
\begin{equation}
 \label{eq:series}
 W_{i,t}=\sum_{n=0}^\infty \Pi^{(i)}_{t,t-n+1}D_{i,t-n}\quad a.s.,\quad i=1,\ldots,d,
\end{equation}
where $D_{i,t-n}$ is as in \eqref{D-terms}.
\end{lemma}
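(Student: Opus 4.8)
The identity \eqref{coordinate}, $W_{i,t}=A_{ii,t}W_{i,t-1}+D_{i,t}$, is a one–dimensional affine recursion for the fixed coordinate $i$, driven by the nonnegative coefficient $A_{ii,t}$ and the nonnegative innovation $D_{i,t}$ of \eqref{D-terms}. Iterating it $N$ times gives, for every $N\in\N$,
\begin{equation}
\label{plan:iter}
W_{i,t}=\sum_{n=0}^{N-1}\Pi^{(i)}_{t,t-n+1}D_{i,t-n}+\Pi^{(i)}_{t,t-N+1}W_{i,t-N},
\end{equation}
so the statement reduces to showing that the remainder $\Pi^{(i)}_{t,t-N+1}W_{i,t-N}$ tends to $0$ a.s.\ as $N\to\infty$: since $W_{i,t}$ is a.s.\ finite by \eqref{series}, \eqref{plan:iter} then forces the partial sums to converge to $W_{i,t}$, which is \eqref{eq:series}.

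First I would fix $\beta\in(0,\wt\alpha_i\wedge 1)$, which is possible since $\wt\alpha_i>0$. Because $\E A_{ii}^{\alpha_i}=1$ by (T-4) while the law of $\log A_{ii}$ on $\{A_{ii}>0\}$ is non-arithmetic by (T-8), so that $A_{ii}$ is not a.s.\ constant, the map $\beta\mapsto\E A_{ii}^{\beta}$ is finite and strictly convex on $[0,\alpha_i]$, with value $1$ at $\beta=\alpha_i$ and limit $\le 1$ as $\beta\downarrow 0$; lying strictly below the chord joining these endpoints, it satisfies $\rho_i:=\E A_{ii}^{\beta}<1$ for our $\beta<\wt\alpha_i\le\alpha_i$. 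Moreover, by Lemma~\ref{small_moment}, $\E W_i^{\beta}<\infty$, and $\E W_j^{\beta}<\infty$ for every $j$ with $i\prec j$ because $\beta<\wt\alpha_i\le\wt\alpha_j$; combined with (T-5)--(T-6) and the independence of $(A_{ij,t},B_{i,t})$ from $W_{j,t-1}$, subadditivity of $x\mapsto x^{\beta}$ gives a finite bound $\E D_{i,t}^{\beta}=\E D_{i,0}^{\beta}<\infty$.

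The computation that makes everything work is a time-index bookkeeping: $\Pi^{(i)}_{t,t-N+1}=A_{ii,t}\cdots A_{ii,t-N+1}$ involves only the indices $\{t-N+1,\dots,t\}$, whereas $W_{i,t-N}$ and $D_{i,t-n}$ involve only indices $\le t-N$ and $\le t-n$ respectively; hence $\Pi^{(i)}_{t,t-N+1}$ is independent of $W_{i,t-N}$, and $\Pi^{(i)}_{t,t-n+1}$ is independent of $D_{i,t-n}$. Therefore $\E\big[(\Pi^{(i)}_{t,t-N+1}W_{i,t-N})^{\beta}\big]=\rho_i^{\,N}\E W_i^{\beta}$, which is summable in $N$; by Tonelli $\sum_{N\ge 1}(\Pi^{(i)}_{t,t-N+1}W_{i,t-N})^{\beta}<\infty$ a.s., so its general term vanishes a.s.\ and \eqref{eq:series} follows from \eqref{plan:iter}. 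The same bound applied to the summands, $\E[(\Pi^{(i)}_{t,t-n+1}D_{i,t-n})^{\beta}]\le\rho_i^{\,n}\E D_{i,0}^{\beta}$, also shows that the series on the right of \eqref{eq:series} converges absolutely a.s.

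\emph{Main obstacle.} The proof has no deep content, but two points need care: extracting a strictly contracting moment $\rho_i=\E A_{ii}^{\beta}<1$ at an exponent $\beta$ small enough that Lemma~\ref{small_moment} still provides $\E W_j^{\beta}<\infty$ for all $j\succ i$ — which is exactly why one relies on $\wt\alpha_i\le\wt\alpha_j$ — and tracking the disjointness of time indices that underlies the independences in the moment identity. If one preferred to bypass moment estimates on $D_{i,\cdot}$ entirely, an alternative is pathwise: $\tfrac1N\log\Pi^{(i)}_{t,t-N+1}\to\E\log A_{ii}<0$ by the strong law of large numbers (negativity of $\E\log A_{ii}$ again from strict Jensen, (T-4) and (T-8)), while $W_{i,t-N}$ is subexponential a.s.\ by Borel--Cantelli and $\E W_i^{\beta}<\infty$, so the product tends to $0$ a.s.
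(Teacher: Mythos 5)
Your proof is correct and takes a genuinely different (and arguably more elementary) route than the paper's. You iterate the scalar recursion $W_{i,t}=A_{ii,t}W_{i,t-1}+D_{i,t}$ a finite number of times and kill the remainder $\Pi^{(i)}_{t,t-N+1}W_{i,t-N}$ by a fractional-moment/Borel--Cantelli argument, using $\rho_i=\E A_{ii}^\beta<1$ (from strict convexity of $\beta\mapsto\E A_{ii}^\beta$, with strictness supplied by the non-arithmeticity (T-8)) and $\E W_i^\beta<\infty$ (Lemma~\ref{small_moment}); the identification with $W_{i,t}$ then comes for free from the finite expansion. The paper instead first shows that the infinite series defines a stationary process $\bfY_t$ solving \eqref{sre}, and then invokes uniqueness of the stationary solution via ergodicity of $(D_{i,t})$ and Brandt's theorem \cite{brandt:1986} to conclude $Y_{i,t}=W_{i,t}$ a.s. Your approach avoids appealing to Brandt and to ergodic theory, at the cost of a slightly more careful bookkeeping of the independence structure in the remainder term; both arguments are sound and both rest on Lemma~\ref{small_moment}. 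One minor point worth making explicit: your moment argument already shows the remainder vanishes a.s., and since all summands in \eqref{eq:series} are nonnegative this alone gives a.s.\ convergence of the series to $W_{i,t}$ — the separate absolute-convergence remark in your last sentence is a confirmation but not logically needed.
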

\begin{remark}
Representation \eqref{eq:series} allows us to
 exploit the fast decay of $\Pi^{(i)}_{t,t-n+1}$ as $n\to \infty$.
\end{remark}
\begin{proof}
First we show that for a fixed $\ell$ the series on the right hand side of \eqref{eq:series} converges.
For this we evaluate the moment of some order $\alpha$ from the range $0<\alpha<\wt \alpha_i$. Without loss of generality we let $\alpha<1$ and $t=0$. By Fubini's theorem 
\begin{align*}
 \E\left(\sum_{n=0}^\infty \Pi^{(i)}_{n} D_{i,-n}
\right)^\alpha
\le \sum_{n=0}^\infty \rho^n (dML+M)<\infty,
\end{align*}
where $L=\max\{\E W_j^\alpha:j\succ i\}$ and $M=\E B_i^\alpha\vee\max\{\E A_{ij}^\alpha:j\succ i\}$ are finite constants, and $\rho=\E A_{ii}^\alpha < 1$. Hence the series defined in \eqref{eq:series} converges.

Denote $Y_{i,t}:=\sum_{n=0}^\infty \Pi_{t,t-n+1}^{(i)}D_{i,t-n}$. We are going to show that the vector $\bfY_t=(Y_{1,t},\ldots,Y_{d,t})$ is a stationary solution to \eqref{sre}. Since stationarity is obvious, without loss of generality set $t=0$. For $i=1,\ldots,d$ we have
\begin{align}
\nonumber
 Y_{i,0}&=D_{i,0}+ \sum_{n=1}^\infty
 \Pi_{n}^{(i)} D_{i,-n} \\
\nonumber
 &= D_{i,0}+ A_{ii,0}\sum_{n=0}^\infty \Pi_{-1,-n}^{(i)} D_{i,-n-1}\\
\label{Y}
&=A_{ii,0}{Y_{i,-1}}+D_{i,0}.
\end{align}
The $d$ equations of the form \eqref{Y} (one for each $i$) can be written together as a matrix equation
\[
\bfY_0=\bfA_0\bfY_{-1}+\bfB_0,
\]
which is the special case of \eqref{sre}. It remains to prove that this implies that $Y_{i,0}=W_{i,0}$ a.s.

The series representation \eqref{series} allows to
 write $D_{i,t}$ as a measurable function of
 $\{(\bfA_s,\bfB_s):s<t\}$. Since the sequence
 $(\bfA_t,\bfB_t)_{t\in\Z}$ is i.i.d. and hence ergodic, it follows from
 Proposition 4.3 of \cite{krengel} that $(D_{i,t})_{t\in\Z}$ is also
 ergodic. Brandt \cite{brandt:1986} proved that then $Y_{i,t}$ defined in the paragraph above \eqref{Y} is the only proper solution to \eqref{eq:series} \cite[Theorem 1]{brandt:1986}. Therefore $Y_{i,t}=W_{i,t}$ a.s.
\end{proof}

Now we divide the series \eqref{eq:series} into the two parts: the finite sum $Q_F(s)$ of the $s$ first elements and the corresponding tail $Q_T(s)$:
\begin{equation}
\label{series_division}
 W_{\ell,0} =: \underbrace{\sum_{n=0}^{s-1}\Pi^{(\ell)}_{n}D_{\ell,-n}}_{Q_F(s)} + \underbrace{\sum_{n=s}^\infty
 \Pi^{(\ell)}_{n}D_{\ell,-n}}_{Q_T(s)}.
\end{equation}
For notational simplicity, we abbreviate coordinate number $\ell$ in $Q_F(s)$ and $Q_T(s)$. In the paper the appropriate coordinate is always denoted by $\ell$. The same applies for other parts $Q_W(s)$, $Q_B(s)$, $Q'_W(s)$, $Q''_W(s)$ and $Q_W^*(s_1,s_2)$ of $W_{\ell,0}$ which will be defined in the sequel.

 Both $Q_F(s)$ and $Q_T(s)$ have
 the same tail order for any $s$. However, if $s$ is large enough, one can prove that $Q_T(s)$ becomes negligible in the sense that the ratio of $Q_T(s)$ and $Q_F(s)$ tends to zero. The following lemma describes precisely that phenomenon. Recall
that $j_0$ is the unique index with the property 
$\wt\alpha_\ell=\alpha_{j_0}$ and so $\wt \alpha _j=\alpha _{j_0}$ for
all $j$ such that $\ell  \vartriangleleft j \trianglelefteq j_0 .$ 

\begin{lemma}
\label{q^s_step}
Assume that for any $j$ satisfying $j_0\trianglerighteq j\vartriangleright \ell$, 
\begin{equation}
\label{q^s_assumption}
\lim_{x\to\infty}x^{\alpha_{j_0}}\P(W_j>x)=C_j>0.
\end{equation}
Then for every $\varepsilon>0$ there exists $s_0=s_0(\ell,\varepsilon)$ such that 
\begin{equation}
\label{q^s_estimate}
\varlimsup_{x\to\infty}x^{\alpha_{j_0}}\P(Q_T(s)>x)<\varepsilon
\end{equation}
for any $s\ge s_0$.
\end{lemma}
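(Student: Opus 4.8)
The plan is to estimate each summand $\Pi^{(\ell)}_{n}D_{\ell,-n}$ of $Q_T(s)$ at the exact exponent $\alpha_{j_0}$, exploiting that $\Pi^{(\ell)}_{n}$ has $\alpha_{j_0}$-th moment decaying geometrically in $n$, and then to reassemble these estimates with geometrically decaying weights so that $\varlimsup_{x\to\infty}x^{\alpha_{j_0}}\P(Q_T(s)>x)$ becomes small in $s$.

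First I would record two preliminary facts. (i) $\rho:=\E A_{\ell\ell}^{\alpha_{j_0}}<1$: in the situation of the lemma $\wt\alpha_\ell=\alpha_{j_0}$, and $\ell\vartriangleleft j_0$ forces $\alpha_\ell>\wt\alpha_\ell=\alpha_{j_0}$; since $s\mapsto\E A_{\ell\ell}^{s}$ is convex on $[0,\alpha_\ell]$, equals $1$ at $0$ and at $\alpha_\ell$, and is strictly convex by (T-8), its value at the interior point $\alpha_{j_0}$ is $<1$. (ii) $C_D:=\sup_{z>0}z^{\alpha_{j_0}}\P(D_{\ell,0}>z)<\infty$, where $D_{\ell,0}=\sum_{j\succ\ell}A_{\ell j,0}W_{j,-1}+B_{\ell,0}$. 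For (ii) I would split the coordinates $j\succ\ell$ according to whether $\wt\alpha_j=\alpha_{j_0}$ or $\wt\alpha_j>\alpha_{j_0}$ (these are the only options, as $\wt\alpha_j\ge\wt\alpha_\ell=\alpha_{j_0}$). A $j$ of the first kind satisfies $j\trianglelefteq j_0$ (because $\wt\alpha_j=\min\{\alpha_{j'}:j\trianglelefteq j'\}$ and the $\alpha$'s are distinct), hence $j_0\trianglerighteq j\vartriangleright\ell$, so hypothesis \eqref{q^s_assumption} gives $\sup_w w^{\alpha_{j_0}}\P(W_j>w)<\infty$; combining this with $\E A_{\ell j}^{\alpha_{j_0}}<\infty$ (from (T-5)) and the independence of $A_{\ell j,0}$ and $W_{j,-1}$, a conditioning on $A_{\ell j,0}$ yields $\sup_z z^{\alpha_{j_0}}\P(A_{\ell j,0}W_{j,-1}>z)<\infty$. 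A $j$ of the second kind has $\E W_j^{\alpha_{j_0}+\delta}<\infty$ for small $\delta>0$ (Lemma \ref{small_moment}), so by independence and (T-5) also $\E(A_{\ell j,0}W_{j,-1})^{\alpha_{j_0}+\delta}<\infty$, and Markov's inequality makes this term $o(z^{-\alpha_{j_0}})$; similarly $\E B_\ell^{\alpha_\ell}<\infty$ with $\alpha_\ell>\alpha_{j_0}$ makes $B_{\ell,0}$ a $o(z^{-\alpha_{j_0}})$ term. A union bound over the finitely many terms then gives (ii).

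With these in hand, the core estimate goes as follows. For $n\ge s$, $\Pi^{(\ell)}_{n}=A_{\ell\ell,0}\cdots A_{\ell\ell,-n+1}$ depends only on $\bfA_0,\dots,\bfA_{-n+1}$, while $D_{\ell,-n}=\sum_{j\succ\ell}A_{\ell j,-n}W_{j,-n-1}+B_{\ell,-n}$ depends only on $(\bfA_t,\bfB_t)_{t\le -n}$; hence the two are independent, $\E[(\Pi^{(\ell)}_{n})^{\alpha_{j_0}}]=\rho^{n}$, and $D_{\ell,-n}\stackrel{d}=D_{\ell,0}$ by stationarity of $(\bfA_t,\bfB_t,\bfW_{t-1})_t$. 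Conditioning on $\Pi^{(\ell)}_{n}$ and using $\P(D_{\ell,0}>z)\le C_D z^{-\alpha_{j_0}}$ for all $z>0$ gives
\[
\P\big(\Pi^{(\ell)}_{n}D_{\ell,-n}>y\big)\le C_D\,\rho^{n}\,y^{-\alpha_{j_0}},\qquad y>0,\ n\ge s.
\]
Now fix $q\in(\rho^{1/\alpha_{j_0}},1)$, so that $\theta:=\rho q^{-\alpha_{j_0}}<1$. Since the summands of $Q_T(s)$ are nonnegative and $\sum_{n\ge s}(1-q)q^{\,n-s}=1$, one has $\{Q_T(s)>x\}\subseteq\bigcup_{n\ge s}\{\Pi^{(\ell)}_{n}D_{\ell,-n}>(1-q)q^{\,n-s}x\}$, so for all $x>0$
\[
x^{\alpha_{j_0}}\P(Q_T(s)>x)\le\sum_{n\ge s}C_D\,\rho^{n}\big((1-q)q^{\,n-s}\big)^{-\alpha_{j_0}}=\frac{C_D\,\rho^{\,s}}{(1-q)^{\alpha_{j_0}}(1-\theta)}.
\]
Letting $x\to\infty$ and then choosing $s_0=s_0(\ell,\varepsilon)$ so that the right-hand side is $<\varepsilon$ for $s\ge s_0$ (possible since $\rho<1$) yields \eqref{q^s_estimate}.

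The step I expect to demand the most care is (ii), i.e. obtaining a genuine $O(z^{-\alpha_{j_0}})$ upper tail bound for $D_{\ell,0}$ with a finite constant valid on all of $(0,\infty)$: this is where hypothesis \eqref{q^s_assumption} is used, and where one must correctly separate the coordinates $j\succ\ell$ into the critical ones (tail index exactly $\alpha_{j_0}$) and the sub-critical ones (a little extra moment available via Lemma \ref{small_moment}). Once (ii) and $\rho<1$ are secured, the union bound with geometric weights is routine.
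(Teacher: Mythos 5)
Your proof is correct and follows essentially the same route as the paper's: both arguments rest on (a) the exponential decay $\E\bigl(\Pi^{(\ell)}_{n}\bigr)^{\alpha_{j_0}}=\rho^n$ with $\rho<1$, (b) a split of the coordinates entering $D_{\ell,\cdot}$ into those with tail index exactly $\alpha_{j_0}$ (handled via hypothesis \eqref{q^s_assumption} and a conditioning/one-sided Breiman estimate) and those with strictly lighter tails (handled via Lemma~\ref{small_moment} and Markov), and (c) the geometric thinning $\sum_{n\ge s}(1-\gamma)\gamma^{n-s}=1$ to reassemble. The only difference is organizational: you first isolate a clean uniform bound $C_D=\sup_{z>0}z^{\alpha_{j_0}}\P(D_{\ell,0}>z)<\infty$ and then run the geometric sum once, whereas the paper interleaves the split of $D_{\ell,-n-s}$ into $D'$ and $D''$ with the geometric thinning, producing two sums $\mathrm{I}$ and $\mathrm{II}$; the mathematical content is identical.
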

\begin{proof}
For $\rho:=\E A_{\ell\ell}^{\alpha_{j_0}}<1$, we choose a constant
 $\gamma\in(0,1)$ such that
 $\rho\gamma^{-\alpha_{j_0}}=:\delta<1$. Notice that 
$ \E \big (\Pi^{(\ell)}_{n})^{\alpha_{j_0}}$ decays to $0$ exponentially fast, which is crucial in the following argument. We have
\begin{align*}
\P(Q_T(s)>x)=&\P\,\Big(\sum_{n=s}^\infty \Pi^{(\ell)}_{n} D_{\ell,-n}>x\Big)\\
\le&\sum_{n=s}^\infty\P\left(\Pi^{(\ell)}_{n} D_{\ell,-n}>x\cdot(1-\gamma)\gamma^{n-s}\right)\\
=&\sum_{n=0}^\infty\P\left(\Pi^{(\ell)}_{n+s}D_{\ell,-n-s}>x\cdot(1-\gamma)\gamma^n\right) \\
\le & \sum_{n=0}^\infty \P \left(\Pi^{(\ell)}_{n+s}
 D'_{\ell,-n-s}>x\cdot \frac{(1-\gamma)\gamma^{n}}{2}\right) (:=\mathrm{I})\\
& + \sum_{n=0}^\infty \P \left(\Pi^{(\ell)}_{n+s}
 D''_{\ell,-n-s}>x\cdot
 \frac{(1-\gamma)\gamma^{n}}{2}\right)(:=\mathrm{II}),  
\end{align*}
where we divided $D_{\ell,-n-s}=B_{\ell,-n-s}+\sum_{j\succ \ell}
 A_{\ell j,-n-s}W_{j,-n-s-1}$ into two parts,
\begin{equation}
\label{D-decomposition}
 D_{\ell,-n-s} = \underbrace{\sum_{j:\ell \prec j \trianglelefteq j_0}
 A_{\ell j,-n-s}W_{j,-n-s-1}}_{D'_{\ell,-n-s}} +
 \underbrace{\sum_{j:\ell \prec j \not\trianglelefteq j_0}
 A_{\ell j,-n-s}W_{j,-n-s-1}+B_{\ell,-n-s}}_{D''_{\ell,-n-s}}.
\end{equation}
The first part $D'_{\ell,-n-s}$ contains those components of $\bfW_{-n-s-1}$ which are dominant, while $D''_{\ell,-n-s}$ gathers the negligible parts: the components with lower-order tails and the $B$-term.
For the second sum $\mathrm{II}$, we have  $\E
 (D''_{\ell,-n-s})^{\alpha_{j_0}}<\infty$ by Lemma
 \ref{small_moment} and condition (T), so that Markov's
 inequality yields
\begin{align}
 x^{\alpha_{j_0}}\mathrm{II} \le \sum_{n=0}^\infty
 2^{\alpha_{j_0}}(1-\gamma)^{-\alpha_{j_0}} \gamma^{-n\alpha_{j_0}} \rho^{n+s} \E
 (D''_{\ell,-n-s})^{\alpha_{j_0}}
\le c\, \rho^{s} \sum_{n=0}^\infty \delta^n < \infty. \label{ineq:sumII}
\end{align}
For the first sum $\mathrm{I}$, we use conditioning in the following
 way.
\begin{align*}
 x^{\alpha_{j_0}} \mathrm{I} &\le \sum_{n=0}^\infty \sum_{j:\ell \prec j \trianglelefteq j_0} x^{\alpha_{j_0}} \P 
\left(
\Pi^{(\ell)}_{n+s}A_{\ell j,-n-s}W_{j,-n-s-1}>\frac{x\cdot(1-\gamma)\gamma^n}{2d}
\right) \\
& = \sum_{n=0}^\infty \sum_{j:\ell \prec j \trianglelefteq j_0} \E \big[
 x^{\alpha_{j_0}} \P\,\big(G_n W_{j,-n-s-1}>x \mid G_n \big)
\big],
\end{align*}
where
 $G_n=\Pi^{(\ell)}_{n+s}A_{\ell j,-n-s}(1-\gamma)^{-1}\gamma^{-n}\cdot
 2d$. Notice that $G_n$ and $W_{j,-n-s-1}$ are independent and
\[
 \E G_n^{\alpha_{j_0}}\le c_\ell\cdot \rho^{n+s}((1-\gamma)\gamma^n)^{-\alpha_{j_0}}(2d)^{\alpha_{j_0}},
\]
where $c_\ell = \max_{j}\{\E A_{\ell j}^{\alpha_{j_0}}: \ell \prec j \trianglelefteq j_0 \}$. Recall that by assumption \eqref{q^s_assumption}, there is a constant $c_j$ such that for every $x>0$ 
\[
 \P(W_j>x) \le c_j x^{-\alpha_{j_0}}.
\]
Therefore, recalling $\rho\gamma^{-\alpha_{j_0}}=:\delta$, we further obtain
\begin{align}
 x^{\alpha_{j_0}}\mathrm{I} \le \sum_{n=0}^\infty \sum_{j:\ell \prec j \trianglelefteq j_0 } c_j\cdot c_\ell\cdot
 \rho^{n+s}((1-\gamma)\gamma^n )^{-\alpha_{j_0}} (2d)^{\alpha_{j_0}}
 \le c' \cdot \rho^{s} \sum_{n=0}^\infty \delta^n \label{ineq:sumI}
\end{align}
with $c'=d\cdot c_\ell \cdot \max\{c_j:\ell \prec j
 \trianglelefteq j_0 \}\cdot(1-\gamma)^{-\alpha_{j_0}}(2d)^{\alpha_{j_0}}$. Now in view of \eqref{ineq:sumII}
 and \eqref{ineq:sumI}, since $\rho<1$, there is $s_0$ such that 
\[
\varlimsup_{x\to\infty}x^{\alpha_{j_0}}\P(Q_T(s)>x)<\varepsilon
\]
for $s>s_0$. 
\end{proof}

The dominating term $Q_F(s)$ of \eqref{series_division} can be further decomposed. 
\begin{lemma}
\label{q-decomposition}
For any $\ell \le d$ and $s\in\N$, the sum $Q_F(s)$ admits the decomposition
\begin{equation}
\label{decomposition}
Q_F(s)=Q_W(s)+Q_B(s)\quad a.s.
\end{equation}
where
\begin{align}
\label{qs1}
Q_W(s) &=\sum\limits_{j:j\vartriangleright \ell}\pi_{\ell j}(s)W_{j,-s}, \\
\label{q_s2}
Q_B(s)&=
\sum_{n=0}^{s-1}\Pi_{n}^{(\ell)}\Bigg(B_{\ell,-n}+\sum_{j:j\vartriangleright
 \ell}
\sum_{m=1}^{s-n-1}\pi_{\ell j}(-n,-n-m+1)B_{j,-n-m}\Bigg).
\end{align}
Moreover 
\begin{equation}
\label{qs2}
\E Q_B(s)^{\wt\alpha_\ell}<\infty.
\end{equation}
\end{lemma}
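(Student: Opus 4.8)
The plan is to derive \eqref{decomposition} by unrolling the recursion \eqref{sre} exactly $s$ steps and then regrouping the resulting finite sum of matrix--path weights according to the first time a trajectory leaves coordinate $\ell$; once $Q_B(s)$ is identified, \eqref{qs2} will follow routinely from condition (T) and independence. Every sum below is finite, so no convergence question arises and all identities hold a.s.\ because $\bfW_t$ is a well defined stationary solution of \eqref{sre}.

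First I would iterate $\bfW_0=\bfA_0\bfW_{-1}+\bfB_0$ down to time $-s$, obtaining $\bfW_0=\bfPi_s\bfW_{-s}+\sum_{n=0}^{s-1}\bfPi_n\bfB_{-n}$, and read off the $\ell$-th coordinate:
\[
W_{\ell,0}=\sum_{j}\pi_{\ell j}(s)\,W_{j,-s}+\sum_{n=0}^{s-1}\sum_{j}\pi_{\ell j}(n)\,B_{j,-n}.
\]
Since $\bfA$ is upper triangular, $\pi_{\ell\ell}(s)=\Pi^{(\ell)}_s$ while $\pi_{\ell j}(s)$ vanishes a.s.\ unless $\ell\trianglelefteq j$, so the first sum equals $\Pi^{(\ell)}_s W_{\ell,-s}+Q_W(s)$ with $Q_W(s)$ as in \eqref{qs1}. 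Multiplying \eqref{eq:series} (taken at time $-s$) by $\Pi^{(\ell)}_s$ and using $\Pi^{(\ell)}_s\Pi^{(\ell)}_{-s,-n+1}=\Pi^{(\ell)}_n$ for $n\ge s$ gives $\Pi^{(\ell)}_s W_{\ell,-s}=\sum_{n\ge s}\Pi^{(\ell)}_n D_{\ell,-n}=Q_T(s)$, so by \eqref{series_division}
\[
Q_F(s)=W_{\ell,0}-Q_T(s)=Q_W(s)+\sum_{n=0}^{s-1}\sum_{j}\pi_{\ell j}(n)\,B_{j,-n},
\]
and it remains to identify the last double sum with $Q_B(s)$.

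This identification is the combinatorial core of the proof and the only real obstacle I foresee. Using \eqref{pi}, I would split each path $h\in H_n(\ell,j)$ with $j\neq\ell$ according to the first index $n'$ at which it leaves $\ell$, that is $h(0)=\cdots=h(n')=\ell\neq h(n'+1)$: its weight factors as $\Pi^{(\ell)}_{n'}$ times the weight of a length-$(n-n')$ path counted by the quantity $\pi'_{\ell j}(-n',-n+1)$ of \eqref{piprim}. Summing over $n'$ and over such tails gives $\pi_{\ell j}(n)=\sum_{n'=0}^{n-1}\Pi^{(\ell)}_{n'}\,\pi'_{\ell j}(-n',-n+1)$ for $j\vartriangleright\ell$, whereas the all-$\ell$ paths give $\pi_{\ell\ell}(n)=\Pi^{(\ell)}_n$. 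Plugging this into $\sum_{n=0}^{s-1}\sum_j\pi_{\ell j}(n)B_{j,-n}$, separating $j=\ell$ from $j\vartriangleright\ell$, and re-indexing the latter part by $m=n-n'$ (so that the $B$-term becomes $B_{j,-n-m}$ after renaming $n'\mapsto n$, with $1\le m\le s-n-1$) reproduces $Q_B(s)$ exactly as in \eqref{q_s2}, with the $\pi'_{\ell j}$ of \eqref{piprim} as coefficients. The only delicate point is keeping the index ranges of $n,n',m$ straight.

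Finally, for \eqref{qs2} I would work from the equivalent form $Q_B(s)=\sum_{n=0}^{s-1}\sum_j\pi_{\ell j}(n)B_{j,-n}$ obtained above. It is a finite sum; in each summand $\pi_{\ell j}(n)$ is a function of $\bfA_0,\dots,\bfA_{-n+1}$ and hence independent of $B_{j,-n}$, one has $\E\pi_{\ell j}(n)^{\wt\alpha_\ell}<\infty$ by (T-5) (as noted after \eqref{pi}), and $\E B_j^{\wt\alpha_\ell}<\infty$ because the only summands that are not a.s.\ zero have $\ell\trianglelefteq j$, whence $\wt\alpha_\ell\le\alpha_j$ by \eqref{triangle} and (T-6) applies. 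Thus $\E[(\pi_{\ell j}(n)B_{j,-n})^{\wt\alpha_\ell}]=\E\pi_{\ell j}(n)^{\wt\alpha_\ell}\cdot\E B_j^{\wt\alpha_\ell}<\infty$ for every summand, and summing the finitely many of them --- directly when $\wt\alpha_\ell\le1$ and via Minkowski's inequality when $\wt\alpha_\ell>1$ --- yields $\E Q_B(s)^{\wt\alpha_\ell}<\infty$.
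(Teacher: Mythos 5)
Your proof is correct but follows a genuinely different route from the paper's. The paper expands $Q_F(s)=\sum_{n<s}\Pi^{(\ell)}_n D_{\ell,-n}$ directly: it iterates the coordinate-wise SRE \eqref{coordinateSRE} inside each $D_{\ell,-n}$ so as to push every $W$-term down to time $-s$, collecting the leftover $B$-terms step by step and producing the pair $(\eta_n,\zeta_n)$ for each $n$. You instead unroll the full vector recursion $s$ steps, read off coordinate $\ell$, and observe that $\Pi^{(\ell)}_s W_{\ell,-s}=Q_T(s)$ --- a clean cancellation that immediately identifies $Q_F(s)$ as the remainder. You then obtain the shape of $Q_B(s)$ by decomposing each path weight $\pi_{\ell j}(n)$ according to its first exit time from the coordinate $\ell$, i.e.\ $\pi_{\ell j}(n)=\sum_{n'=0}^{n-1}\Pi^{(\ell)}_{n'}\pi'_{\ell j}(-n',-n+1)$, and reindexing by $m=n-n'$. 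Your version is more modular: the iterative unwinding, the cancellation, and the combinatorial identification are isolated as separate moves, whereas the paper intertwines them in a single term-by-term computation. The moment bound \eqref{qs2} is essentially the same in both, resting on (T-5)--(T-6), $\wt\alpha_\ell\le\alpha_j$ for $\ell\trianglelefteq j$, and Jensen/Minkowski depending on whether $\wt\alpha_\ell\le 1$. A small but correct observation you make implicitly: the lemma statement writes $\pi_{\ell j}$ in \eqref{q_s2}, but both the paper's proof (cf.\ \eqref{zetan}) and yours produce $\pi'_{\ell j}$; the statement appears to contain a typo, and your formula with $\pi'_{\ell j}$ is the consistent one.
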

\begin{remark}
Each ingredient $D_{\ell , -n}=\sum_{j:j\succ \ell}A_{\ell
       j,-n}W_{j,-n-1}+B_{\ell,-n}$ of $Q_F(s)$ in \eqref{series_division} contains several
       $W$-terms with index $-n-1$ and a $B$-term with time index $-n$ $($see \eqref{D-terms}$)$. 
The idea is to change $W_{\cdot,-n-1}$ terms into $W_{\cdot,-s}$ by the iterative use of the recursion \eqref{sre} componentwise. Meanwhile, some additional $B$-terms, with time indices between $-n$ and
       $-s+1$, are produced. When all $W$-terms have the same time index
       $-s$, we gather all ingredients containing a $W$-term in
       $Q_W(s)$, while the remaining part $Q_B(s)$ consists of all
       ingredients containing a $B$-term. The quantity $Q_W(s)$ is easily treatable
       and $Q_B(s)$ is proved to be negligible in the tail (comparing to $Q_W(s)$, for which the lower bound is shown in the proof of Theorem \ref{main}).
\end{remark}
\begin{proof}
{\bf Step 1. Decomposition of $Q_F(s)$.}\par
In view of \eqref{series_division} and \eqref{coordinate} we have
\begin{align}
\label{qs:series}
Q_F(s)=\sum_{n=0}^{s-1}\Pi_{n}^{(\ell)}D_{\ell,-n}
=\sum_{n=0}^{s-1} \underbrace{\Pi_{n}^{(\ell)}\Big(\sum_{i:i\succ
 \ell}A_{\ell
 i,-n}W_{i,-n-1}+B_{\ell,-n}\Big)}_{\xi_n}:=\sum_{n=0}^{s-1}
 \xi_n. 
\end{align}
 We will analyze each ingredient $\xi_n$ of the last sum. We divide it into two parts, $\eta_n$ and $\zeta_n$, starting from $n=s-1$.
\begin{align*}
\xi_{s-1} &= \underbrace{\Pi_{s-1}^{(\ell)}\sum_{j:j\succ
 \ell}A_{\ell j,-s+1}W_{j,-s}}_{\eta_{s-1}}+\underbrace{\Pi^{(\ell)}_{s-1}B_{\ell,-s+1}\vphantom{\sum_j}}_{\zeta_{s-1}}
=\eta_{s-1}+\zeta_{s-1}.
\end{align*}
Next, for $n=s-2$ applying component-wise SRE \eqref{coordinateSRE}, we obtain
\begin{align*}
\xi_{s-2}&=\Pi_{s-2}^{(\ell)}\sum_{j:j\succ \ell}A_{\ell
 j,-s+2}\Big(\sum_{i:i\succcurlyeq j}A_{ji,-s+1}W_{i,-s}
+B_{j,-s+1}\Big)+\Pi^{(\ell)}_{s-2}B_{\ell,-s+2}\\
&=\underbrace{\Pi_{s-2}^{(\ell)}\sum_{j:j\succ
 \ell}A_{\ell j,-s+2}\sum_{i:i\succcurlyeq
 j}A_{ji,-s+1}W_{i,-s}}_{\eta_{s-2}}+
\underbrace{\Pi_{s-2}^{(\ell)}\Big(B_{\ell,-s+2}+\sum_{j:j\succ
 \ell}A_{\ell j,-s+2}B_{j,-s+1}\Big)}_{\zeta_{s-2}}=\eta_{s-2}+\zeta_{s-2}.
\end{align*}
In this way, for $n<s$ we define $\eta_n$ which consists of terms including
 $(W_{j,-s})$ and $\zeta_n$ which contains terms 
 $(B_{j,-i})$. Observe that in most $\eta$'s and
 $\zeta$'s, a multiple summation appears,
 which is not convenient. To write them in simpler forms, we are going
 to use the notation \eqref{hprim}.
This yields 
\begin{align}
\nonumber
\eta_{s-2}&=\Pi_{s-2}^{(\ell)}\sum_{i:i\vartriangleright
 \ell}\sum_{j:i\succcurlyeq j\succ \ell}A_{\ell j,-s+2}A_{ji,-s+1}W_{i,-s} 
\nonumber \\
&=\Pi_{s-2}^{(\ell)}\sum_{i:i\vartriangleright \ell}\sum_{h\in
 H'_2(\ell,i)}
\Big(\prod_{p=0}^1 A_{h(p)h(p+1),-s+2-p}\Big)W_{i,-s}\nonumber\\
&=\Pi_{s-2}^{(\ell)}\sum_{i:i\vartriangleright
 \ell}\pi'_{\ell
 i}(-s+2,-s+1)W_{i,-s}. 
\label{eta1}
\end{align}
For each $\eta_n$ we obtain an expression of a simple form similar to \eqref{eta1}.
To confirm this, let us return to the decomposition of $\xi$ and see one more step of the iteration for $\eta$. For $n=s-2$ we have 
\begin{align}
\xi_{s-2}
&=\Pi_{s-3}^{(\ell)}\Bigg[\sum_{j:j\succ \ell}A_{\ell
 j,-s+3}\Big\{\sum_{i:i\succcurlyeq j}A_{ji,-s+2}
\Big(\sum_{u:u\succcurlyeq i}A_{iu,-s+1}W_{u,-s}+B_{i,-s+1}\Big)+B_{j,-s+2}\Big\}+B_{\ell,-s+3}\Bigg],\label{xi2}
\end{align}
so that similarly to the case $n=s-2$, we obtain the expression 
\begin{align}
\nonumber
\eta_{s-3}&=\Pi_{s-3}^{(\ell)}\sum_{j:j\succ \ell}A_{\ell j,-s+3}
\sum_{i:i\succcurlyeq j}A_{ji,-s+2}\sum_{u:u\succcurlyeq i}A_{iu,-s+1}W_{u,-s}\\
\nonumber
&=\Pi^{(\ell)}_{s-3}\sum_{u: u\vartriangleright \ell}\sum_{
i,j:u\succcurlyeq i\succcurlyeq j\succ \ell}
A_{\ell j,-s+3}A_{ji,-s+2}A_{iu,-s+1}W_{u,-s}\\
&=\Pi_{s-3}^{(\ell)}\sum_{u:u\vartriangleright \ell}\sum_{h\in H'_3(\ell,u)}
\Big(\prod_{p=0}^2 A_{h(p)h(p+1),-s+3-p}\Big)W_{u,-s}\nonumber\\
\label{eta2}
&=\Pi_{s-3}^{(\ell)}\sum_{u:u\vartriangleright \ell}\pi'_{\ell u}(-s+3,-s+1)W_{u,-s}.
\end{align}
Similarly for any $0\le
 n\le s-1$ we obtain inductively 
\begin{equation}
\label{etan}
\eta_n=\Pi_{n}^{(\ell)}\sum_{u:u\vartriangleright \ell}\pi'_{\ell u}(-n,-s+1)W_{u,-s}. 
\end{equation}
The expression for $\zeta_n$ is similar but slightly different. Let us
 write it for $n=s-3$. From \eqref{xi2} we infer 
\begin{align}
\nonumber
\zeta_{s-3}
&=\Pi_{s-3}^{(\ell)}\Bigg(B_{\ell,-s+3}+\sum_{j:j\succ
 \ell}A_{\ell j,-s+3}B_{j,-s+2}+\sum_{j:j\succ \ell}A_{\ell j,-s+3}\sum_{i:i\succcurlyeq j}A_{ji,-s+2}B_{i,-s+1}\Bigg)\\
\label{zeta2}
&=\Pi_{s-3}^{(\ell)}\Bigg(B_{\ell,-s+3}+\sum_{j:j\vartriangleright
 \ell}\pi'_{\ell j}(-s+3,-s+3)B_{j,-s+2}+\sum_{i:i\vartriangleright
 \ell}\pi'_{\ell i}(-s+3,-s+2)B_{i,-s+1}\Bigg) 
\end{align} 
and the general formula for $\zeta_n$ is: 
\begin{equation}
\label{zetan}
\zeta_n=\Pi_{n}^{(\ell)}\Bigg(B_{\ell,-n}+\sum_{j:j\vartriangleright
\ell}\sum_{m=1}^{s-1-n}\pi'_{\ell j}(-n,-n-m+1)B_{j,-n-m}\Bigg).
\end{equation}
Therefore,
\[
Q_F(s)= \sum_{n=0}^{s-1} \xi_n = \underbrace{\sum_{n=0}^{s-1} \eta_n}_{Q_W(s)}+ \underbrace{\sum_{n=0}^{s-1} \zeta_n}_{Q_B(s)},
\]
where
\begin{align*}
Q_W(s)&=\sum_{n=0}^{s-1}\Pi_{n}^{(\ell)}\sum_{j:j\vartriangleright
 \ell}\pi'_{\ell j}(-n,-s+1)W_{j,-s}.
\end{align*}
Finally to obtain \eqref{qs1},
we use \eqref{piprim} and the identity:
\[
\Pi_{n}^{(\ell)}\prod_{p=0}^{s-1-n} A_{h'(p)h'(p+1),-n-p}=\prod_{p=0}^{s-1} A_{h(p)h(p+1),-p}
\]
for $n<s$, $h'\in H'_{s-n}(\ell,j)$ and $h\in H_{s}(\ell,j)$ such that $h(0)=h(1)=\ldots=h(n)=\ell$ and
 $h(n+p)=h'(p)$ for $p=0,\ldots,s-1-n$.
Each element $h\in H_{s}(\ell,j)$ is uniquely represented in this way.

\noindent
{\bf Step 2. Estimation of $Q_B(s)$.}\par
Recall that $\wt\alpha_\ell
 \le\alpha_j$ for $j \vartriangleright \ell$. Hence 
$\E A_{ij}^{\wt\alpha_\ell}<\infty$ and $\E
 B_i^{\wt\alpha_\ell}<\infty$ for any $i,j
 \trianglerighteq \ell$. We have
\begin{equation}\label{bmoment}
\E
 Q_B(s)^{\wt\alpha_\ell}=\E\Bigg\{\sum_{n=0}^{s-1}\Pi_{n}^{(\ell)}\Big(B_{\ell,-n} + 
\sum_{j:j\vartriangleright\ell}
\sum_{m=1}^{s-1-n}\pi'_{\ell j}(-n,-n-m+1)B_{j,-n-m}\Big)\Bigg\}^{\wt\alpha_\ell}.
\end{equation}
Due to Minkowski's inequality for $\wt\alpha_\ell>1$ and Jensen's
 inequality for $\wt\alpha_\ell \le 1$, \eqref{bmoment} is bounded by finite
 combinations of the quantities
\[
 \E\pi'_{\ell j}(-n,-n-m+1)^{\wt\alpha_\ell}<\infty\quad \textrm{and} \quad \E B_{j,-n-m}^{\wt\alpha_\ell}<\infty
\]
for $0\le n \le s-1$, $j:j\trianglerighteq \ell$ and
 $1\le m\le s-1-n$. 
Here we recall that $\pi'_{\ell j}(-n,-n-m+1)$ is a
 polynomial of $A_{ij},\,i,j \trianglerighteq \ell$ (see
 \eqref{piprim}).
Thus $\E Q_B(s)^{\wt \alpha_\ell}<\infty$.
\end{proof}

\begin{example}
 In order to grasp the intuition of decomposition \eqref{decomposition}, we consider the
 case $d=2$ and let 
\[
\bfA_t=
\left(\begin{array}{cc}
A_{11,t}&A_{12,t}\\
0&A_{22,t}
\end{array}\right).
\]
Then applying the recursions $W_{2,r}=A_{22,r}W_{2,r-1}+B_{2,r},\ -s
 \le r \le 1$ to the quantity $Q_F(s)$ of \eqref{qs:series}, we obtain 
\begin{align*}
Q_F(s)=&\sum_{n=0}^{s-1}\Pi^{(1)}_{n}D_{1,-n}\\
=&\sum_{n=0}^{s-1}\Pi^{(1)}_{n}(A_{12,-n}W_{2,-n-1}+B_{1,-n})\\
=&\sum_{n=0}^{s-1}\Pi^{(1)}_{n}A_{12,-n}\Pi^{(2)}_{-n-1,-s+1}W_{2,-s}\\
&+\sum_{n=0}^{s-1}\Pi^{(1)}_{n}\Big(B_{1,-n}+\sum_{m=1}^{s-1-n} 
A_{12,-n}\Pi^{(2)}_{-n-1,-n+1-m}B_{2,-n-m}\Big),
\end{align*}
where we recall that we use the convention $\Pi^{(\cdot)}_{-n-1,-n}=1$. 

Let us focus on the first sum in the last expression, which is equal
to $Q_W(s)$. 
Each term of this sum contains a product of $s+1$ factors of the form
 $A_{11,\cdot},\,A_{12,\cdot}$ or $A_{22,\cdot}$. Each product is
 completely characterized by the nondecreasing sequence
 $(h(i))_{i=0}^{s}$ of natural numbers with $h(0)=1$ and
 $h(s)=2$. If $h(i)=1$ for $i\le q$ and $h(i)=2$ for $i>q$, then there are $q$ factors of the form
 $A_{11,\cdot}$ in front of $A_{h(q)h(q+1),-q}=A_{12,-q}$, and $s-1-q$ factors of the form $A_{22,\cdot}$
 behind. All such sequences constitute $H_{s}(1
 ,2)$ of
 Definition \ref{sequences}. Thus we can write 
\[
Q_W(s)=\underbrace{\sum_{h\in H_{s+1}(1,2)}\prod_{p=0}^{s-1} A_{h(p)h(p+1),-p}}_{\pi_{1,2}(s)}W_{2,-s}.
\]
The second sum, which corresponds to $Q_B(s)$, has another sum of the
products in the $n$-th term. All terms in these secondary sums of $m$ are
starting with $A_{12,0}$ and then have a product of $A_{22,\cdot}$ until we reach $B_{\cdot,\cdot}$. Each
of products is again characterized by a nondecreasing sequence $(h(i))_{i=0}^m$
 such that $h(0)=1,\,h(m)=2$ and $h(1)\neq 1$, because there is just one
 such sequence for each $m$. Thus we can use $H_m'(1,2)$ of Definition
 \ref{sequences} and write 
\[
Q_B(s)=\sum_{n=0}^{s-1}\Pi^{(1)}_{n}\Big(B_{1,-n}+
\sum_{m=1}^{s-1-n}\underbrace{\sum_{h\in
H'_m(1,2)}
\prod_{p=0}^{m-1} A_{h(p)h(p+1),-n-p}}_{\pi'_{12}(-n,-n-m+1)}B_{2,-n-m}\Big).
\]
\end{example}

\subsection{The dominant term}\label{dominant}
Lemmas \ref{q-decomposition} and \ref{q^s_step} imply that 
the tail of $W_{\ell,0}$ is determined by   
\[
 Q_W(s) =\sum\limits_{j:j\vartriangleright \ell}\pi_{\ell j}(s)W_{j,-s}
\]
in \eqref{qs1}. In the subsequent Lemmas (Lemmas \ref{one_many} and \ref{one_one}) we
will apply the recurrence to $W_{j,-s},\ j\vartriangleleft j_0$ until
they reach $W_{j_0,t}$ for some $t<s$. Those $W_{j,-s}$ could survive as the dominant
terms, and terms $W_{j,-s},\ j\not\trianglelefteq j_0$ are proved to be negligible. In these steps the behaviors of coefficients for all $W_{j,-s}$ are inevitable.

The following property is crucial in subsequent steps,
particularly in the main proof.
\begin{lemma}
\label{u:finite}
Let $\alpha_i>\wt \alpha_i=\alpha_{j_0}$ for some
 $j_0\vartriangleright i$ and 
$u_i (s):=\E \pi_{i j_0}(s)^{\alpha_{j_0}}$. 
Then, the limit $u_i=\lim_{s\to\infty} u_i (s)$ exists, and it is finite and strictly positive.
\end{lemma}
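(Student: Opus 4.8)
The quantity $u_i(s)=\E\pi_{ij_0}(s)^{\alpha_{j_0}}$ is a sum over paths, and the plan is to exhibit it as the tail of a renewal-type quantity whose monotone limit can be identified. Recall from \eqref{pi} that
\[
\pi_{ij_0}(s)=\sum_{h\in H_s(i,j_0)}\prod_{p=0}^{s-1}A_{h(p)h(p+1),-p}.
\]
The first thing I would do is split off the leading diagonal block: every $h\in H_s(i,j_0)$ has $h(p)=i$ for $p\le q$ and then jumps off the diagonal at step $q$, so conditioning on the first off-diagonal time $q$ gives a decomposition
\[
\pi_{ij_0}(s)=\sum_{q=0}^{s-1}\Pi_q^{(i)}\,\pi'_{ij_0}(-q,-s+1),
\]
in the notation of \eqref{piprim}, where $\pi'_{ij_0}$ starts with a genuine off-diagonal factor. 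Since $\Pi_q^{(i)}$ is built from $A_{ii,0},\dots,A_{ii,-q+1}$ and is independent of $\pi'_{ij_0}(-q,-s+1)$, and since $\E A_{ii}^{\alpha_{j_0}}=:\rho<1$ (because $\alpha_{j_0}=\wt\alpha_i<\alpha_i$ and $s\mapsto\E A_{ii}^s$ is log-convex with value $1$ at $\alpha_i$), we get
\[
u_i(s)=\sum_{q=0}^{s-1}\rho^{q}\,\E\big[\pi'_{ij_0}(-q,-s+1)^{\alpha_{j_0}}\big].
\]

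The key step is to recognize $u_i(s)$ as a nondecreasing sequence and to produce a uniform-in-$s$ upper bound. Monotonicity is immediate: $H_s(i,j_0)\subset H_{s+1}(i,j_0)$ under the injection $h\mapsto(i,h(0),h(1),\dots)$ that prepends one more diagonal step at the top, and all summands are nonnegative, so $u_i(s)\le u_i(s+1)$. For the upper bound I would combine the path-counting estimate from the proof of Lemma \ref{small_moment} — the number of elements of $H_m(i,j_0)$ is at most $d\,m^d$ — with the moment bounds $\E A_{k\ell}^{\alpha_{j_0}}\le M$ for $i\trianglelefteq k,\ell\trianglelefteq j_0$ (valid by (T-5) since $\alpha_{j_0}=\wt\alpha_i\le\wt\alpha_k\le\alpha_k$ there) and the exponential decay $\rho^q$ from the diagonal prefix. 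Using Minkowski (if $\alpha_{j_0}>1$) or subadditivity of $t\mapsto t^{\alpha_{j_0}}$ (if $\alpha_{j_0}\le1$) to pass the $\alpha_{j_0}$-th power inside the finite sum over $H_s(i,j_0)$, one bounds
\[
u_i(s)\le \sum_{m=j_0-i}^{\infty} (d\,m^d)\,M^{\,m+1}\,\rho^{\,m-(j_0-i)}\cdot(\text{const}),
\]
which converges once one checks $M^{m}\rho^{m}$ decays — and here one must be a little careful: $M$ need not be $<1$, so the naive bound above is not quite right. The fix is that in each path the number of \emph{off-diagonal} factors is at most $j_0-i$ (a fixed constant independent of $m$), so only at most $j_0-i$ factors contribute an $M$ and the remaining $m-(j_0-i)$ factors are diagonal and contribute $\rho<1$; this is exactly the bookkeeping already carried out in \eqref{ineq:Sin}. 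Hence $u_i(s)\le C\sum_m m^d\rho^m<\infty$ uniformly in $s$, and by monotone convergence $u_i:=\lim_s u_i(s)$ exists and is finite.

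Finally, strict positivity: since $j_0\vartriangleright i$ there is $s$ with $H_s(i,j_0)\ne\emptyset$ (any $s\ge j_0-i$ works, picking a chain $i\preccurlyeq i(1)\preccurlyeq\cdots\preccurlyeq j_0$ and padding with diagonal steps), and for such a path all factors $A_{h(p)h(p+1)}$ are positive in the sense of \eqref{positive}, so $\P(\pi_{ij_0}(s)>0)>0$, whence $u_i(s)=\E\pi_{ij_0}(s)^{\alpha_{j_0}}>0$; by monotonicity $u_i\ge u_i(s)>0$. The main obstacle is the uniform bound — specifically, making sure the exponential gain $\rho^m$ from the diagonal blocks genuinely dominates the (only polynomially many, and only $j_0-i$-fold) off-diagonal contributions — but this is precisely the combinatorial estimate already established in Lemma \ref{small_moment}, so it should be a matter of reusing that argument rather than inventing a new one.
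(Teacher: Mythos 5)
Your overall plan (monotonicity plus a uniform upper bound plus positivity) is the right skeleton, and your positivity argument is correct. But both of the two main steps contain genuine errors, and both errors stem from the same overlooked point: the special role of $A_{j_0j_0}$, for which $\E A_{j_0j_0}^{\alpha_{j_0}}=1$ exactly, not $<1$.

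\textbf{Monotonicity.} Your injection $h\mapsto(i,h(0),h(1),\dots)$ prepends a diagonal step $i\preccurlyeq i$ at the \emph{top} of the path. The product corresponding to the injected sequence is $A_{ii,0}$ times a time-shifted copy of the original product, so what you can extract from it is $u_i(s+1)\ge\E A_{ii,0}^{\alpha_{j_0}}\cdot u_i(s)=\rho\,u_i(s)$ with $\rho=\E A_{ii}^{\alpha_{j_0}}<1$; that is strictly weaker than monotonicity. (It is also not a pointwise inequality between $\pi_{ij_0}(s)$ and $\pi_{ij_0}(s+1)$: the time labels shift.) The factor you want to tack on must have $\alpha_{j_0}$-moment equal to $1$, which forces you to append a diagonal step at the \emph{bottom}: $\pi_{ij_0}(s+1)=\sum_{j}\pi_{ij}(s)A_{jj_0,-s}\ge\pi_{ij_0}(s)A_{j_0j_0,-s}$, and then $u_i(s+1)\ge u_i(s)\cdot\E A_{j_0j_0}^{\alpha_{j_0}}=u_i(s)$. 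This is what the paper does.

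\textbf{Uniform bound.} You assert that, after stripping off the $\le j_0-i$ off-diagonal factors, ``the remaining $m-(j_0-i)$ factors are diagonal and contribute $\rho<1$, exactly the bookkeeping already carried out in \eqref{ineq:Sin}.'' This is not the case here. In Lemma \ref{small_moment} the moment order $\alpha$ is strictly below $\wt\alpha_i$, so \emph{every} diagonal factor $A_{jj}$ with $j\trianglerighteq i$ has $\E A_{jj}^\alpha<1$. In the present lemma the moment order is $\alpha_{j_0}$, and the diagonal factor $A_{j_0j_0}$ — which a typical path accumulates in arbitrarily long runs at its tail, since an upper triangular chain cannot leave $j_0$ once it reaches it — has $\E A_{j_0j_0}^{\alpha_{j_0}}=1$. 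Those factors give no exponential gain, so the bound $u_i(s)\le C\sum_m m^d\rho^m$ does not follow from the bookkeeping of Lemma \ref{small_moment}. The missing idea is to decompose each path $h\in H_s(i,j_0)$ by its \emph{first} arrival time at $j_0$: split off the trailing run of $A_{j_0j_0}$-factors (each with $\alpha_{j_0}$-moment $1$, hence harmless) and bound the remaining initial segment, which lives in the paper's set $H''_{s-m}(i,j_0)$ of paths that touch $j_0$ only at the last step. Only those diagonal factors have index $j\vartriangleleft j_0$, hence moment $\le\rho=\max\{\E A_{jj}^{\alpha_{j_0}}:j\vartriangleleft j_0\}<1$, which then yields the geometric series. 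Your decomposition by the first departure time $q$ from $i$ goes in the wrong direction: it strips off $\Pi_q^{(i)}$ at the top (where the decay already exists) but leaves $\pi'_{ij_0}(-q,-s+1)$, which again carries an uncontrolled tail of $A_{j_0j_0}$-factors, so it does not close the estimate.

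\textbf{A smaller point.} The display $u_i(s)=\sum_{q=0}^{s-1}\rho^q\,\E\bigl[\pi'_{ij_0}(-q,-s+1)^{\alpha_{j_0}}\bigr]$ is not an identity for $\alpha_{j_0}\ne1$; at best it is a one-sided inequality after Jensen or Minkowski. Since you later abandon this route anyway, it is only a cosmetic slip, but it signals that the real work of the lemma — getting the correct power inside the sum \emph{and} taming the $A_{j_0j_0}$ tail — was not carried out.
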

\begin{proof}
First notice that $u_i(s)>0$ for $s$ large enough since $\pi_{ij}(s)$ is positive
 whenever $j\trianglerighteq i$ and $s\ge j-i$. We are going to prove that the sequence
 $u_i(s)$ is non-decreasing w.r.t. $s$. Observe that 
\begin{align*}
\pi_{ij_0}(s+1)&=\sum_{j:j_0\succcurlyeq j\trianglerighteq i}\pi_{ij}(s)A_{jj_0,-s}\\
& \ge\pi_{ij_0}(s)\cdot A_{j_0j_0,-s}.
\end{align*}
and therefore, by independence,
\[
u_i(s+1)=\E \pi_{ij_0}(s+1)^{\alpha_{j_0}} \ge\E
 \pi_{ij_0}(s)^{\alpha_{j_0}} \cdot\E A_{j_0j_0,-s}^{\alpha_{j_0}}
=\E \pi_{ij_0}(s)^{\alpha_{j_0}}=u_i(s).
\]
Since $u_i(s)$ is non-decreasing in $s$, if $(u_i(s))_{s\in\N}$ is
 bounded uniformly in $s$, the limit exists. For the upper bound on $u_i(s)$, notice that each product in $\pi_{ij}(s)$ (see \eqref{pi}) can be divided into two parts:
\[
\prod_{p=0}^{s-1}
 A_{h(p)h(p+1),-p}=\underbrace{\Big(\prod_{p=0}^{s-m-1}A_{h(p)h(p+1),-p}\Big)}_{\mathrm{first\ 
 part}}\cdot \underbrace{\Pi^{(j_0)}_{-s+m,-s+1}\vphantom{\prod_p^s}}_{\mathrm{second\
 part}}, 
\]
where $m$ denotes the length of the product of $A_{j_0j_0}$ terms. In the
 first part, all $h(\cdot)$ but the last one $h(s-m)=j_0$ are strictly smaller
 than $j_0$. 
Since $\E A_{j_0j_0}^{\alpha_{j_0}}=1$, clearly 
\[
\E\Big( \prod_{p=0}^{s-1} A_{h(p)h(p+1),-p}\Big)^{\alpha_{j_0}}=\E\Big(\prod_{p=0}^{s-m-1}A_{h(p)h(p+1),-p}\Big)^{\alpha_{j_0}}.
\]
Now let $H''_{n}(j,j_0)$ denote the set of all sequences $h\in
 H_{n}(j,j_0)$ which have only one $j_0$ at the end. 
Suppose first that $\alpha_{j_0}<1$. 
Then by Jensen's inequality we obtain
\begin{align}
\nonumber
\E \pi_{ij_0}(s)^{\alpha_{j_0}}
&=\E\Big(\sum_{h\in H_{s}(i,j_0)}\prod_{p=0}^{s-1} A_{h(p)h(p+1),-p}\Big)^{\alpha_{j_0}}\\
\nonumber
&\le\sum_{h\in H_{s}(i,j_0)}\prod_{p=0}^{s-1} 
\E A_{h(p)h(p+1),-p}^{\alpha_{j_0}}\\
\nonumber
&=\sum_{m=0}^{s-1}\sum_{h\in H''_{s-m}(i,j_0)}
\prod_{p=0}^{s-m-1} \E A_{h(p)h(p+1),-p}^{\alpha_{j_0}}\\
\label{h''1}
 &\le \sum_{m=0}^{s-1} d(s-m)^d\cdot M^{j_0-i}\cdot \rho^{s-m-(j_0-i)}\\
\label{h''2}
 &\le d\rho^{i-j_0} M^{j_0-i} \sum_{l=1}^\infty l^d\cdot \rho^l<\infty. 
 \end{align}
Here we put
\begin{equation}
\label{rhom}
\rho=\max\{\E A_{jj}^{\alpha_{j_0}}:j\vartriangleleft j_0\}<1;\qquad M=\max\{\E A_{uv}^{\alpha_{j_0}}:u\vartriangleleft j_0,v\trianglelefteq j_0\}<\infty.
\end{equation}

The term $d(s-m)^d$ in \eqref{h''1} is an upper bound on the number of elements of $H''_{s-m}(i,j)$. Another term $M^{j_0-1}$ bounds the contribution of non-diagonal elements in the product, since any sequence $h\in H_{s-m}(i,j_0)$ generates at most $j_0-i$ such elements. The last term $\rho^{s-m-(j_0-i)}$ in \eqref{h''1} is an estimate of contribution of the diagonal elements since there are at least $s-m-(j_0-i)$ of them.

We obtain \eqref{h''2} from \eqref{h''1} by substituting $l=s-m$ and extending the finite sum to the infinite series. Since the series converges and its sum does not depend on $s$, the expectation $\E\pi_{ij_0}(s)^{\alpha_{j_0}}$
is bounded from above uniformly in $s$.
 
Similarly, if $\alpha_{j_0}\ge 1$, then by Minkowski's inequality we obtain
\begin{align*}
\left(\E \pi_{ij_0}(s)^{\alpha_{j_0}} \right)^{1/\alpha_{j_0}}
&=\left(\E\Big(\sum_{h\in H_{s}(i,j_0)}\prod_{p=0}^{s-1} A_{h(p)h(p+1),-p}
\Big)^{\alpha_{j_0}}\right)^{1/\alpha_{j_0}}\\
&\le\sum_{h\in H_{s}(i,j_0)}\prod_{p=0}^{s-1} 
\left(\E A_{h(p)h(p+1),-p}^{\alpha_{j_0}}\right)^{1/\alpha_{j_0}}\\
&=\sum_{m=0}^{s-1}\sum_{h\in H''_{s-m}(i,j_0)}
\prod_{p=0}^{s-m-1}\Big(\E A_{h(p)h(p+1),-p}^{\alpha_{j_0}}\Big)^{1/\alpha_{j_0}},
\end{align*}
which is again bounded from above, uniformly in $s$, by the same argument.
\end{proof}

The number $u_i$ depends only on $i$, since the index $j_0=j_0(i)$ is
uniquely determined for each $i$. We are going to use $u_i$ as an upper
bound for $\E\pi_{ij}(s)^{\alpha_{j_0}}$ with any $j$ such that
$j_0 \trianglerighteq j\trianglerighteq i$. This is justified by the
following lemma.
\begin{lemma}
 \label{uij}
 For any $j$ such that $j_0 \vartriangleright
 j \trianglerighteq i $ there is $\hat s_j$ such that
 \begin{equation}
\label{tau}
\E \pi_{ij}(s)^{\alpha_{j_0}} <u_i \quad{\rm for}\ s>\hat s_j.
\end{equation}
\end{lemma}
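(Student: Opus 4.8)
The plan is to deduce the lemma from a single estimate, namely $\E\pi_{ij}(s)^{\alpha_{j_0}}\to 0$ as $s\to\infty$. Granting this, the conclusion is immediate: Lemma \ref{u:finite} gives $u_i>0$, so any $\hat s_j$ large enough that $\E\pi_{ij}(s)^{\alpha_{j_0}}<u_i$ for all $s>\hat s_j$ does the job. The key observation behind the estimate is a structural one separating $j$ from $j_0$. Since $\bfA$ is upper triangular and $j\vartriangleleft j_0$, every sequence $h\in H_s(i,j)$ is non-decreasing with $h(s)=j$, so $h(p)\preccurlyeq h(p+1)\preccurlyeq\cdots\preccurlyeq j\trianglelefteq j_0$ and $h(p)\le j<j_0$ for every $p$; hence $h(p)\vartriangleleft j_0$ for all $p$. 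In particular $j_0$ never appears along such a path, so — unlike in the proof of Lemma \ref{u:finite} — there is no tail run of the neutral factor $A_{j_0j_0}$ (whose $\alpha_{j_0}$-th moment equals $1$) to split off: in $\prod_{p=0}^{s-1}A_{h(p)h(p+1),-p}$ every diagonal factor satisfies $\E A_{h(p)h(p)}^{\alpha_{j_0}}\le\rho<1$ and every off-diagonal factor satisfies $\E A_{h(p)h(p+1),-p}^{\alpha_{j_0}}\le M<\infty$, with $\rho,M$ as in \eqref{rhom}.

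From here I would repeat the moment computation of Lemma \ref{u:finite}: split into the cases $\alpha_{j_0}<1$ (Jensen) and $\alpha_{j_0}\ge1$ (Minkowski), and use independence in the time index to replace the product of random variables by the product of their $\alpha_{j_0}$-moments. Each off-diagonal step of a path from $i$ to $j$ raises the index by at least one, so there are at most $j-i$ of them and hence at least $s-(j-i)$ diagonal factors; together with the bound $|H_s(i,j)|\le ds^d$ from the proof of Lemma \ref{small_moment}, and with the absence of a $j_0$-run to sum over, the computation that leads to \eqref{h''1} now gives $\E\pi_{ij}(s)^{\alpha_{j_0}}\le ds^d\,M^{j-i}\rho^{\,s-(j-i)}$ for $\alpha_{j_0}<1$ and $\E\pi_{ij}(s)^{\alpha_{j_0}}\le(ds^d)^{\alpha_{j_0}}M^{j-i}\rho^{\,s-(j-i)}$ for $\alpha_{j_0}\ge1$. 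In both cases the right-hand side is a polynomial in $s$ times $\rho^{\,s}$ with $\rho<1$, hence tends to $0$, as required.

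I do not expect a real obstacle: this is essentially a rerun of the estimate already carried out for Lemma \ref{u:finite}, the only new ingredient being the elementary remark that a path confined strictly below $j_0$ cannot accumulate the neutral factor $A_{j_0j_0}$, so the product decays geometrically rather than stabilizing at a positive limit. The points to keep an eye on are minor: the combinatorial counts (of off-diagonal steps and of elements of $H_s(i,j)$) must be uniform in $s$, which they are by Lemmas \ref{small_moment} and \ref{u:finite}; in the degenerate subcase $i=j$ one has $\pi_{ij}(s)=\Pi_s^{(i)}$ and the estimate is trivial since $\E A_{ii}^{\alpha_{j_0}}\le\rho<1$ (as $i\vartriangleleft j_0$); and if $i\not\trianglelefteq j$ then $\pi_{ij}(s)=0$ a.s.\ and there is nothing to prove.
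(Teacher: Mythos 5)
Your proposal is correct and follows essentially the same route as the paper: reduce to showing $\E\pi_{ij}(s)^{\alpha_{j_0}}\to 0$, bound $|H_s(i,j)|$ by $ds^d$ as in Lemma~\ref{small_moment}, and use the constants $\rho<1$, $M<\infty$ from \eqref{rhom} to obtain a bound of the form (polynomial in $s$)$\,\cdot\,\rho^{s}$. The only (welcome) addition is your explicit remark that the reason this tends to $0$, unlike in Lemma~\ref{u:finite}, is that a path terminating at $j<j_0$ never visits $j_0$ and so never accumulates the neutral factor $A_{j_0j_0}$; the paper leaves this implicit.
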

\begin{proof}
The argument is similar to that in the proof of Lemma \ref{small_moment}. Indeed, one finds $\pi_{ij}(s)$ in $S_{i,n}$ of \eqref{eq:defSin} by setting $s=m$. We briefly recall the argument for $\alpha_{j_0}\le 1$. The case $\alpha_{j_0}>1$ is similar. Without loss of generality we set $s\ge 2d-1$. The number of sequences in $H_{s}(i,j)$ is less than $ds^d$. 
Taking $\rho$ and $M$ as in \eqref{rhom},
from \eqref{pi} and Jensen's inequality we infer
 \begin{align*}
  \E \pi_{ij}(s)^{\alpha_{j_0}}
  &=\E\left(\sum_{h\in H_{s}(i,j)}\prod_{p=0}^{s-1} A_{h(p)h(p+1),-p}\right)^{\alpha_{j_0}}\\
  &\le\sum_{h\in H_{s}(i,j)}\prod_{p=0}^{s-1} \E A_{h(p)h(p+1)}^{\alpha_{j_0}}\\
  &\le d M^d s^d \rho^{s-d}\ 
 \stackrel{s\to\infty}\longrightarrow 0, 
 \end{align*}
which implies \eqref{tau}. 
\end{proof}

We have already done all the preliminaries and we are ready for the goal of this section,
i.e. to establish the expression \eqref{main:expression}:
\[
 W_{\ell,0}= \pi_{\ell j_0}(s) W_{j_0,-s}+ R_{\ell j}(s)
\]
and prove the negligibility of the term $R_{\ell j}(s)$ in the tail when $s\to\infty$.

\begin{lemma}
\label{one_many}
Suppose that $\wt\alpha_\ell<\alpha_\ell$ and $j_0\vartriangleright
 \ell$ is the unique number with the property
 $\wt\alpha_\ell=\alpha_{j_0}$. 
Assume that
\[
\lim_{x\to\infty}x^{\alpha_{j_0}}\P(W_j>x)=C_j>0
\]
whenever $j_0\trianglerighteq j\vartriangleright \ell$. Then for any
 $\varepsilon>0$ there exists $s_\ell=s_\ell(\varepsilon)$ such that if $s>s_\ell$,
 $W_{\ell,0}$ has a representation 
\begin{align}
\label{d1} W_{\ell,0}=\pi_{\ell j_0}(s) W_{j_0,-s}+R_{\ell
 j_0}(s),
\end{align}
where $R_{\ell j_0}(s)$ satisfies 
\begin{align}
\label{d2}
&\varlimsup_{x\to\infty}x^{\alpha_{j_0}}\P(|R_{\ell j_0}(s)|>x)<\varepsilon.
\end{align}
\end{lemma}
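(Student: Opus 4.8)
The plan is to extract the dominant term $\pi_{\ell j_0}(s)W_{j_0,-s}$ from a two--stage expansion of $W_{\ell,0}$ and then bound the remainder piece by piece; the exponent $s$ will be split as $s=s_1+s_2$, with $s_1$ chosen first and $s_2$ afterwards. Fix $\varepsilon>0$. First I would use the series \eqref{eq:series} truncated after $s_1$ terms together with Lemma \ref{q-decomposition} to write $W_{\ell,0}=Q_W(s_1)+Q_B(s_1)+Q_T(s_1)$, where $Q_W(s_1)=\sum_{j:j\vartriangleright\ell}\pi_{\ell j}(s_1)W_{j,-s_1}$ and $Q_T(s_1)=\Pi^{(\ell)}_{s_1}W_{\ell,-s_1}$, and then split $Q_W(s_1)=Q_W'(s_1)+Q_W''(s_1)$, with $Q_W'(s_1)=\sum_{j:\ell\vartriangleleft j\trianglelefteq j_0}\pi_{\ell j}(s_1)W_{j,-s_1}$ collecting the components of dominant tail index $\alpha_{j_0}$ and $Q_W''(s_1)$ gathering the rest (those $j\not\trianglelefteq j_0$, for which $\wt\alpha_j>\alpha_{j_0}$). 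Inside $Q_W'(s_1)$ I would push each $W_{j,-s_1}$ down to time $-s$. For $j=j_0$ this is just $s_2$ steps of the scalar recursion,
\[
W_{j_0,-s_1}=\Pi^{(j_0)}_{-s_1,-s+1}W_{j_0,-s}+E_{j_0}(s_1,s_2),\qquad E_{j_0}(s_1,s_2)=\sum_{n=0}^{s_2-1}\Pi^{(j_0)}_{-s_1,-s_1-n+1}D_{j_0,-s_1-n}.
\]
For $\ell\vartriangleleft j\vartriangleleft j_0$, every such $j$ is strictly larger than $\ell$, has $\wt\alpha_j=\alpha_{j_0}<\alpha_j$ with the same (and unique) $j_0$, and inherits the tail hypothesis of the lemma; hence \eqref{d1}--\eqref{d2} is available for $j$ at this stage of the induction (from Lemma \ref{one_one} when there is no such $j$ at all, and otherwise from an earlier application of the present lemma to a larger coordinate), and shifting it by $s_1$ units of time gives $W_{j,-s_1}=\pi_{jj_0}(-s_1,-s+1)W_{j_0,-s}+R_{jj_0}(-s_1;s_2)$ with $R_{jj_0}(-s_1;s_2)\ge0$. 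Collecting the coefficients of $W_{j_0,-s}$ and invoking the Chapman--Kolmogorov identity $\pi_{\ell j_0}(s)=\sum_{m:\ell\trianglelefteq m\trianglelefteq j_0}\pi_{\ell m}(s_1)\pi_{mj_0}(-s_1,-s+1)$ shows that the extracted coefficient is $\pi_{\ell j_0}(s)$ minus only the $m=\ell$ summand $\Pi^{(\ell)}_{s_1}\pi_{\ell j_0}(-s_1,-s+1)$; but that summand times $W_{j_0,-s}$ is one of the nonnegative terms in the expansion of $Q_T(s_1)=\Pi^{(\ell)}_{s_1}W_{\ell,-s_1}$, so it may be absorbed into $Q_T(s_1)$, which only makes the latter smaller. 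This yields \eqref{d1} with
\[
R_{\ell j_0}(s)=Q_W''(s_1)+Q_B(s_1)+Q_T(s_1)+Q_W^*(s_1,s_2),\quad Q_W^*(s_1,s_2)=\pi_{\ell j_0}(s_1)E_{j_0}(s_1,s_2)+\sum_{j:\ell\vartriangleleft j\vartriangleleft j_0}\pi_{\ell j}(s_1)R_{jj_0}(-s_1;s_2),
\]
and with $R_{\ell j_0}(s)\ge0$ a.s., as the main proof requires.

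\textbf{The four estimates.} It remains to verify \eqref{d2}. The term $Q_T(s_1)$ is controlled by Lemma \ref{q^s_step}, whose hypotheses coincide with ours: choose $s_1$ so large that $\varlimsup_{x\to\infty}x^{\alpha_{j_0}}\P(Q_T(s_1)>x)<\varepsilon/4$ (which survives the subtraction above, by monotonicity). For $Q_B(s_1)$, estimate \eqref{qs2} gives $\E Q_B(s_1)^{\alpha_{j_0}}<\infty$, hence by dominated convergence $x^{\alpha_{j_0}}\P(Q_B(s_1)>x)\to0$; so $Q_B(s_1)$ is tail--negligible for every fixed $s_1$. The same remark disposes of $Q_W''(s_1)$, since $\E W_j^{\alpha_{j_0}}<\infty$ by Lemma \ref{small_moment} (as $\wt\alpha_j>\alpha_{j_0}$) and $\E\pi_{\ell j}(s_1)^{\alpha_{j_0}}<\infty$ by (T-5), and of $\pi_{\ell j_0}(s_1)E_{j_0}(s_1,s_2)$, because $\E E_{j_0}(s_1,s_2)^{\alpha_{j_0}}<\infty$ (each $\Pi^{(j_0)}_{-s_1,-s_1-n+1}$ contributes exactly $1$ to the $\alpha_{j_0}$--moment, and the finitely many $D_{j_0,\cdot}$ have finite $\alpha_{j_0}$--moment; the only $W$--terms released by $E_{j_0}$ are the $W_i$ with $i\succ j_0$, for which $\wt\alpha_i>\alpha_{j_0}$). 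Finally, in the sum defining $Q_W^*$, $\pi_{\ell j}(s_1)$ is independent of $R_{jj_0}(-s_1;s_2)$ (disjoint time ranges) and has finite $\alpha_{j_0}$--moment, so the $\varlimsup$--version of Breiman's lemma used in the proof of Theorem \ref{main} makes $\varlimsup_{x\to\infty}x^{\alpha_{j_0}}\P(\pi_{\ell j}(s_1)R_{jj_0}(-s_1;s_2)>x)$ as small as we please, provided $\varlimsup_{x\to\infty}x^{\alpha_{j_0}}\P(R_{jj_0}(-s_1;s_2)>x)$ is small enough — which by \eqref{d2} applied to $j$ holds once $s_2$ is large (with the threshold allowed to depend on the already--fixed $s_1$). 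Adding the finitely many contributions gives \eqref{d2} for $R_{\ell j_0}(s)$ whenever $s=s_1+s_2$ is large enough; this proves the lemma.

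\textbf{Main obstacle.} The hard part is the orchestration, not any single estimate. The two limits must be taken in the order $x\to\infty$ then $s\to\infty$, and the two scales must be chosen in sequence --- $s_1$ first (for Lemma \ref{q^s_step}) and $s_2$ second (for the inductive hypotheses on the $R_{jj_0}$, with $\varepsilon''$--thresholds depending on the frozen $s_1$). One must also keep strict account of which $\sigma$--algebra each factor lives on: everything entering $R_{\ell j_0}(s)$ through $E_{j_0}$, $R_{jj_0}$, $Q_T$ or $Q_B$ depends only on $\{(\bfA_t,\bfB_t):t\le -s_1\}$, hence is independent of the coefficients $\pi_{\ell j}(s_1)$, while $W_{j_0,-s}$ depends only on $\{(\bfA_t,\bfB_t):t\le -s\}$, so that the extracted coefficient (which lives on $\{(\bfA_t,\bfB_t):-s<t\le0\}$) is independent of $W_{j_0,-s}$, as the main proof implicitly uses; and the Chapman--Kolmogorov bookkeeping of the coefficients must be carried out precisely enough that what is pulled out is exactly $\pi_{\ell j_0}(s)$ up to the term already absorbed into $Q_T(s_1)$. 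Finally, it is precisely the appeal to \eqref{d1}--\eqref{d2} for the intermediate coordinates $\ell\vartriangleleft j\vartriangleleft j_0$ that makes this the induction step; when no such $j$ exists, $Q_W^*(s_1,s_2)\equiv0$ and one is back in the base case of Lemma \ref{one_one}.
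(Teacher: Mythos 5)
Your proof is correct and follows essentially the same route as the paper's: truncate the series for $W_{\ell,0}$ at $s_1$, split $Q_W(s_1)$ into dominant ($Q_W'$) and negligible ($Q_W''$) parts, push the $W_{j,-s_1}$'s with $\ell\vartriangleleft j\trianglelefteq j_0$ down to time $-s$ via the (shifted) inductive hypothesis plus the scalar recursion at $j_0$, collect the coefficients of $W_{j_0,-s}$ via Chapman--Kolmogorov, and estimate the four leftover pieces using \eqref{qs2}, Lemma \ref{small_moment}, Lemma \ref{q^s_step} and the $\varlimsup$-version of Breiman's lemma, with $s_1$ chosen first and $s_2$ afterwards. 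Algebraically, your $R_{\ell j_0}(s)$ is identical to the paper's (note $\pi_{\ell\ell}(s_1)=\Pi^{(\ell)}_{s_1}$ and your $\pi_{\ell j_0}(s_1)E_{j_0}(s_1,s_2)$ is exactly the paper's $j=j_0$ summand $\pi_{\ell j_0}(s_1)R_{j_0j_0}(-s_1,-s_1-s_2+1)$).

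The one genuine (and welcome) difference is bookkeeping: instead of carrying the subtracted term $-\Pi^{(\ell)}_{s_1}\pi_{\ell j_0}(-s_1,-s+1)W_{j_0,-s}$ as a signed piece inside $Q^*_W(s_1,s_2)$, you observe $Q_T(s_1)=\Pi^{(\ell)}_{s_1}W_{\ell,-s_1}$ and $W_{\ell,-s_1}\ge\pi_{\ell j_0}(-s_1,-s+1)W_{j_0,-s}$ a.s., so the subtraction can be absorbed into $Q_T(s_1)$, only decreasing it. This keeps every summand of $R_{\ell j_0}(s)$ nonnegative and makes $R_{\ell j_0}(s)\ge0$ a.s.\ manifest by construction (with the inductive hypothesis strengthened accordingly). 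That is actually a mild improvement over the paper, whose $Q^*_W$ is explicitly of indeterminate sign; the paper's claim ``$R_{\ell j_0}(s)\ge0$ a.s.'' in the proof of Theorem \ref{main} is then better justified by the direct observation $W_{\ell,0}\ge\pi_{\ell j_0}(s)W_{j_0,-s}$ a.s., whereas your construction makes it immediate. A second, cosmetic difference: since you fix $s_1$ before letting $s_2$ grow, you can use $\E\pi_{\ell j}(s_1)^{\alpha_{j_0}}$ as a finite constant and do not need the uniform-in-$s_1$ bound of Lemma \ref{uij} that the paper invokes. Both routes are sound; yours is slightly leaner in that respect.
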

The proof is given by induction and first we consider the case when indeed
$\ell\prec j_0$, not only $j_0\vartriangleright\ell$. More precisely, we prove the following lemma which serves as a basic tool in each inductive step.
\begin{lemma}
\label{one_one}
Suppose that $\alpha_\ell>\wt\alpha_\ell=\alpha_{j_0}$ for some $j_0\vartriangleright\ell$ 
and $\wt \alpha_j > \alpha_{j_0}$ for every $j\vartriangleright
 \ell,\,j\neq j_0$.
 Then for any $\varepsilon>0$
 there exists $s_\ell=s_\ell(\varepsilon)$ such that for any $s>s_\ell$, $W_{\ell,0}$ has a representation \eqref{d1} which satisfies
 \eqref{d2}. 
\end{lemma}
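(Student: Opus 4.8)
The plan is to feed the decomposition of $W_{\ell,0}$ prepared in this section into a main-term-plus-remainder splitting: isolate the single dominant summand $\pi_{\ell j_0}(s)W_{j_0,-s}$ and show the rest breaks into one piece negligible only as $s\to\infty$ and two pieces negligible already for every fixed $s$. Combining \eqref{series_division}, Lemma \ref{series_rep} and Lemma \ref{q-decomposition} gives, $a.s.$,
\[
W_{\ell,0}=Q_W(s)+Q_B(s)+Q_T(s),\qquad Q_W(s)=\sum_{j:\,j\vartriangleright\ell}\pi_{\ell j}(s)\,W_{j,-s}.
\]
Since $j_0\vartriangleright\ell$, the term $\pi_{\ell j_0}(s)W_{j_0,-s}$ is one of the summands of $Q_W(s)$; putting $Q''_W(s)=\sum_{j:\,j\vartriangleright\ell,\ j\neq j_0}\pi_{\ell j}(s)W_{j,-s}$ and $R_{\ell j_0}(s)=Q''_W(s)+Q_B(s)+Q_T(s)$ yields representation \eqref{d1}. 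For $s\ge j_0-\ell$ the line after \eqref{pi} also gives $\P(\pi_{\ell j_0}(s)>0)>0$, $\E\pi_{\ell j_0}(s)^{\alpha_{j_0}}<\infty$ (as $\wt\alpha_\ell=\alpha_{j_0}$) and independence of $\pi_{\ell j_0}(s)$ from $W_{j_0,-s}$, so the auxiliary facts invoked in the proof of Theorem \ref{main} are in place. As $Q''_W(s),Q_B(s),Q_T(s)\ge0$, it is enough to bound $\varlimsup_{x\to\infty}x^{\alpha_{j_0}}\P(\,\cdot\,>x)$ for each of them and to use $\{R_{\ell j_0}(s)>x\}\subseteq\{Q''_W(s)>x/3\}\cup\{Q_B(s)>x/3\}\cup\{Q_T(s)>x/3\}$.

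The elementary observation that trivializes two of these is that a nonnegative $X$ with $\E X^{\alpha_{j_0}}<\infty$ satisfies $x^{\alpha_{j_0}}\P(X>x)\le\E[X^{\alpha_{j_0}}\mathbf{1}_{\{X>x\}}]\to0$, i.e.\ $\P(X>x)=o(x^{-\alpha_{j_0}})$. For $Q''_W(s)$: every index $j$ in that sum has $j\neq j_0$, hence $\alpha_{j_0}<\wt\alpha_j$ and $\E W_j^{\alpha_{j_0}}<\infty$ by Lemma \ref{small_moment}, while $\E\pi_{\ell j}(s)^{\alpha_{j_0}}<\infty$ by the line after \eqref{pi}; by independence each summand, and hence the finite sum $Q''_W(s)$, has finite $\alpha_{j_0}$-th moment. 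For $Q_B(s)$ this is exactly \eqref{qs2}. Hence $\varlimsup_{x\to\infty}x^{\alpha_{j_0}}\P(Q''_W(s)>x)=\varlimsup_{x\to\infty}x^{\alpha_{j_0}}\P(Q_B(s)>x)=0$ for every $s$.

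For $Q_T(s)$ I would apply Lemma \ref{q^s_step}. Its hypothesis \eqref{q^s_assumption} concerns indices $j$ with $j_0\trianglerighteq j\vartriangleright\ell$; any such $j$ other than $j_0$ would satisfy $j\trianglelefteq j_0$, hence $\wt\alpha_j\le\alpha_{j_0}$, contradicting the hypothesis of Lemma \ref{one_one}, so only $j=j_0$ occurs, and there $\lim_{x\to\infty}x^{\alpha_{j_0}}\P(W_{j_0}>x)=C_{j_0}>0$ holds by Lemma \ref{minimal_alpha}, which applies because $\wt\alpha_{j_0}=\alpha_{j_0}$ (a consequence of $\wt\alpha_\ell=\alpha_{j_0}$ and $\ell\trianglelefteq j_0$). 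Lemma \ref{q^s_step} then provides, for each $\varepsilon'>0$, an $s_0(\ell,\varepsilon')$ with $\varlimsup_{x\to\infty}x^{\alpha_{j_0}}\P(Q_T(s)>x)<\varepsilon'$ for $s\ge s_0(\ell,\varepsilon')$. Choosing $\varepsilon'=3^{-\alpha_{j_0}}\varepsilon$ and $s_\ell(\varepsilon)=\max\{s_0(\ell,\varepsilon'),\,j_0-\ell\}$, the three-way split gives $\varlimsup_{x\to\infty}x^{\alpha_{j_0}}\P(R_{\ell j_0}(s)>x)\le3^{\alpha_{j_0}}(0+0+\varepsilon')=\varepsilon$ for all $s>s_\ell(\varepsilon)$, which is \eqref{d2}.

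I do not expect any single step to be hard; what must be kept straight is the asymmetry between the remainder pieces. $Q_B(s)$ and $Q''_W(s)$ are $o(x^{-\alpha_{j_0}})$ for every fixed $s$ precisely because they only see coordinates $W_j$ with $j\neq j_0$ (so $\E W_j^{\alpha_{j_0}}<\infty$) together with coefficients of finite $\alpha_{j_0}$-th moment, whereas $Q_T(s)$ inherits the genuine $x^{-\alpha_{j_0}}$ tail of $W_{j_0}$ and is killed only by letting $s\to\infty$ and exploiting $\E A_{\ell\ell}^{\alpha_{j_0}}<1$ through Lemma \ref{q^s_step}. Consequently the two limit operations cannot be interchanged: the limit in $x$ is always taken before the limit in $s$.
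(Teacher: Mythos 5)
Your proposal is correct and follows essentially the same route as the paper: decompose $W_{\ell,0}=Q_W(s)+Q_B(s)+Q_T(s)$ via Lemmas \ref{series_rep} and \ref{q-decomposition}, split off the single summand $\pi_{\ell j_0}(s)W_{j_0,-s}$ from $Q_W(s)$, set $R_{\ell j_0}(s)$ equal to what remains (your $Q''_W(s)$ is exactly the paper's $\hat Q_W(s)$ in \eqref{s-decomposition}), and bound it using the finite $\alpha_{j_0}$-moments of $Q''_W(s)$ (Lemma \ref{small_moment}) and $Q_B(s)$ (\eqref{qs2}) together with Lemma \ref{q^s_step} for $Q_T(s)$. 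You also spell out two things the paper leaves implicit — the elementary $o(x^{-\alpha_{j_0}})$ consequence of a finite $\alpha_{j_0}$-moment, and the check that under the hypotheses of this lemma the index set $\{j: j_0\trianglerighteq j\vartriangleright\ell\}$ reduces to $\{j_0\}$ so that \eqref{q^s_assumption} is supplied by Lemma \ref{minimal_alpha} — both of which are sound.
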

The condition in the Lemma says that $j_0$ is the unique coordinate
which has the heaviest tail among $j:j\vartriangleright \ell$, all the
other coordinates that determine $\ell$
do not depend on $j_0$ and they have lighter tails.

Notice that as long as we only represent $W_{\ell,0}$ by $\pi_{\ell
j_0}(s)W_{j_0,-s}$ plus some r.v., we need not take a large
$s$. Indeed, $s=0$ is enough to obtain $(\pi_{\ell j_0}(0)=A_{\ell
j_0,0})$ if $\ell\prec j_0$. 
Thus, the number $s_\ell$ is specific for \eqref{d2}. 

\begin{proof}
In view of \eqref{qs1} we may write
\begin{align}
\label{s-decomposition}
Q_W(s)&=\pi_{\ell j_0}(s)W_{j_0,-s}
+\underbrace{\sum_{\substack{j:j\vartriangleright \ell\\ j\neq j_0}}\pi_{\ell j}(s)W_{j,-s}}_{\hat{Q}_W(s)} \\
&= \pi_{\ell j_0}(s) W_{j_0,-s} +\hat{Q}_W(s), \nonumber
\end{align}
where $\E \hat{Q}_W(s)^{\alpha_{j_0}}<\infty$ by the two facts;\\
$(1)$ $\E \pi_{\ell j}(s)^{\alpha_{j_0}}<\infty$ for any $j$
(see the explanation below \eqref{pi}), \\
$(2)$ $\E W_j^{\alpha_{j_0}}<\infty$ for $j\vartriangleright \ell,\,j\neq j_0,$
 which is due to Lemma \ref{small_moment}. 

Thus by Lemma \ref{q-decomposition} we have 
\[
W_{\ell,0}=Q_W(s)+Q_B(s)+Q_T(s)=\pi_{\ell j_0}(s)W_{j_0,-s}+\hat{Q}_W(s)+Q_B(s)+Q_T(s),
\]
where $\hat{Q}_W(s)$ and $Q_B(s)$ have finite moment of order
 $\alpha_{j_0}$ and $Q_T(s)$ satisfies \eqref{q^s_estimate}. Now putting 
\begin{equation}
\label{R}
R_{\ell j_0}(s)=\hat{Q}_W(s)+Q_B(s)+Q_T(s),
\end{equation}
and setting $s_\ell(\varepsilon)=s_0(\ell,\varepsilon)$
 as in Lemma \ref{q^s_step}, 
we obtain the result. 
\end{proof}

\begin{proof}[Proof of Lemma \ref{one_many}]
 Here we may allow the existence of $j:j_0 \vartriangleright j \vartriangleright
 \ell$, so that there exist sequences $(j_i)_{0\le i\le n}$ such that 
 $j_0\vartriangleright j_1\vartriangleright
 \cdots\vartriangleright j_n=\ell$. Since these sequences are strictly
 decreasing, their lengths are at most $j_0-\ell+1$, i.e. possibly smaller
 than $j_0-\ell+1$. Let $n_0=n_0(\ell)$ denote the maximal length of sequence such
 that 
 $(j_i)_{1\le i\le n_0}$ satisfies $j_0\vartriangleright
 j_1\vartriangleright\cdots\vartriangleright j_{n_0}=\ell$.
 Then clearly $j_0\succ j_1\succ\cdots\succ j_{n_0}=\ell$.
In the same way we define $n_0(j)$ for any $j$ in the range $\ell\vartriangleleft j\trianglelefteq j_0$. We sometimes abbreviate to $n_0$ when we mean $n_0(\ell)$.
 We use induction with respect to this maximal number $n_0$
 to prove \eqref{d1} and \eqref{d2}. First we directly prove these two properties in the cases $n_0=0,1$ which serve as the induction basis. For $n_0>1$ the proof relies on the assumption that analogues of \eqref{d1} and \eqref{d2} hold for any $j$ with $n_0(j)<n_0(\ell)$.
The proof is divided into four steps.
 
{\bf Step 1. Scheme of the induction and the basis.}\par
Consider arbitrary coordinate $j_0$ satisfying $\alpha_{j_0}=\wt\alpha_{j_0}$. We prove that for any $\ell\trianglelefteq j_0$ satisfying $\wt\alpha_\ell=\alpha_{j_0}$ the properties \eqref{d1} and \eqref{d2} hold. The proof follows by induction with respect to $n_0(\ell)$. Namely, we assume that for any $j \trianglelefteq j_0$ satisfying $\wt\alpha_j=\alpha_{j_0}$ and $n_0(j)<n_0(\ell)$ and for any $\varepsilon>0$ there is $s_j=s_j(\varepsilon)$ such that for any $s>s_j$ we have
\begin{equation}
 W_{j,0} = \pi_{jj_0}(s)W_{j_0,-s}+R_{jj_0}(s), \label{ci1} 
 \end{equation}
 where $R_{jj_0}(s)$ satisfies 
\begin{align}
 \varlimsup_{x\to\infty} x^{\alpha_{j_0}}
 \P(|R_{jj_0}(s)|>x)<\varepsilon \label{ci2}.  
\end{align}
Then we prove \eqref{d1} and \eqref{d2}.

First we settle the induction basis. Here we consider the cases $n_0=0$ and $n_0=1$. The former case is equivalent to $\ell=j_0$. In that setting, Theorem \ref{main} was already proved in Lemma \ref{minimal_alpha}, nevertheless \eqref{d1} and \eqref{d2} need to be shown separately.
The iteration of \eqref{coordinate} yields then that 
\begin{align}
\label{wj0:recurrsion}
  W_{j_0,0}=A_{j_0j_0,0}W_{j_0,-1}+D_{j_0,0}=\ldots=\underbrace{\Pi^{(j_0)}_{s}}_{\pi_{j_0j_0}(s)}W_{j_0,-s}
  +\underbrace{\sum_{n=0}^{s-1}\Pi^{(j_0)}_{n}D_{j_0,-n}}_{R_{j_0j_0}(s)},
\end{align}
 where we recall that $\Pi^{(j_0)}_{0}=1$ and $\Pi^{(j_0)}_{1}=A_{j_0j_0,0}$. From the definition of $j_0$ and Lemma \ref{small_moment} it follows that $\E D_{j_0}^{\alpha_{j_0}}<\infty$.
 Since $R_{j_0j_0}(s)$ is constituted by a finite sum of ingredients which have the $\alpha_{j_0}$th moment, we conclude that $\E R_{j_0j_0}(s)^{\alpha_{j_0}}<\infty$ and \eqref{d2} holds for any $s$. Thus we may let $s_\ell(\varepsilon)=1$ for any $\varepsilon>0$.
The case $n_0=1$ is precisely the setting of Lemma \ref{one_one}, in which we have already proved \eqref{d1} and \eqref{d2}.

If $n_0>1$, then there is at least one coordinate $j$ satisfying $\ell\vartriangleleft j\trianglelefteq j_0$. For any such $j$ it follows that $n_0(j)<n_0(\ell)$, hence we are allowed to use the induction assumptions \eqref{ci1} and \eqref{ci2}. In the next step we prove that this range of $j$ is essential, while for any other $j\vartriangleright\ell$ the induction is not necessary.

{\bf Step 2. Decomposition of $Q_W(s)$ and estimation of the negligible term.}\par
The first term in \eqref{d1} comes from the part $Q_W(s)$ of $W_{\ell,0}$ in
 Lemma \ref{q-decomposition} and we further write 
  \begin{align}
\label{eq:decompQ}
  Q_W(s)=
 \underbrace{\sum_{j:j_0\trianglerighteq j\vartriangleright
 \ell}\pi_{\ell j}(s)W_{j,-s}}_{Q'_W(s)}+
 \underbrace{\sum_{j:j_0\not\trianglerighteq
 j\vartriangleright \ell}\pi_{\ell j}(s)W_{j,-s}}_{Q''_W(s)}.
 \end{align}
 Recall that the relation `$\not\trianglerighteq$' describes dependence
 between the components of the solution $\bfW_t$ after a finite number
 of iterations of \eqref{sre1}. Therefore the range of summation
 $j_0\not\trianglerighteq j\vartriangleright\ell$ in $Q''_W(s)$ means 
that $\ell\,(\neq j)$ depends on both $j$ and $j_0$ (by definition of $j_0$), but $j$ does not depend on $j_0$.
The relation $j\vartriangleright \ell$
 implies that $\wt\alpha_j\ge\wt\alpha_\ell$, while
 $j_0\not\trianglerighteq j$ yields that $\wt\alpha_j\neq
 \alpha_{j_0}$. Recalling that $\wt\alpha_\ell=\alpha_{j_0}$, we can say
 that in $Q'_W(s)$ we gather all coordinates $j:j\vartriangleright
 \ell$ such that $\wt\alpha_j=\alpha_{j_0}$ and $Q''_W(s)$ consists of
 $j:j\vartriangleright \ell$ such that $\wt\alpha_j>\alpha_{j_0}$. Hence by Lemma \ref{small_moment} each $W_j$ appearing in $Q''_W(s)$ has a tail of lower order than the tail of $W_{j_0}$.
 
 Notice that $\hat{Q}_W(s)$ defined in \eqref{s-decomposition} is the form that $Q''_W(s)$ takes under the assumptions of Lemma \ref{one_one}. In this special case we also have $Q'_W(s)=\pi_{\ell j_0}(s)W_{j_0,-s}$. We are going to study this expression in the more general setting of Lemma \ref{one_many}.

{\bf Step 3. The induction step: decomposition of $Q'_W(s)$.}\par
To investigate $Q_W'(s)$ in more detail we will
 introduce a shifted version of $R_{ij}(s)$. First
 recall that the r.v.'s $\pi_{ij}(t,s)$ and
 $\pi_{ij}(t-s+1)$ have the same distribution and $\pi_{ij}(t,s)$ can be
 understood as a result of applying $t$ times a shift to all time
 indices in $\pi_{ij}(t-s-1)$. In the same way we define $R_{ij}(t,s)$
 as a result of applying $t$ times a shift to all time indices in
 $R_{ij}(t-s-1)$. In particular we have $R_{ij}(0,-s+1)=R_{ij}(s)$. 
By the shift-invariance of the stationary solution,
\eqref{ci1} and \eqref{ci2} are equivalent to their time-shifted versions:
\begin{equation}
\label{ci1'}
W_{j,t}=\pi_{jj_0}(t,t-s+1)W_{j_0,t-s}+R_{jj_0}(t,t-s+1),\quad t\in\Z
\end{equation}
and
\begin{equation}
\label{ci2'}
\varlimsup_{x\to\infty} x^{\alpha_{j_0}}\P(|R_{jj_0}(t,t-s+1)|>x)<\varepsilon,\quad t\in\Z
\end{equation}
respectively.
Fix arbitrary numbers $s_1,s_2\in\N$.
Letting $t=-s_1$ in \eqref{ci1'}, we obtain
 
\begin{align}
 Q'_W(s_1) &= \sum_{j:j_0 \trianglerighteq j \vartriangleright \ell}
 \pi_{\ell j}(s_1)W_{j,-s_1} \nonumber \\
 &= \sum_{j:j_0 \trianglerighteq j \vartriangleright \ell} \pi_{\ell j}(s_1)
 \Big\{
 \pi_{jj_0}(-s_1,-s_1-s_2+1) W_{j_0,-s_1-s_2} + R_{jj_0}(-s_1,-s_1-s_2+1) 
\Big\}  \nonumber \\
 & = \sum_{j:j_0 \trianglerighteq  j \vartriangleright \ell}
\pi_{\ell j}(s_1)\pi_{jj_0}(-s_1,-s_1-s_2+1) W_{j_0,-s_1-s_2}\nonumber\\
&\qquad + \sum_{j:j_0 \trianglerighteq j \vartriangleright \ell}
 \pi_{\ell j}(s_1) R_{jj_0}(-s_1,-s_1-s_2+1) \nonumber \\
  & = \sum_{j:j_0 \trianglerighteq  j \trianglerighteq \ell}
\pi_{\ell j}(s_1)\pi_{jj_0}(-s_1,-s_1-s_2+1) W_{j_0,-s_1-s_2}
 \nonumber \\
 &\qquad\left.\begin{aligned}
 &-\pi_{\ell\ell}(s_1)\pi_{\ell j_0}(-s_1,-s_1-s_2+1)W_{j_0,-s_1-s_2}\nonumber \\
 &+ \sum_{j:j_0 \trianglerighteq j \vartriangleright \ell}
 \pi_{\ell j}(s_1) R_{jj_0}(-s_1,-s_1-s_2+1)\end{aligned}\right\rbrace =:Q^*_W(s_1,s_2) \nonumber \\
 & = \pi_{\ell j_0}(s_1+s_2)W_{j_0,-s_1-s_2}+Q^*_W(s_1,s_2), \label{eq:decompQ1}
\end{align}
where $\pi_{\ell j_0}(s_1+s_2)$ consists of all combinations
 $\pi_{\ell j}(s_1)\pi_{jj_0}(-s_1,-s_1-s_2+1)$ on $j:j_0 \trianglerighteq
 j \trianglerighteq \ell$. 
 This is clear when we recal that $\pi_{\ell j}(s_1), \pi_{jj_0}(-s_1,-s_1-s_2+1)$ and $\pi_{\ell j_0}(s_1+s_2)$ are appropriate entries of the matrices $\bfPi_{s_1},\bfPi_{-s_1,-s_1-s_2+1}$ and $\bfPi_{s_1+s_2}=\bfPi_{s_1}\bfPi_{-s_1,-s_1-s_2+1}$ respectively.
 
Now a combination of 
 \eqref{decomposition}, \eqref{eq:decompQ} and \eqref{eq:decompQ1} yields 
 \begin{align}
 W_{\ell,0} &= Q_W(s_1) + Q_B(s_1) +Q_T(s_1) \nonumber \\ 
 &= Q_W'(s_1)+Q_W''(s_1) + Q_B(s_1) +Q_T(s_1) \nonumber \\ 
\label{eq:remainer}
 &= \pi_{\ell j_0}(s_1+s_2)W_{j_0,-s_1-s_2} + Q''_W(s_1) + Q^*_W(s_1,s_2) + Q_B(s_1) +Q_T(s_1)\\  
 &= \pi_{\ell j_0}(s_1+s_2)W_{j_0,-s_1-s_2} +R_{\ell j_0}(s_1+s_2). \nonumber
 \end{align}

Our goal will be achieved by putting $s=s_1+s_2$ in \eqref{eq:remainer} and showing
 \eqref{d2} for $R_{\ell j_0}(s)$ with $s=s_1+s_2$.

{\bf Step 4. The induction step: estimation of the negligible terms.}\par
 To obtain \eqref{d2}, we evaluate the four ingredients of
$R_{\ell j_0}(s_1+s_2)$ of \eqref{eq:remainer},
 where the second hypothesis \eqref{ci2} of induction is used. 
 Three of them, $Q''_W(s_1),Q_B(s_1)$ and $Q_T(s_1)$, are nonnegative, hence, it is
 sufficient to establish an upper bound for each of them. The fourth term, $Q^*_W(s_1,s_2)$, may attain both positive and
 negative values, thus we are going to establish an upper bound for its absolute value.
 
 First, since the terms with $j:j_0 \not\trianglerighteq j \vartriangleright \ell$ in
 $Q_W''(s_1)$ of \eqref{eq:decompQ} satisfy the same condition as those of the sum $\hat{Q}_W(s)$ of the previous
 lemma, 
\begin{align}
\tag{\bf E.1}
 \E Q''_W(s_1)^{\alpha_{j_0}}<\infty
\end{align}
 holds. Secondly, 
\begin{align}
\tag{\bf E.2}
 \E Q_B(s_1)^{\alpha_{j_0}}<\infty
\end{align}
 holds in view of
Lemma \ref{q-decomposition}. 
 Moreover, by Lemma \ref{q^s_step} for any $\varepsilon'>0$
 there is $s_0=s_0(\ell,\varepsilon')$ such that for $s_1 > s_0$
 \begin{align}
\tag{\bf E.3}
\label{eq:remainer2}
  \varlimsup_{x\to \infty} x^{\alpha_{j_0}}\P(Q_T(s_1)>x) <\varepsilon'. 
 \end{align}

 For the evaluation of $Q^*_W(s_1,s_2)$, we will use
 \eqref{tau} and therefore we have to assume that $s_1>\max\{\hat s_j; j_0 \trianglerighteq j \vartriangleright \ell \}$
 where $\hat s_j$ are defined in \eqref{tau}. Furthermore, we fix arbitrary $\varepsilon''>0$ and assume that $s_2>\max\{s_j(\varepsilon''):j_0\trianglerighteq j\vartriangleright\ell\}$ where $s_j(\varepsilon'')$ are defined right before \eqref{ci1}.
Recall that $\pi_{\ell j}(s_1),\,j:j_0\vartriangleright j \vartriangleright \ell$ is
 independent of $R_{jj_0}(-s_1,-s_1-s_2+1)$ 
 and has finite moment of order $\alpha_{j_0}+\delta$ with some $\delta>0$. We use \eqref{tau}, \eqref{ci2'} and Lemma \ref{weak_breiman} to obtain 
\begin{align*}
 \varlimsup_{x\to\infty} x^{\alpha_{j_0}}\P(
\pi_{\ell j}(s_1)|R_{jj_0}(-s_1,-s_1-s_2+1)|>x
) \le \E \pi_{\ell j}(s_1)^{\alpha_{j_0}} \cdot \varepsilon'' \le
 u_\ell\cdot \varepsilon''.
\end{align*}
 The situation is different for $j=j_0$. We cannot use Lemma \ref{weak_breiman},
 because we do not know whether $\E \pi_{\ell j_0}(s_1)^{\alpha_{j_0}+\delta}<\infty$
 for some $\delta>0$. Indeed, it is possible that $\E A_{j_0j_0}^{\alpha_{j_0}+\delta}=\infty$
 for all $\delta>0$ and then clearly $\pi_{\ell j_0}(s_1)$ also does not have any moment
 of order greater than $\alpha_{j_0}$. However,
 $\E \pi_{\ell j_0}(s_1)^{\alpha_{j_0}}<\infty$ holds for any $s_1$ and this is enough
 to obtain the desired bound. Since the term $R_{j_0j_0}(s_2)$ is
 nonnegative (see \eqref{wj0:recurrsion}) and
 it was already proved to have finite moment of order $\alpha_{j_0}$, we obtain for any $s_1,s_2$
 \[
  \E\left\{\pi_{\ell j_0}(s_1)|R_{j_0j_0}(-s_1,-s_1-s_2+1)|\right\}^{\alpha_{j_0}}=\E \pi_{\ell j_0}(s_1)^{\alpha_{j_0}}\cdot\E R_{j_0j_0}(s_2)^{\alpha_{j_0}}<\infty
 \]
 and thus
 \[
   \varlimsup_{x\to \infty} x^{\alpha_{j_0}}\P(\pi_{\ell j_0}(s_1)R_{j_0j_0}(-s_1,-s_1-s_2+1)>x)=0.
 \]
Hence, setting $N=\# \{j:j_0 \trianglerighteq
  j\vartriangleright \ell \}$, we obtain that 
 \begin{align}
&  \varlimsup_{x\to\infty} x^{\alpha_{j_0}}
  \P(|Q_W^\ast(s_1,s_2)|>x)\\
&\le
  \varlimsup_{x\to\infty}x^{\alpha_{j_0}}\P\left(\sum_{j:j_0\trianglerighteq
  j\vartriangleright\ell}\pi_{\ell j}(s_1)|R_{jj_0}(-s_1,-s_1- s_2+1)|\right.\nonumber \\
& \hphantom{\le
  \varlimsup_{x\to\infty}x^{\alpha_{j_0}}\P\left(\sum_{j:j_0\trianglerighteq
  j\vartriangleright\ell}\right.}
  +\left. \pi_{\ell\ell}(s_1)\pi_{\ell j_0}(-s_1,-s_1-s_2+1)W_{j_0,-s_1-s_2}>x\vphantom{\sum_{j_0}}\right)\nonumber \\
&\le \varlimsup_{x\to\infty} x^{\alpha_{j_0}} \left(\sum_{j:j_0  \trianglerighteq
  j\vartriangleright \ell} \P\Bigg(\pi_{\ell j}(s_1)
  |R_{jj_0}(-s_1,-s_1-s_2+1)|>\frac{x}{N+1}\Bigg)\right. \nonumber \\
  &\hphantom{\le \varlimsup_{x\to\infty} x^{\alpha_{j_0}}}
  +\P\Bigg(\pi_{\ell\ell}(s_1)\pi_{\ell j_0}(-s_1,-s_1-s_2+1)W_{j_0,-s_1-s_2}>\frac{x}{N+1}\Bigg)\left.\vphantom{\sum_{j_0}}\right) \nonumber \\
&\le \sum_{j:j_0 \trianglerighteq
  j\vartriangleright \ell} \varlimsup_{x\to\infty} x^{\alpha_{j_0}}
  \P((N+1)\cdot \pi_{\ell j}(s_1) |R_{jj_0}(-s_1,-s_1-s_2+1)|>x) \nonumber \\
  &\quad +\varlimsup_{x\to\infty}x^{\alpha_{j_0}}\P((N+1)\cdot \pi_{\ell\ell}(s_1)\pi_{\ell j_0}(-s_1,-s_1-s_2+1)W_{j_0,-s_1-s_2}>x)\nonumber \\
& \le \sum_{j:j_0 \trianglerighteq
  j\vartriangleright \ell } (N+1)^{\alpha_{j_0}}u_\ell \cdot
  \varepsilon''+(N+1)^{\alpha_{j_0}}\rho^{s_1}
u_\ell\cdot C_{j_0}. 
  \label{Q1c}
 \end{align}
 For the last inequality we used Lemma \ref{weak_breiman} and the fact
 that $\pi_{\ell\ell}(s_1)=\Pi^{(\ell)}_{0,-s_1+1}$.
 Since $\rho=\E A_{\ell\ell}^{\alpha_{j_0}}<1$, there is $s'=s'(\varepsilon'')$ such that $\rho^{s_1}u_\ell\cdot C_{j_0}<(N+1)^{\alpha_{j_0}}u_\ell\cdot\varepsilon''$ for all $s_1>s'$. Then, recalling that the sum in the last expression contains at most $N-1$ nonzero terms, the final estimate is

 \begin{equation}
    \label{eq:remainer3}
\tag{\bf E.4}
    \varlimsup_{x\to\infty} x^{\alpha_{j_0}}
    \P(|Q^*_W(s_1,s_2)|>x)<(N+1)^{\alpha_{j_0}+1} u_\ell \cdot\varepsilon''.
 \end{equation}

Now we are going to evaluate $R_{\ell j_0}(s)$ of \eqref{d2}.
The desired estimate can be obtained only if $s$ is chosen properly.

For convenience we briefly recall the conditions on
 $s_1$ and $s_2$ that were necessary to obtain the estimates ({\bf
 E.1-E.4}). The inequalities ({\bf E.1}) and ({\bf E.2}) do not rely on
 any assumption on $s_1$ or $s_2$. The other relations are the
 following. Firstly, to obtain the inequality ({\bf E.3}) we
 need to assume that $s_1>s_0(\ell,\varepsilon')$.
 Secondly, the estimates $s_1>\max\{\hat{s}_j:j_0\trianglerighteq j\vartriangleright \ell\}$ and $s_2>\max\{s_j(\varepsilon''):j_0\trianglerighteq j\vartriangleright\ell\}$ are
 used to prove \eqref{Q1c}. Passing from \eqref{Q1c} to ({\bf E.4}) relies on the condition $s_1>s'$.

Now let 
\begin{align*}
 s&> s_0(\ell,\varepsilon') \vee \max\{\hat{s}_j:j_0 \trianglerighteq j
 \vartriangleright \ell\}\vee s'
+\max\{s_j(\varepsilon''):j_0\trianglerighteq j\vartriangleright\ell\}+1,
\end{align*}
where $\cdot\vee\cdot=\max\{\cdot,\cdot\}$. 
Then there are $s_1> s_0(\ell,\varepsilon')\vee \max\{\hat{s}_j:j_0 \trianglerighteq j \vartriangleright
 \ell\}\vee s'$ and
 $s_2> \max\{s_j(\varepsilon''):j_0\trianglerighteq j\vartriangleright\ell\}$
such that $s=s_1+s_2$.
Then
\[
 R_{\ell j_0}(s)=R_{\ell j_0}(s_1+s_2)=Q''_W(s_1) + Q^*_W(s_1,s_2) + Q_B(s_1) +Q_T(s_1).
\]
The numbers $s_1$ and $s_2$ were chosen in such a way that \eqref{eq:remainer2} and \eqref{eq:remainer3} hold.
The terms $Q''_W(s_1)$ and $Q_B(s_1)$ are negligible in the asymptotics. Therefore it follows that
\begin{align*}
\varlimsup_{x\to\infty} x^{\alpha_{j_0}}\P(R_{\ell j_0}(s)>x) &= \varlimsup_{x\to\infty} x^{\alpha_{j_0}} \P(Q^*_W(s_1,s_2)+Q_T(s_1)>x) \\
& \le \varlimsup_{x\to\infty} x^{\alpha_{j_0}}\P(Q^*_W(s_1,s_2)>x/2)+\varlimsup_{x\to\infty}x^{\alpha_{j_0}} \P(Q_T(s_1)>x/2) \\
&\le 2^{\alpha_{j_0}}\left((N+1)^{\alpha_{j_0}+1}u_\ell\cdot\varepsilon''+\varepsilon'\right). 
\end{align*}
Since $\varepsilon'$ and $\varepsilon''$ are arbitrary, we obtain
 \eqref{d2}. 
\end{proof}

\section{Applications}
\label{application}
Although there must be several applications, we focus
on the multivariate GARCH$(1,1)$ processes, which is our main
motivation. In particular, we consider the constant conditional
correlations model by \cite{bollerslev:1990} and \cite{jeantheau:1998},
which is the most fundamental multivariate GARCH process. Related results
are followings. 
The tail of multivariate GARCH$(p,q)$ has been investigated in 
\cite{fernandez:muriel:2009} but with the setting of Goldie's
condition. A bivariate GARCH$(1,1)$ series with a triangular setting has
been studied in \cite{matsui:mikosch:2016} and
\cite{damek:matsui:swiatkowski:2017}. Particularly in
\cite{damek:matsui:swiatkowski:2017}, detailed analysis was 
presented including exact tail behaviors of both price and volatility
processes. Since the detail of application is an analogue of the bivariate GARCH$(1,1)$, we only see how the
upper triangular SREs are constructed from multivariate GARCH processes. 

Let $\boldsymbol{\alpha}_0$ be a $d$-dimensional vector with positive
elements and let
$\boldsymbol{\alpha}$ and $\boldsymbol{\beta}$ be $d\times d$ upper
triangular
matrices such that non-zero elements are strictly positive. 
For a vector $\bfx=(x_1,\ldots,x_d)$, write 
$\bfx^\gamma=(x_1^\gamma,\ldots,x_d^\gamma)$ for $\gamma>0$. Then we say that $d$-dimensional series
$\bfX_t=(X_{1,t},\ldots,X_{d,t})',\,t\in \Z$ has GARCH$(1,1)$ structure
if it satisfies 
\[
 \bfX_t = \boldsymbol\Sigma_t \bfZ_t,
\]
where $\bfZ_t=(Z_{1,t},\ldots,Z_{d,t})'$ constitute an i.i.d. $d$-variable random vectors and the
matrix $\boldsymbol \Sigma_t$ is 
\[
 \boldsymbol \Sigma_t = diag(\sigma_{1,t},\ldots,\sigma_{d,t}).
\]
Moreover the system of volatility vector
$(\sigma_{1,t},\ldots,\sigma_{d,t})'$ is given by that of squared process
$\bfW_t=(\sigma_{1,t}^2, \ldots, \sigma_{d,t}^2)'$. Observe that
$\bfX_t=\boldsymbol \Sigma_t \bfZ_t=diag(\bfZ_t) \bfW_t^{1/2}$, so that
$\bfX_t^2 =diag(\bfZ_t^2) \bfW_t$. Then $\bfW_t$ is given by
the following auto-regressive model.  
\begin{align*}
 \bfW_t &= \boldsymbol\alpha_0 +\boldsymbol\alpha \bfX_{t-1}^2
 +\boldsymbol\beta \bfW_{t-1} \\
 &= \boldsymbol\alpha_0 +(\boldsymbol\alpha
 diag(\bfZ_t^2)+\boldsymbol\beta)\bfW_{t-1}. 
\end{align*}
Now putting $\bfB_t:=\boldsymbol\alpha_0$ and
$\bfA_t:=(\boldsymbol\alpha diag(\bfZ_t^2)+\boldsymbol\beta)$, we obtain
the SRE: 
$\bfW_t=\bfA_t \bfW_{t-1}+\bfB_t$ with $\bfA_t$ the upper triangular
with probability one. Each component of $\bfA_t$ is written as 
\[
 A_{ij,t}= \alpha_{ij} Z_{ij,t}^2+\beta_{ij},\quad i\le j\quad \text{and}\quad A_{ij,t}=0,\quad
 i>j\quad a.s. 
\]
Thus we could apply our main theorem to the squared volatility process
$\bfW_t$ and obtain the tail indices for $\bfW_t$. From this, we could
derive tail behavior of $\bfX_t$ as done in
\cite{damek:matsui:swiatkowski:2017}. 

Note that we have more
applications in GARCH type models. Indeed we are considering an applications
in BEKK-ARCH models, of which tail behavior has been investigated
with the diagonal setting (see \cite{pedersen:wintenberger:2017}). 
At there we should widen our results into the case where the
corresponding SRE takes values on whole real line. The extension is
possible if we assume certain restrictions and consider
positive and negative extremes separately. 
Since
the BEKK-ARCH model is another basic model 
in financial econometrics, the analysis with the
triangular setting would provide more flexible tools for empirical analysis.

\section{Conclusions and further comments}\label{conclusions}
\subsection{Constants}
In the bivariate case, we can obtain the exact form of constants for
regularly varying tails (see \cite{damek:matsui:swiatkowski:2017}).
The natural question is whether we can obtain the form of constants even
in the $d$-dimensional case. The answer is positive. 
We provide an example which illustrates the method of finding these constants when $d=4$. Let
\[
 \bfA=\left(\begin{array}{cccc}
       A_{11}&A_{12}&A_{13}&A_{14}\\
       0&A_{22}&A_{23}&A_{24}\\
       0&0&A_{33}&A_{34}\\
			 0&0&0&A_{44}
      \end{array}\right)
\]
and suppose that $\alpha_3<\alpha_4<\alpha_2<\alpha_1$. 

For coordinate $k=3,4$, we have 
\[
 C_k=\frac{\E\left[(A_{kk}W_k+B_k)^{\alpha_k}-(A_{kk}W_k)^{\alpha_k}\right]}{\alpha_k\E[A_{kk}^{\alpha_k}\log A_{kk}]},
\]
where $W_k$ is independent of $A_{kk}$ and $B_k$. This is the
Kesten-Goldie constant (see \cite[Theorem 4.1]{goldie:1991}). Indeed
since $W_4$ is a solution to the univariate SRE, we immediately obtain
the constant. Since the tail index of $W_3$ is equal to
$\wt\alpha_3=\alpha_3$, the constant follows by \eqref{k-tail} in Lemma
\ref{minimal_alpha}.
For the second coordinate we have an equation
$W_2\stackrel{d}=A_{22}W_2+A_{23}W_3+A_{24}W_4$. Since
$\wt\alpha_2=\alpha_3$ and $\wt\alpha_4>\alpha_3$, the term $A_{23}W_3$
dominates all others in the asymptotics. In view of \eqref{c_j} we obtain
\[
C_2=u_2\cdot C_3,
\]
where the quantity $u_2$ is given in Lemma \ref{u:finite}.

The situation seems more complicated for the first coordinate, 
because we have the condition $\wt\alpha_1=\wt\alpha_2=\alpha_3$ on the
SRE:  
$W_1\stackrel{d}=A_{11}W_1+A_{12}W_2+A_{13}W_3+A_{14}W_4$. This means
that the tail of $W_1$ comes from $W_2$ and $W_3$ both of which have
dominating tails, and we could not single out the dominant term. However, by Lemmas \ref{u:finite} and \ref{one_many} again we obtain a simple formula
\[
C_1=u_1\cdot C_3.
\]

We can write the general recursive formula for constants in any dimension:
\[
C_k=\begin{cases}
\frac{\E[(A_{kk}W_k+B_k)^{\alpha_k}-(A_{kk}W_k)^{\alpha_k}]}{\alpha_k
     \E[A_{kk}^{\alpha_k}\log A_{kk}]}\quad&{\rm if}\
     \wt\alpha_k=\alpha_k,\\ 
u_k\cdot C_{j_0}\quad&{\rm if}\ \wt\alpha_k=\alpha_{j_0}<\alpha_k.
\end{cases}
\]

Finally we notice that these $u_k$ have only closed form including
infinite sums. The exact values of $u_k$ seem to be impossible
and the only method to calculate them would be numerical approximations. The situation is
similar to the Kesten-Goldie constant (see
\cite{mikosch:samorodnitsky:tafakori}).

\subsection{Open questions}
In order to obtain the tail asymptotics of SRE such as \eqref{sre1}, 
the Kesten's theorem has been the key tool (see \cite{buraczewski:damek:mikosch:2016}). 
However, when the coefficients of SRE are upper triangular matrices as
in our case, the assumptions of the theorem are not satisfied, so that
we could not rely on the theorem. Fortunately in our setting, we can
obtain the exact tail asymptotic of each coordinate, which is $\P (W_k>x)
\sim C_k x^{-\wt \alpha_k}$. 
However, in general setting, one does not necessarily obtain such
asymptotic even in the upper triangular case. 

The example is given in \cite{damek:zienkiewicz:2017}, which we briefly
see. Let $\bfA$ be an upper triangular matrix with $A_{11}=A_{22}$
having the index $\alpha>0$. Then, depending on additional assumptions,
it can be either $\P(W_1>x)\sim Cx^{-\alpha}(\log x)^{\alpha/2}$, or
$\P(W_1>x)\sim C'x^{-\alpha}(\log x)^{\alpha}$ for some constant $C,\,C'>0$. 

There are many natural further questions to ask. What happens to the
solution if some indices $\alpha_i$ of different coordinates are equal? 
How we could find the tail asymptotics when the coefficient matrix is
neither in the Kesten's framework nor upper triangular?
Moreover, if $\bfA$ includes negative entries, could we derive the tail
asymptotics?
They are all open questions.

\appendix

\section{Negativity of top Lyapunov exponent}
\label{sec:lyapunov}
We provide the proof for negativity of $\gamma_\bfA$
in Section \ref{sec:pre_statio}. It can also be deduced from more general results of Straumann \cite{S} or Grencs\'er, Michaletzky and Orlovitz \cite{GGO} on the top Lyapunov exponent for block triangular matrices.
However, in our case there is a direct elementary approach based on equivalence
of norms and Gelfand's formula.

Since all matrix norms are equivalent we can use the norm
\[
||\bfA||_1 = \sum_{i=1}^d \sum_{j=1}^d |A_{ij}|,
\]
which is submultiplicative: $||\bfA \bfB||_1 \le
||\bfA||_1\,||\bfB||_1$. 
Then since $\bfA$ has non-negative entries, we have
$\E\|\bfA\|_1=\|\E \bfA\|_1$. Moreover,
for any $\varepsilon\in(0,1)$,
$\|\bfA\|_1^\varepsilon\le \|\bfA^\varepsilon\|_1$, where
$\bfA^\varepsilon$ denotes the matrix $\bfA$ with each entry raised to the power of  
$\varepsilon$.
We are going to apply these to the form of top Lyapunov exponent
$\gamma_{\bfA}$. 
For any $\varepsilon>0$, by Jensen's inequality we have 
\begin{align*}
 \gamma_{\bfA}
 &= \inf_{n\ge 1} (n\varepsilon)^{-1}\E\log ||\bfPi_{n}||_1^\varepsilon \\
 &\le  \inf_{n\ge 1} (n\varepsilon)^{-1}\log \E ||\bfPi_{n}||_1^\varepsilon. 
\end{align*}
Then from properties of $\|\cdot\|_1$ above, we infer that
\begin{align}
\label{ineq:stationarity}
\E\Vert\bfPi_{n}\Vert_1^\varepsilon\le\E\Vert\bfPi_{n}^\varepsilon\Vert_1=\Vert\E\bfPi_{n}^\varepsilon\Vert_1\le\Vert\E\bfPi_{n}^{(\varepsilon)}\Vert_1, 
\end{align}
where $\bfPi_{n}^{(\varepsilon)}=\bfA_0^\varepsilon\cdots\bfA_{-n+1}^\varepsilon$. The last inequality follows from the superadditivity of the function $f(x)=x^\varepsilon$. Since the matrices $\bfA_i$ are i.i.d., we have $\E\bfPi_{n}^{(\varepsilon)}=(\E\bfA^\varepsilon)^n$. Here we take the $n$-th power in terms of matrix multiplication. Hence
\[
 \gamma_{\bfA} \le  \inf_{n\ge 1} (n\varepsilon)^{-1}\log 
 \Vert(\E \bfA^{\varepsilon})^n\Vert_1 =\frac{1}{\varepsilon} \inf_{n\ge 1} \log \Vert(\E
 \bfA^{\varepsilon})^n\Vert_1^{1/n}. 
\]
From Gelfand's formula (e.g. \cite[(1.3.3)]{belitskii:lyubich:1988}),
for any matrix norm $\|\cdot\|$ we can write 
 \[
  \lim_{n\to \infty}\|(\E\bfA^\varepsilon)^n\|^{1/n} = \rho(\E\bfA^\varepsilon). 
 \]
Taking the norm $\|\cdot\|_1$, we obtain 
\[
 \gamma_{\bfA} \le \frac{1}{\varepsilon}\inf_{n\ge 1}\log\|(\E\bfA^\varepsilon)^n\|_1^{1/n}\le\frac{1}{\varepsilon}\lim_{n\to\infty}\log\|(\E\bfA^\varepsilon)^n\|_1^{1/n}= \frac{1}{\varepsilon} \log \rho(\E \bfA^{\varepsilon}).
\]
Hence it suffices to show that 
$\rho(\E\bfA^{\varepsilon})<1$. If $0<\varepsilon<\min\{\alpha_1,\ldots,\alpha_d\}$, then from condition (T-4)	
\begin{equation}
\label{epsilon_moment}
\E A_{ii}^\varepsilon<1,\quad i=1,\ldots,d. 
\end{equation}
Since the spectral radius $\rho(\E
\bfA^{\varepsilon})$ is the maximal eigenvalue of
$\E\bfA^{\varepsilon}$,
stationarity is implied by
\eqref{epsilon_moment}. 

\begin{remark}
 $(i)$ By the equivalence of matrix norms, the argument above works for any norm. In order to observe this, take a certain norm $\|\cdot\|$ and apply the inequality $\|\bfA\|\le c \|\bfA\|_1$. Then by \eqref{ineq:stationarity} we obtain
 $\E\|\bfPi_{n}\|^\varepsilon \le c^\varepsilon \|\E\bfPi_{n}^{(\varepsilon)}\|_1$. Since $\lim_{n\to\infty}c^{\varepsilon/n}=1$ for any constant $c>0$, the whole argument holds. \\
 $(ii)$ By the submultiplicativity of $\|\cdot\|_1$, it is immediate to see that
 \[
  \gamma_{\bfA} \le \frac{1}{\varepsilon} \log \|\E
 \bfA^\varepsilon\|_1.  
 \]
 However, we do not have any control on the norm $\|\E\bfA^\varepsilon\|_1$, in particular it can be greater than $1$ for any $\varepsilon$. It is essential in our situation that $\rho(\E\bfA^\varepsilon) \le \|\E\bfA^\varepsilon\|_1$ and involving Gelfand's formula is necessary to obtain the desired bound. 
\end{remark}

\section{Version of Breiman's lemma}
We provide a slightly modified version of the classical Breiman's lemma
(e.g. \cite[Lemma B.5.1]{buraczewski:damek:mikosch:2016}), 
since it is needed in the proof for \eqref{d2} of Lemma \ref{one_many}. 
In the Breiman's lemma, we usually assume regular variation for the
dominant r.v.'s of the two, which we could not apply in our situation. 
Instead, we require only an upper estimate of the tail. 
The price of weakening assumptions is also a weaker result: 
on behalf of the exact asymptotics of a product, we obtain just an
estimate from above. 
The generalization is rather standard but we include it for completeness.
\begin{lemma}
\label{weak_breiman}
Assume that $X$ and $Y$ are independent r.v.'s and for some $\alpha>0$ the following conditions hold:
\begin{align}
\label{breiman_limsup}
&\varlimsup_{x\to\infty}x^\alpha\P(Y>x)<M\quad{\rm for\ a\ constant}\ M>0 ;\\
\label{breiman_moment}
&\E X^{\alpha+\varepsilon}<\infty\quad{\rm for\ some}\ \varepsilon>0.
\end{align}
Then
\begin{equation*}
\varlimsup_{x\to\infty}x^\alpha\P(XY>x)\le M\cdot\E X^\alpha.
\end{equation*}
\end{lemma}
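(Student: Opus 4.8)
The plan is to condition on $X$ and exploit independence, then split the resulting expectation according to whether $X$ is small or large relative to $x$. Throughout I take $X\ge 0$, which is the only case needed in the applications (there $X$ is a product of non-negative matrix entries); note first that \eqref{breiman_moment} forces $\E X^{\alpha}<\infty$, since $X^{\alpha}\le 1+X^{\alpha+\varepsilon}$.

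I would set $g(t):=t^{\alpha}\,\P(Y>t)$ for $t>0$. Condition \eqref{breiman_limsup} then reads $\ell:=\varlimsup_{t\to\infty}g(t)<M$, and since $\P(Y>t)\le 1$ the function $g$ is bounded on all of $(0,\infty)$: choosing $T$ with $g(t)\le\tfrac{\ell+M}{2}$ for $t\ge T$, one has $g(t)\le t^{\alpha}\le\max\{1,T^{\alpha}\}$ for $t\le T$. Consequently $K(\delta):=\sup_{t\ge 1/\delta}g(t)$ is finite for every $\delta\in(0,1)$ and, directly from the definition of $\varlimsup$, $K(\delta)\downarrow\ell$ as $\delta\downarrow 0$. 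By independence,
\[
x^{\alpha}\,\P(XY>x)=\E\bigl[x^{\alpha}\,\P(Y>x/X\mid X)\,\mathbf{1}_{\{X>0\}}\bigr]=\E\bigl[X^{\alpha}\,g(x/X)\,\mathbf{1}_{\{X>0\}}\bigr].
\]

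Next, for fixed $\delta\in(0,1)$ I would split this expectation over the events $\{0<X\le\delta x\}$ and $\{X>\delta x\}$. On the first event $x/X\ge 1/\delta$, hence $g(x/X)\le K(\delta)$ and that part is bounded by $K(\delta)\,\E\bigl[X^{\alpha}\mathbf{1}_{\{X\le\delta x\}}\bigr]\le K(\delta)\,\E X^{\alpha}$. On the second event I use the crude bound $g(x/X)\le(x/X)^{\alpha}$, i.e.\ $X^{\alpha}g(x/X)\le x^{\alpha}$, so that part is at most $x^{\alpha}\,\P(X>\delta x)\le\delta^{-(\alpha+\varepsilon)}x^{-\varepsilon}\,\E X^{\alpha+\varepsilon}$ by Markov's inequality, which tends to $0$ as $x\to\infty$; this is precisely where the strengthened moment \eqref{breiman_moment} enters. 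Combining the two bounds and taking $\varlimsup_{x\to\infty}$ gives $\varlimsup_{x\to\infty}x^{\alpha}\P(XY>x)\le K(\delta)\,\E X^{\alpha}$, and then letting $\delta\downarrow 0$ with $K(\delta)\downarrow\ell<M$ yields the assertion.

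I do not expect a genuine obstacle here: this is a routine truncation argument. The only points requiring a little care are that $g$ is bounded on $(0,\infty)$, so that each $K(\delta)$ is finite, and that $K(\delta)\to\ell$ as $\delta\to0$ --- both immediate from $\P(Y>t)\le1$ and the definition of $\varlimsup$. (Had one assumed only $\E X^{\alpha}<\infty$, the boundary term would instead have to be controlled by a reverse-Fatou / dominated-convergence argument using the boundedness of $g$; the truncation route above is cleaner, which is the point of phrasing the hypothesis with $\alpha+\varepsilon$.)
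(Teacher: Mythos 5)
Your proof is correct, and it rests on the same core idea as the paper's: condition on $X$, bound the resulting integrand using the eventual bound on $t^{\alpha}\P(Y>t)$, and kill the region where that bound is unavailable with Markov's inequality at order $\alpha+\varepsilon$. The difference is in the decomposition. The paper splits the integral $\int_{(0,\infty)}\P(Y>x/z)\,P_X(dz)$ at a \emph{fixed} level $z=m$, and then further divides the tail $\{z>m\}$ according to whether $x/z$ exceeds a fixed $x_0$; this yields three pieces, two of which are handled by Fatou and the third by Markov, with $m\to\infty$ taken at the end. You instead truncate at the \emph{$x$-dependent} level $X=\delta x$, which collapses the argument to two pieces: on $\{X\le\delta x\}$ you have $x/X\ge 1/\delta$ so $g(x/X)\le K(\delta)$, and on $\{X>\delta x\}$ the crude bound $g\le(\cdot)^{\alpha}$ plus Markov does the job. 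Your version has two small advantages: there is no need to invoke Fatou's lemma (you avoid interchanging $\varlimsup_x$ and $\int$ by pulling the sup out as $K(\delta)$), and letting $\delta\downarrow 0$ delivers the slightly sharper constant $\ell=\varlimsup_{t\to\infty}t^{\alpha}\P(Y>t)\le M$ in place of $M$. The restriction to $X\ge 0$ is harmless and matches the paper's implicit convention (the paper also integrates $P_X$ over $(0,\infty)$).
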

\begin{proof}
The idea is the same as that in the original proof of Breiman's lemma, see e.g. \cite[Lemma B.5.1]{buraczewski:damek:mikosch:2016}. Let $P_X$ denote the law of $X$. Then for any fixed $m>0$ we can write
\begin{align*}
\P(XY>x)&=\int_{(0,\infty)}\P(Y>x/z)P_X(dz)\\
&=\Big(\int_{(0,m]}+\int_{(m,\infty),\, x/z > x_0} + \int_{(m,\infty),\, x/z \le x_0}
\Big)\,
\P(Y>x/z)P_X(dz).
\end{align*}
By \eqref{breiman_limsup}, there is $x_0$ such that $x^\alpha\P(Y>x)\le
 M$ uniformly in $x\ge x_0$.
For the first integral since $(x/z)\ge x_0$ for $x\ge mx_0$ and
 $z\in(0,m]$, by Fatou's lemma 
\begin{align}
\label{eq:bre1}
 \varlimsup_{x \to \infty} x^\alpha \int_{(0,m]} \P(Y>x/z)P_X(dz) &\le 
\int_{(0,m]}z^\alpha\varlimsup_{x\to\infty}(x/z)^\alpha\P(Y>x/z)P_X(dz) \\
 & \le M\cdot \int_{(0,m]} z^\alpha P_X(dz)
 \stackrel{m\to\infty}\rightarrow M\cdot\E X^\alpha. \nonumber
\end{align}
Since $x/z >x_0$, the same argument as above is applicable to the second
 integral: 
\begin{align}
\label{eq:bre2}
 \varlimsup_{x\to\infty}x^\alpha \int_{(m,\infty),\, x/z > x_0}\P(Y>x/z)
 P_X(dz) &\le
 \int_{z>m}z^\alpha\varlimsup_{x\to\infty}(x/z)^\alpha\P(Y>x/z)P_X(dz) \\
 &\le  M\cdot\int_{z>m}z^\alpha P_X(dz)\stackrel{m\to\infty}\rightarrow 0.\nonumber 
\end{align}
The assumption \eqref{breiman_moment} allows us to use Markov's
 inequality to estimate the last integral: 
\begin{align}
\label{eq:bre3}
 \int_{(m,\infty),x/z\le x_0}\P(Y>x/z)\P_X(dz)\le\int_{x/z\le x_0}P_X(dz)=\P(X>x/x_0)\le(x/x_0)^{-(\alpha+\varepsilon)}\E X^{\alpha+\varepsilon}
\end{align}
and therefore, the last integral is negligible as $x\to\infty$ regardless
 of $m$. Now in view of \eqref{eq:bre1}-\eqref{eq:bre3}, letting $m$ to
 infinity, we obtain the result. 
\end{proof}

\section{Acknowledgments}
The authors are grateful to Ewa Damek and Dariusz Buraczewski for valuable discussion on the subject of the paper.

{\small
}
\end{document}